\definecolor{vertfonce}{rgb}{0.20, 0.46, 0.25}
\definecolor{rougefonce}{rgb}{0.64, 0.09, 0.20}
\author[S. Fournais]{S{\o}ren Fournais}
\address[S. Fournais]{Department of Mathematics,  Aarhus University,  Ny Munkegade 118,  8000 Aarhus~C, Denmark}
\email{fournais@math.au.dk}
\author[B. Helffer]{Bernard Helffer}
\address[B. Helffer]{Nantes Universit\'e,  Laboratoire Jean Leray,  Nantes, France }
\email{Bernard.Helffer@univ-nantes.fr}
\author[A. Kachmar]{Ayman Kachmar}
\address[A. Kachmar]{Lebanese University, Department of Mathematics, Nabatiye, Lebanon}
\email{akachmar@ul.edu.lb}
\author[N. Raymond]{Nicolas Raymond}
\address[N. Raymond]{Univ Angers, CNRS, LAREMA, SFR MATHSTIC, F-49000 Angers, France}
\email{nicolas.raymond@univ-angers.fr}
\title[Attractive magnetic edge]{Effective operators\\  on an attractive magnetic edge}
\theoremstyle{plain}
\newtheorem{theorem}{Theorem}[section]
\newtheorem{lemma}[theorem]{Lemma}
\newtheorem{corollary}[theorem]{Corollary}
\newtheorem{proposition}[theorem]{Proposition}
\theoremstyle{definition}
\newtheorem{remark}[theorem]{Remark}
\newtheorem{example}[theorem]{Example}
\newcommand{\R}{\mathbb{R}}
\newcommand{\C}{\mathbb{C}}
\newcommand{\N}{\mathbb{N}}
\newcommand{\Z}{\mathbb{Z}}
\renewcommand{\Re}{\mathrm{Re}}
\renewcommand{\leq}{\leqslant}	\renewcommand{\geq}{\geqslant}
\newcommand{\dd}{\mathrm{d}}
\begin{document}
	
\begin{abstract}
The semiclassical Laplacian with discontinuous magnetic field is considered in two dimensions. The magnetic field is sign changing with exactly two distinct values and is discontinuous along a smooth closed curve, thereby producing an attractive magnetic edge.  Various accurate spectral  asymptotics are established by means of a   dimensional reduction involving a microlocal phase space  localization  allowing to deal with the discontinuity of the field.
\medskip
\end{abstract}
	
	\maketitle
	
\section{Introduction}\label{sec:intro}

\subsection{General framework}
In this article, we consider the magnetic Laplacian on the plane $\mathbb R^2$,
  \begin{equation}\label{eq:P}
\mathcal P^a_{h}:=(-ih\nabla+\mathbf A)^2=\sum_{j=1}^2 (-ih \partial_{x_j} +A_j)^2,
\end{equation}
with magnetic potential $\mathbf A:=(A_1,A_2)\in H^1_\textrm{loc}(\mathbb R^2;\mathbb R^2)$, generating  the piecewise constant magnetic field
\begin{equation}\label{eq:B-ms}
B=\mathbf 1_{\Omega_1}+a\mathbf 1_{\Omega_2}\,,
\end{equation}
where $-1\leq  a\leq a_0$ and  $a_0$ is a fixed negative constant. Here   $h>0$ is a small parameter (the semiclassical parameter). Throughout this paper, we assume that
\begin{equation}\label{eq:Gam}
\left\{\,\begin{aligned}
\Omega_1\subset\mathbb R^2\text{ is a }&\text{connected and simply connected open set,}~\Omega_2=\mathbb R^2\setminus\overline{\Omega}_1,\\
&\Gamma:=\partial\Omega_1 \text{ is a }C^\infty\text{ smooth closed curve.}\\
\end{aligned}\,\right\}
\end{equation}
and we refer to $\Gamma$ as the magnetic edge (see Fig~\ref{fig1}). We will denote the length of $\Gamma$ by $|\Gamma|=2L$.
\begin{figure*}[ht!]
\includegraphics[width=8cm]{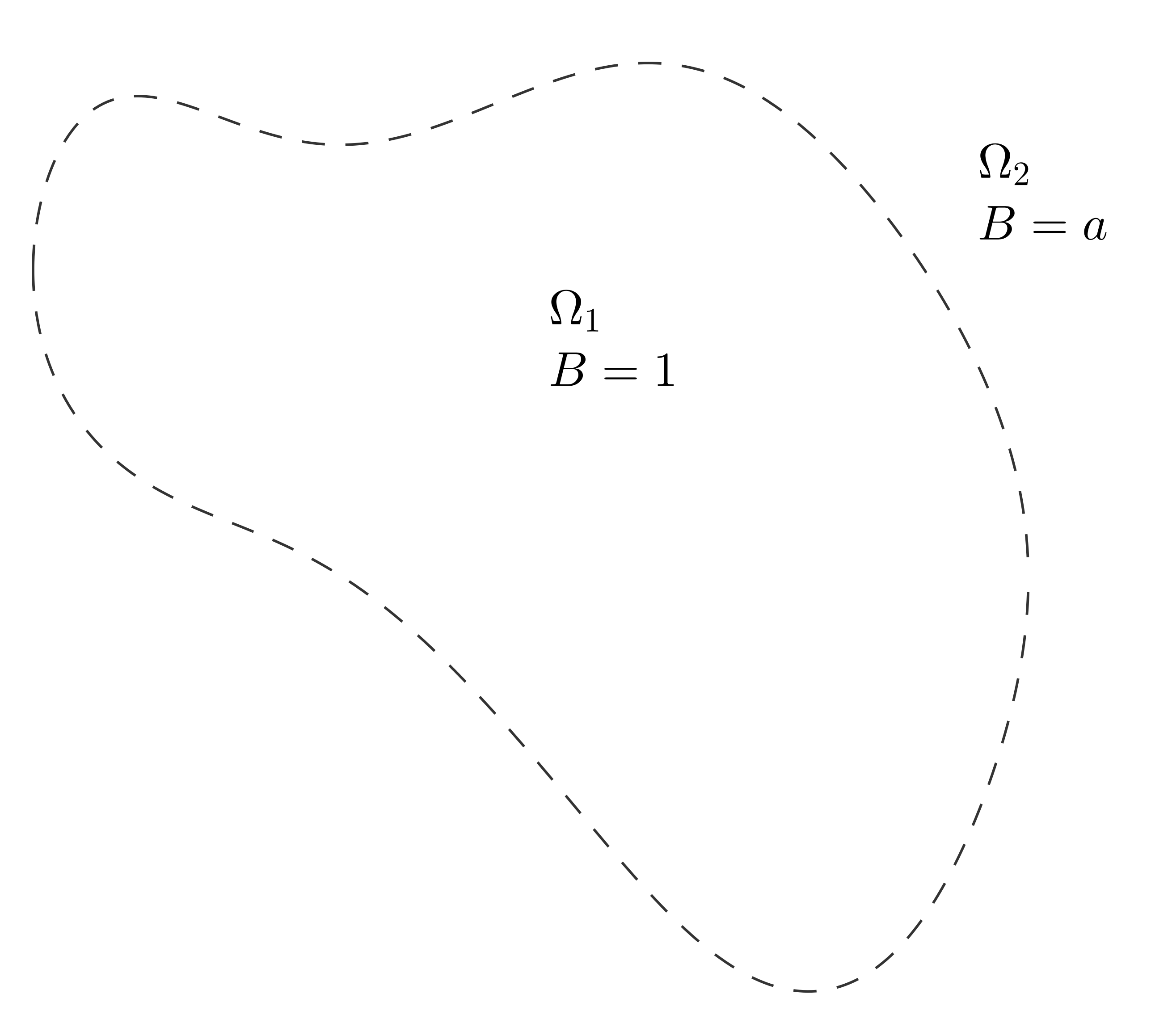}
\caption{The plane  $\mathbb R^2=\Omega_1\cup\Omega_2\cup\Gamma$ with  the  edge  $\Gamma=\partial\Omega_1$ dashed.}\label{fig1}
\end{figure*}

The operator  $\mathcal P^a_h$ is  self-adjoint in $L^2(\mathbb  R^2)$ with domain 
\begin{equation}\label{eq:DomP}
\textsf{ Dom}(\mathcal P^a_{h})=\{u\in L^2(\mathbb R^2)~:~(-ih\nabla+\mathbf A)^{j}u\in  L^2(\mathbb R^2),~j=1,2\}.
\end{equation}
Its  essential spectrum   is determined by the magnetic field at infinity (in our case it is equal to $a$). More precisely,  by Persson's lemma, we have  
\[\inf\mathrm{sp}_\textrm{ess}(\mathcal P^a_h)\geq |a| h\,.\]
The purpose of this paper is to study the spectrum of $\mathcal P^a_h$ in  the energy window
$J_h=[0, Eh]$ with $E\in(0,|a|)$ a fixed constant (thus, we analyse the spectrum below the essential spectrum) and in the semiclassical limit $h\to 0$. 
We denote by $\lambda_n(\mathcal P^a_h)$ the $n$'th eigenvalue of $\mathcal P^a_h$, and have
\begin{align}
\mathrm{sp}(\mathcal P^a_h) \cap [0, Eh] = \{ \lambda_n(\mathcal P^a_h) \}_{n=1}^N,
\end{align}
with $N=N(h)$.

Let us stress that our spectral analysis will be uniform with respect to $a\in[-1,a_0]$ and that the condition on the sign of $a$ is crucial since we will see that it implies a localization of the eigenfunctions associated with eigenvalues in $J_h$ near the edge $\Gamma$.  That is why we will say that the edge is attractive.  

\subsection{Heuristics, earlier results, and motivation}
\subsubsection{Analogy with an electric well and mini-wells}
The  problem investigated in this paper shares common features  with the semiclassical asymptotics of the Schr\"{o}dinger operator,  $-h^2\Delta+V$,  with an electric potential $V$,  in the full plane, see \cite{HeSj,HeSj5,Simon, HS6}.  In this context,  the \enquote{well} is  the set $\Gamma_V:=\{x\in\mathbb R^2\,|\,V(x)=\min_{\mathbb R^2} V\}$, which  attracts the bound states in  the limit $h\to0$.  The well is said to be \emph{non-degenerate} if  $\Gamma_V$ is a regular manifold, in which case the bound states might be localized near some points of $\Gamma_V$,  the \emph{mini-wells}. This phenomenon of   mini-wells is a manifestation of a multi-scale localization of the bound states.  Interestingly,  this phenomenon occurs also in the setting of the magnetic Laplacian,  with a Neumann boundary condition, or with a magnetic field having a step-discontinuity as in the present article.  In  particular,  if we consider the Neumann Laplacian with a constant magnetic field in a bounded, smooth domain,  the  boundary  of the domain  acts as the \enquote{well}
and the set of points of the boundary  with maximum curvature  acts as the  \enquote{mini-well} (see \cite{HM01,FH06}).

\subsubsection{Some known results}
Recently in \cite{AHK, AK20},  the operator  $\mathcal  P^a_h$ was considered in $L^2(\Omega)$ with  Dirichlet boundary condition on $\partial \Omega$,  $\Omega_1\subset \Omega$ and $\Gamma$  a  smooth curve that meets $\partial\Omega$ transversely.   The edge $\Gamma$ acts  as  the ``well'' and the  set of  points  of $\Gamma$  with maximum curvature acts  as  the ``mini-well''. 
Moreover, when the curvature has a unique non-degenerate maximum along the edge $\Gamma$,  an  accurate eigenvalue asymptotics displaying the splitting of the individual eigenvalues of $\mathcal P^a_h$ has been derived in \cite[Thm.~1.2]{AHK},   when $-1<a<0$. This result is clearly reminiscent of \cite{FH06}.

\subsubsection{Motivation}
In the present article, we propose another perspective on the problem. Our spectral analysis will be uniform in various ways. Firstly, it will allow to derive, given some $E\in(0,|a|)$,  an effective operator in the whole energy window $J_h=[0,Eh]$  with $h\in (0,h_0]$. In particular, the same strategy will provide us with Weyl estimates (estimating the number of eigenvalues in $J_h$) and the behavior of the individual eigenvalues. Secondly, it will also be uniform with respect to the parameter $a\in[-1,a_0]$. This uniformity is the key to the understanding of the transition between the regimes $a\in(-1,0)$ and $a=-1$. This is all the more motivating since the mini-well phenomenon does not occur when $a=-1$. It is indeed rather satisfactory to have a point of view encompassing quite different phenomena and showing their unity.

\subsection{The band functions}
The statement of our main results involves  a family  of 1D Schr$\ddot{\rm o}$dinger operators and their lowest eigenvalues, namely
the operators obtained when the magnetic step is along a straight line, in which case a dimensional reduction is possible.
This family has been the object of recent works (see \cite{AK20, HPRS16}). Let us briefly recall some of its basic properties.
Straightening the edge $\Gamma$ locally,   it is natural to consider the following \enquote{tangent} operator on $\R^2$ with magnetic field
\[B=\textrm{ curl\,}\mathbf A=\mathbf 1_{\mathbb R_+\times\mathbb R }+a\mathbf 1_{\mathbb R_-\times\mathbb R}\,,\]
where $ a\in[-1,a_0]$ is a fixed constant\footnote{Our investigation concerns the attractive magnetic edge,  which is the case when $a<0$.  In the opposite case,  $a \in (0,1)$,  the magnetic edge will no longer attract the bound states,  since  $\mu_a(\sigma)$ (defined in \eqref{eq:defmua}) becomes a monotone decreasing function with $\inf_{\sigma \in{\mathbb R}} \mu_a(\sigma) = a$.}. This operator is explicitely given by
\begin{equation}\label{eq.Ptgt}
\mathcal{P}^{\mathrm{tgt}}_h=h^2D_t^2+(hD_s-tb_a(t))^2\,,\quad b_a(t)=\mathbf{1}_{\mathbb R_+}(t)+a\mathbf{1}_{\mathbb R_-}(t)\,.
\end{equation}
By using a rescaling and a partial Fourier transformation along the straight edge $t=0$,  we are led to consider the analytic family of Schr\"{o}dinger operators
\begin{equation}\label{eq:ha}
	\mathfrak h_a[\sigma]=-\partial^2_t+\big(\sigma-b_a(t)t\big)^2\,,
\end{equation}
with domain
\[ B^2(\R)=\{u\in  L^2(\R) : u''\in L^2(\R)\,,t^2u\in L^2(\R)\}\,,\]
where $\sigma\in\mathbb R$ is a parameter. \\
The operator $\mathfrak h_a[\sigma]$ is self-adjoint in $L^2(\R)$ and has compact resolvent. We denote by $(\mu^{[n]}_{a}(\sigma))_{n\geq 1}$ the non-decreasing sequence of the eigenvalues (repeated according to their multiplicity) of $\mathfrak h_a[\sigma]$. For shortness, we let 
\begin{align}\label{eq:defmua}
\mu_a(\sigma)=\mu^{[1]}_{a}(\sigma) = \inf \mathrm{sp}(\mathfrak h_a[\sigma])\,.
\end{align}

By the Sturm-Liouville theory, we have the following proposition. 

\begin{proposition}\label{lem.simple}
	All the eigenvalues of $\mathfrak h_a[\sigma]$ are simple.	
	The eigenfunction associated with $\mu^{[n]}_{a}(\sigma)$ has exactly $n-1$ simple zeroes on $\R$.
\end{proposition}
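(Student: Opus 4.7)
The plan is the following. First, although $b_a$ has a jump discontinuity at $t=0$, the function $t\mapsto t\,b_a(t)$ is continuous (both one-sided limits vanish), hence the potential
\[
V_{\sigma,a}(t) := \bigl(\sigma - t\,b_a(t)\bigr)^2
\]
appearing in $\mathfrak h_a[\sigma]=-\partial_t^2+V_{\sigma,a}(t)$ belongs to $C^0(\mathbb R)$. Moreover $V_{\sigma,a}(t)\to+\infty$ as $|t|\to\infty$, with quadratic growth (using $a<0$ for the left tail and $b_a=1$ for the right one). Hence $\mathfrak h_a[\sigma]$ is a one-dimensional Schrödinger operator with continuous confining potential on $\mathbb R$, which is exactly the setting of classical Sturm--Liouville theory: every $L^2$-eigenfunction is automatically of class $C^2(\mathbb R)$, and the initial-value problem for the eigenvalue equation admits unique solutions through every point, including $t=0$.

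Simplicity then follows from the usual Wronskian argument. If $u$ and $v$ are two $L^2$-eigenfunctions associated with the same eigenvalue $\mu$, their Wronskian $W:=uv'-u'v$ satisfies $W'\equiv 0$ on $\mathbb R$, hence is constant; since $u$, $v$ and their derivatives decay at infinity (by a standard Agmon estimate based on $V_{\sigma,a}\to+\infty$), one gets $W\equiv 0$, and Cauchy--Lipschitz then forces $u$ and $v$ to be linearly dependent. Thus every eigenspace is one-dimensional.

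For the number of zeros I would invoke the Sturm oscillation theorem for one-dimensional Schrödinger operators with continuous confining potential on $\mathbb R$, which asserts precisely that the $n$-th eigenfunction has $n-1$ zeros; simplicity of these zeros is immediate from Cauchy--Lipschitz, since the simultaneous vanishing of $u$ and $u'$ at any point would give $u\equiv 0$. A self-contained derivation is obtained by truncating to a large interval $[-R,R]$ with Dirichlet conditions, applying the classical Sturm oscillation theorem there, and letting $R\to\infty$: min-max convergence of the $n$-th Dirichlet eigenvalue to $\mu_a^{[n]}(\sigma)$ together with $C^1_{\mathrm{loc}}$-convergence of the corresponding normalized eigenfunctions preserves the count, no zero escaping to infinity thanks to the exponential decay. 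The only non-textbook point in all this is the mild irregularity of $b_a$ at the origin, and it is neutralized once and for all by the opening observation that $V_{\sigma,a}\in C^0(\mathbb R)$; after that, the entire proof is standard.
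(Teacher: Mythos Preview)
Your proof is correct and follows exactly the approach the paper indicates: the paper simply writes ``By the Sturm--Liouville theory, we have the following proposition'' and gives no further details. Your write-up makes explicit the one point that deserves mention---namely that although $b_a$ itself is discontinuous, the potential $V_{\sigma,a}(t)=(\sigma-t\,b_a(t))^2$ is continuous on $\mathbb R$ because $t\,b_a(t)$ is---after which the Wronskian argument for simplicity and the Sturm oscillation theorem for the zero count apply verbatim. This is precisely what the paper is invoking, so your proposal is a faithful and complete expansion of its one-line justification.
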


The functions $\mu^{[n]}_{a}(\sigma)$,  are called the band functions.  When $a=1$,  we are reduced to the harmonic oscillator and $\mu^{[n]}_a(\sigma)=2n-1$.  When $-1\leq a<1$,  the functions $\mu^{[n]}_a(\sigma)$ are no more constant functions, see \cite{HPRS16}.  The lowest band function,   $\mu_{a}(\sigma)$ is studied in \cite{AK20}.

\begin{proposition}[\cite{AK20, HPRS16}]\label{prop.mu-a}
	For all $n\geq 1$,  the function $\mu^{[n]}_{a}$ is analytic as a function of $\sigma$.   Moreover,  the lowest band function satisfies
	\begin{equation}\label{eq:limit-mu-a}
		\lim\limits_{\sigma\to-\infty}\mu_a(\sigma)=+\infty,\quad  \lim\limits_{\sigma\to+\infty}\mu_a(\sigma)=|a|\,,
	\end{equation}
	and $\mu_a$ has a unique critical point, which is a non-degenerate minimum $\beta_a\in(0,|a|)$, attained at $\sigma(a)>0$.
\end{proposition}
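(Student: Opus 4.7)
The plan is to combine Kato's analytic perturbation theory, a direct inspection of the potential $V_\sigma(t)=(\sigma-b_a(t)t)^2$, and a Feynman--Hellmann analysis of the critical points.

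Since the differential expression in \eqref{eq:ha} depends polynomially on $\sigma$ and the form domain $B^2(\R)$ is $\sigma$-independent, $\sigma\mapsto\mathfrak h_a[\sigma]$ is a self-adjoint holomorphic family of type~(A). The resolvent is compact and every eigenvalue is simple by Proposition~\ref{lem.simple}, so Kato's theory yields that each $\mu_a^{[n]}$ is real-analytic on $\R$. For the limits, I would use that $V_\sigma$ equals $(t-\sigma)^2$ on $\{t>0\}$ and $a^2(t-\sigma/a)^2$ on $\{t<0\}$. When $\sigma\leq 0$ and $a<0$, both quadratic minima lie on the ``wrong'' half line and a direct inspection gives $V_\sigma\geq \sigma^2$, hence $\mu_a(\sigma)\geq\sigma^2\to+\infty$. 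When $\sigma\to+\infty$, $V_\sigma$ develops two wells at $t=\sigma$ (harmonic frequency $1$) and $t=\sigma/a$ (harmonic frequency $|a|\leq 1$), separated by a barrier of height $\sigma^2$ at $t=0$; a trial function built from the translated ground state of $-\partial_t^2+a^2(t-\sigma/a)^2$ shows $\limsup_{\sigma\to+\infty}\mu_a(\sigma)\leq|a|$, and an IMS partition of unity cutting the real line near the top of the barrier decouples the two wells, compares each piece with the corresponding harmonic oscillator, and delivers the matching lower bound $\liminf_{\sigma\to+\infty}\mu_a(\sigma)\geq\min(1,|a|)=|a|$.

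For the critical points, the Feynman--Hellmann formula applied to the $L^2$-normalized ground state $u_\sigma$ yields
\begin{equation*}
\mu_a'(\sigma)=2\int_{\R}(\sigma-b_a(t)t)\,u_\sigma(t)^2\,\dd t.
\end{equation*}
The same sign argument as above shows the integrand is non-positive, and not identically zero, for every $\sigma\leq 0$, hence $\mu_a'<0$ on $(-\infty,0]$ and any critical point must lie in $\{\sigma>0\}$. A trial function blending the two harmonic-oscillator ground states (in the spirit of \cite{AK20, HPRS16}) produces some $\sigma_0>0$ with $\mu_a(\sigma_0)<|a|$, so continuity and the asymptotic $\mu_a\to|a|$ at $+\infty$ force the infimum $\beta_a:=\inf_\R\mu_a$ to belong to $(0,|a|)$ and to be attained at some $\sigma(a)>0$ (strict positivity of $\beta_a$ is immediate from $V_\sigma\geq 0$ and $u_\sigma\not\equiv 0$). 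Uniqueness and non-degeneracy would then be derived through a Virial-type identity at a critical point: testing the eigenvalue equation against suitable combinations of $u_\sigma$, $tu_\sigma$, and $u_\sigma'$ and integrating by parts on $\R_+$ and $\R_-$ separately, the boundary terms at $t=0$ generated by the jump of $b_a$ are matched via the transmission conditions (continuity of $u_\sigma$ and $u_\sigma'$ at $0$), which reduces $\mu_a''(\sigma_*)$ to a manifestly positive expression involving $u_{\sigma_*}(0)$, $u_{\sigma_*}'(0)$, and $|a|-\mu_a(\sigma_*)>0$.

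The main obstacle is precisely this last step. The discontinuity of $b_a$ rules out standard convexity or symmetrization arguments, and every integration by parts carries boundary contributions at $t=0$ whose cancellation requires a careful algebraic bookkeeping via the transmission conditions. Once the structural Virial identity is in place, uniformity in $a\in[-1,a_0]$ follows essentially for free.
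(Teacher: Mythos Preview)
The paper does not contain its own proof of this proposition: it is stated with a citation to \cite{AK20, HPRS16} and no argument is reproduced. So there is nothing to compare your proposal to line by line; what I can do is assess whether your sketch is a faithful outline of what those references actually do.

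On the parts you treat in detail---analyticity via Kato's type~(A) theory, the bound $V_\sigma\geq\sigma^2$ for $\sigma\leq 0$, the Feynman--Hellmann formula showing $\mu_a'(\sigma)<0$ on $(-\infty,0]$, and the double-well picture as $\sigma\to+\infty$---your arguments are correct and are essentially the ones used in \cite{HPRS16, AK20}. The IMS decoupling for the lower bound $\liminf_{\sigma\to+\infty}\mu_a(\sigma)\geq|a|$ is the standard approach and works as you describe.

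You are right to flag the uniqueness and non-degeneracy of the minimum as the genuine obstacle, and your proposal there is honest but incomplete: you describe a strategy (virial-type identities, boundary matching at $t=0$) without carrying it out. In \cite{AK20} this step is indeed done through explicit integration-by-parts identities on $\R_\pm$ with transmission conditions at $0$, yielding formulas for the moments $M_k(a)$ (some of which are recalled in Section~\ref{sec:uf} of the present paper) and an expression for $\mu_a''$ at a critical point that can be shown to be positive. Your description captures the spirit of that computation, but as written it is not yet a proof: the ``manifestly positive expression'' has to be exhibited, and the algebra is not short. If you want a self-contained argument you will need to reproduce those identities; otherwise, citing \cite{AK20} as the paper does is the appropriate move.
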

In light of Proposition~\ref{prop.mu-a},  we write,  for $E\in(0,|a|)$,
\begin{equation}\label{eq:xi-pm}
	\mu^{-1}_a([\beta_a,E])=\big[\sigma_-(a,E),\sigma_+(a,E)\big]\,,
\end{equation}
where $-\infty<\sigma_-(a,E)<\sigma(a)<\sigma_+(a,E)<+\infty$.

\subsection{Main results}
Our analysis will reveal that the semiclassical spectral asymptotics of $\mathcal{P}^a_h$ in the interval $[0,Eh]$ is governed by that of an effective operator acting on the edge $\Gamma$.   In particular,  we obtain accurate asymptotics for the low-lying eigenvalues of $\mathcal P^a_h$ highlighting  a significant difference between the cases where $-1<a<0$ and $a=-1$.

\begin{theorem}[Case $-1<a<0$]\label{corol.a>-1}
Assume that $k$ has a unique maximum,  which is non-degenerate:
\[ k_{max}:=\max_\Gamma k=k(s_{\max})\,,\quad k''(s_{\max})<0\,.\]
For all $a\in(-1,0)$,  there exists $C(a)>0$ such  that,  for all $n\geq 1$,
\[\lambda_n(\mathcal{P}^a_h)=
\beta_a h-C(a)k_{\max}h^{\frac32}+(n-\frac 12)h^{\frac74}\sqrt{-C(a)\mu''_a(\sigma(a))k''(s_{\max})}+o_n(h^{\frac74})\,.
\]	
\end{theorem}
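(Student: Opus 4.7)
The plan is to deduce the theorem from the spectral analysis of a one-dimensional effective operator on $L^2(\Gamma)$, produced by the Grushin/Feshbach reduction that I expect the paper to have established earlier via the microlocal phase-space localization announced in the abstract. Concretely, after introducing tubular coordinates $(s,t)$ near $\Gamma$ with $s\in\mathbb{R}/2L\mathbb{Z}$ the arclength and $t$ the signed normal distance, one rescales $t=\sqrt{h}\,\tau$ so that the operator becomes $h\,\mathfrak{h}_a[\sqrt{h}D_s]$ plus a curvature correction produced by the Jacobian $(1-tk(s))$ and the induced metric. Projecting onto the lowest band of $\mathfrak{h}_a[\sigma]$ and computing the curvature contribution as an expectation value in the normalized ground state $u_a(\cdot,\sigma(a))$ at the bottom of $\mu_a$ yields an effective operator $H_{\mathrm{eff}}$ on $L^2(\Gamma)$ whose Weyl symbol has the form
\[
p_{\mathrm{eff}}(s,\sigma) = h\,\mu_a(\sigma) \,-\, C(a)\,k(s)\,h^{3/2} \,+\, O(h^2),
\]
for an explicit positive constant $C(a)$ depending only on $u_a(\cdot,\sigma(a))$.

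Granted the effective operator, the formula follows from a textbook harmonic approximation. By Proposition~\ref{prop.mu-a} and the hypothesis on $k$, the symbol $p_{\mathrm{eff}}$ attains a unique non-degenerate minimum equal to $h\beta_a-C(a)k_{\max}h^{3/2}$ at $(s,\sigma)=(s_{\max},\sigma(a))$. After the gauge transformation $u(s)\mapsto e^{-i\sigma(a)s/\sqrt{h}}u(s)$ recentering the $\sigma$-variable, Taylor expansion of $\mu_a$ at $\sigma(a)$ and of $k$ at $s_{\max}$, together with the Hermite rescaling $s-s_{\max}=h^{1/8}y$, identifies the leading part of $H_{\mathrm{eff}}$ near the minimum with the harmonic oscillator
\[
h\beta_a \,-\, C(a)k_{\max}h^{3/2} \,+\, \tfrac{h^2}{2}\mu_a''(\sigma(a))\,D_s^2 \,-\, \tfrac{C(a)h^{3/2}}{2}\,k''(s_{\max})(s-s_{\max})^2.
\]
Its $n$-th eigenvalue equals $h\beta_a-C(a)k_{\max}h^{3/2}+(n-\tfrac12)\sqrt{-C(a)\mu_a''(\sigma(a))k''(s_{\max})}\,h^{7/4}$, since $\sqrt{h^2\cdot h^{3/2}}=h^{7/4}$ produces the correct power.

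The theorem is then obtained by matching upper and lower bounds for $\lambda_n(\mathcal{P}^a_h)$. The upper bound comes from quasimodes of the form $\chi(s-s_{\max})\chi(t)\,u_a(t/\sqrt{h},\sigma(a))\,e^{i\sigma(a)s/\sqrt{h}}\,\varphi_n\bigl((s-s_{\max})/h^{1/8}\bigr)$ with $\varphi_n$ the Hermite eigenfunctions, plugged into the min-max principle. The lower bound relies on the $H_{\mathrm{eff}}$ reduction combined with Agmon-type decay both away from $\Gamma$ (using \eqref{eq:limit-mu-a} and the fact that spectral values lie below $|a|h$) and away from $s_{\max}$ on $\Gamma$ (using $k(s)<k_{\max}$ off $s_{\max}$), in the spirit of \cite{HeSj,FH06}. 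The genuine difficulty is not this last harmonic-approximation step but the \emph{construction} of $H_{\mathrm{eff}}$ itself: because $b_a$ has a step discontinuity at $t=0$, the normal profiles $u_a(\cdot,\sigma)$ are only $H^1$ across the jump, which prevents a direct application of semiclassical pseudodifferential calculus and forces the Grushin reduction to be carried out microlocally, using a phase-space cut-off that freezes $\sigma$ in the classically accessible region $[\sigma_-(a,E),\sigma_+(a,E)]$ of \eqref{eq:xi-pm}.
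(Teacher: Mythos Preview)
Your proposal is essentially the paper's own route: reduce to an $\hbar$-pseudodifferential operator on the edge with principal symbol $\mu_a(\sigma)$ and subprincipal term $-C(a)k(s)\hbar$ (the paper's $p^{\mathrm{eff}}_\hbar$ in \eqref{eq.exp.eff-symb}, with $C(a)=-M_3(a)$ by Lemma~\ref{lem.q1=Ck}), then apply the harmonic approximation at $(s_{\max},\sigma(a))$ exactly as in Proposition~\ref{prop:finala>}.

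One correction to your closing remark: the normal profiles $u_a(\cdot,\sigma)$ are \emph{not} merely $H^1$ across $t=0$. Since $b_a(t)t$ is continuous, the potential $(\sigma-b_a(t)t)^2$ is continuous, so the eigenfunctions lie in $B^2(\R)\subset H^2$ (indeed $C^2$); this is precisely the domain the paper uses throughout. The reason the paper introduces the momentum cutoff $\chi_1$ (Section~\ref{sec.chi1}) is not a regularity obstruction from the discontinuity but the need for \emph{bounded} operator-valued symbols: $\mathfrak{h}_a[\sigma]$ is unbounded in $\mathscr{L}(B^2,L^2)$ as $\sigma\to-\infty$ (cf.~\eqref{eq:limit-mu-a}), and the Grushin inverse $\mathscr{Q}_{0,z}$ must live in a fixed symbol class for the parametrix construction (Proposition~\ref{prop.param}) to close. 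The microlocalization result (Proposition~\ref{prop:mlc}) then justifies replacing $\mathfrak{N}_{\hbar,\theta}$ by its truncated version $\mathfrak{N}^c_{\hbar,\theta}$.
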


\begin{remark}
~
\begin{enumerate}[\rm i)]
\item Theorem~\ref{corol.a>-1}  recovers the  asymptotics obtained in \cite{AHK}.
The constant is given by $C(a)= -M_3(a)>0$, with $M_3(a)$ defined in \eqref{eq:moments} and calculated in \eqref{eq:m3*}.
\item The  asymptotics in Theorem~\ref{corol.a>-1}    is consistent with the phenomenon observed in surface superconductivity (see \cite{FH06} and references therein) and   the semiclassical analysis for the Schr\"odinger operator with a degenerate well in \cite{HeSj5}.  In this comparison,  the well corresponds here to $\Gamma$ and the mini-wells correspond to the points of maximal curvature. 
\item Actually, the proof of Theorem~\ref{corol.a>-1} provides us with a uniform description of the spectrum in $[0,Eh]$ and could also help determining the behavior of the eigenvalues close to $Eh$ when $E$ is non-critical for $\mu_a$, \emph{i.e.}, when $E\neq \beta_a$. In the context of the Robin Laplacian, such considerations are the object of the ongoing work \cite{FLTRVN22}. Note also that there are some results 
high up in the spectrum in the recent work \cite{GV},  where Dirichlet conditions are considered.
\item It might happen that $k$ does not have a unique minimum and even that $\Gamma$ has some symmetry properties. In this case,  tunneling occurs and the eigenvalue splitting is  exponentially small (see \cite{FHK22}).   The proof is similar   to  the case of the Laplacian  with a  constant magnetic field and Neumann boundary condition in a symmetric domain \cite{BHR22}.
 \end{enumerate}
\end{remark}
When $a=-1$, we will prove that $C(a)=0$ and thus the second and third terms in  the  asymptotics formally vanish.   We still get  accurate estimates for the low-lying eigenvalues
of  $\mathcal P^a_h$ when $a=-1$,  which  involves an operator on the edge $\Gamma\simeq [-L,L)$, whose half-length is denoted by $L$.
\begin{theorem}[Case $a=-1$]\label{corol.a=-1}  There exists   $C_0<0$ such that,  for every $n\in\N$,  we have as $h\to0$,
\[\lambda_n(\mathcal{P}_h^{ \{a=-1\}})=
\beta_{-1} h+h^2\gamma_n(h)+o_n(h^2)\,,\]
 where $\gamma_n(h)$ is the non-decreasing sequence of the eigenvalues of the differential operator
\[\frac{\mu''_{-1}(\sigma(-1))}{2}\left(D_s+\alpha_h\right)^2+C_0k(s)^2\,,\quad\mbox{ with } D_s=-i\partial_s\,,\]	
acting on $[-L,L)$ with periodic boundary conditions,  and
\begin{equation}\label{defalphah}
\alpha_h:=  \frac{|\Omega_1|}{2Lh}  - \frac{\sigma(-1)}{\sqrt{h}}\,.
\end{equation}
Here $|\Omega_1|$ is the area  of  $\Omega_1$.
\end{theorem}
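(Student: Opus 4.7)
The plan is to reduce, via a magnetic gauge choice, tubular coordinates and a rescaling, to an effective one-dimensional operator on $\Gamma$, and then to exploit the extra symmetry $t\leftrightarrow -t$ specific to $a=-1$ in order to eliminate the $h^{3/2}$ correction and push the refined asymptotics to order $h^2$. First, I introduce tubular coordinates $(s,t)\in\mathbb R/(2L\mathbb Z)\times\mathbb R$ around $\Gamma$, with $s$ arclength and $t$ signed distance (positive in $\Omega_1$); the Euclidean metric becomes $(1-tk(s))^2\,ds^2+dt^2$. A gauge adapted to these coordinates can be chosen so that $\mathbf A=(\tilde A_s,0)$ with $\tilde A_s(s,t)\approx -tb_{-1}(t)+\tfrac12 k(s)t^2 b_{-1}(t)+\alpha_0$ near $\Gamma$, where the constant $\alpha_0$ is forced by the global Aharonov--Bohm condition $\int_0^{2L}\tilde A_s(s,0)\,ds=\int_{\Omega_1}B\,dx=|\Omega_1|$ (since $B\equiv 1$ on $\Omega_1$). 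A standard Agmon-type estimate, using the confining behaviour of $\mu_{-1}$ at $\pm\infty$ from \eqref{eq:limit-mu-a}, localizes the eigenfunctions associated with eigenvalues in $[0,Eh]$ ($E<1$) in a tube $|t|\lesssim\sqrt h\,|\log h|$ around $\Gamma$.

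Next, setting $t=\sqrt h\,\tau$ and conjugating away the factor coming from the curved metric, the operator takes the form $h\,\mathfrak h_{-1}[\sigma]+h^{3/2}\mathcal R_1(\tau,s)+h^2\mathcal R_2(\tau,s)+O(h^{5/2})$, where $\sigma$ is now the $h$-pseudodifferential symbol corresponding to $\sqrt h\,D_s+\alpha_0/\sqrt h$ and the $\mathcal R_j$ are polynomial in $\tau$ with $k(s)$-dependent coefficients produced by Taylor expansion in $t$ of both the Jacobian and the gauge correction. A microlocal phase-space cut-off in $\sigma$ near the unique minimum $\sigma(-1)$ of $\mu_{-1}$ (Proposition~\ref{prop.mu-a}), justified by an Agmon-type estimate in the frequency variable, allows one to replace $\mu_{-1}(\sigma)$ by $\beta_{-1}+\tfrac12\mu''_{-1}(\sigma(-1))(\sigma-\sigma(-1))^2$ and to work in the one-dimensional spectral subspace of $\mathfrak h_{-1}[\sigma(-1)]$ associated with its ground state $u$.

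A Grushin/Feshbach reduction onto the projector on $u$ then yields an effective operator on $L^2([-L,L);ds)$ of the form
\[
\beta_{-1}h+h^{3/2}M_3(-1)k(s)+h^2\Bigl[\tfrac{\mu''_{-1}(\sigma(-1))}{2}(D_s+\alpha_h)^2+C_0 k(s)^2\Bigr]+o(h^2),
\]
with $\alpha_h$ as in \eqref{defalphah} obtained by combining the gauge shift $\alpha_0/\sqrt h=|\Omega_1|/(2Lh)$ with the recentring $-\sigma(-1)/\sqrt h$ around the critical point. The decisive feature at $a=-1$ is that $b_{-1}(\tau)\tau=|\tau|$ is \emph{even} in $\tau$, so by simplicity of the spectrum (Proposition~\ref{lem.simple}) the ground state $u=u_{-1}(\cdot;\sigma(-1))$ is an even function. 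Consequently all odd $\tau$-moments of $u^2$ vanish, $M_3(-1)=0$, and the $h^{3/2}$ term drops out; the constant $C_0$ is the explicit integral of $u^2$ against the $t^2$-curvature corrections coming from the Jacobian and from the $\tfrac12 k(s)t^2$ term in $\tilde A_s$, which turns out to be negative.

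The eigenvalue asymptotics are then concluded by the standard two-sided argument: quasi-modes $\chi(t/\sqrt h)u(t/\sqrt h)\psi_n(s)$ built from eigenfunctions $\psi_n$ of the effective operator give the upper bound via the min-max principle, while the Grushin reduction combined with the microlocal localization provides the matching lower bound, uniformly in $n$ as long as $\gamma_n(h)$ remains bounded. The main obstacle is the gauge bookkeeping on the closed curve $\Gamma$: unlike in the straight-edge model, the constant $\alpha_0$ cannot be removed by a global gauge transformation, and its precise value dictated by the flux $|\Omega_1|$, combined with the rescaling shift $-\sigma(-1)/\sqrt h$, produces the non-trivial unbounded parameter $\alpha_h$; the spectrum of the effective operator must consequently be understood modulo the lattice $\tfrac{\pi}{L}\mathbb Z$, and the microlocal reduction in $\sigma$ must be justified uniformly over the whole window $[0,Eh]$, which is where the phase-space localization mentioned in the abstract plays its key role.
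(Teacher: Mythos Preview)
Your overall strategy coincides with the paper's: tubular coordinates, Agmon localization near $\Gamma$, rescaling $t=\sqrt h\,\tau$, microlocalization in $\sigma$ near $\sigma(-1)$, and a Grushin/Feshbach reduction to an effective operator on the edge; the flux bookkeeping producing $\alpha_h$ and the parity argument giving $M_3(-1)=0$ are exactly what the paper does (Propositions~\ref{prop:BHR2.7-bis}, \ref{prop.coincide}, \ref{prop.Nc-eff}, and Lemma~\ref{lem.q1=Ck}).

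There is, however, a genuine gap in your treatment of the constant $C_0$. You describe it as ``the explicit integral of $u^2$ against the $t^2$-curvature corrections'' and say it ``turns out to be negative''. This misses a crucial contribution: in the Grushin reduction the second-order effective coefficient is not just the diagonal term $-\langle n_2 u,u\rangle$, but
\[
q_{2,\beta_a}^\pm(s,\sigma_a)=\big\langle (\overset{\circ}{n}_0-\beta_a)^{-1}\Pi^\perp n_1 u,\,n_1 u\big\rangle-\langle n_2 u,u\rangle
\]
(see \eqref{eq:param2}). Even though $\langle n_1 u,u\rangle=0$ by parity, the function $n_1 u$ itself is nonzero (it is odd), so the resolvent term contributes. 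In the paper's Lemma~\ref{lem.a=-1} this gives $C_0=-\tfrac14+G$ with $G=2\langle v,w\rangle-\int(\cdots)|f_0|^2$, where $v$ solves an auxiliary Dirichlet problem for the de\,Gennes operator. The negativity of $C_0$ is then not a parity or sign argument: the paper computes $G$ explicitly by finding $v$ in closed form $v=pf_0+qf_0'$ with polynomial $p,q$, reduces everything to the moments $M_k$ of Section~\ref{sec.dG}, obtains $G=-\tfrac{21}{8}-\tfrac{9}{8}\Theta_0^2-\tfrac{57}{4}\xi_0 M_3$, and finally invokes the numerical bounds $\xi_0\approx\sqrt{0.59}$, $M_3\approx -C_1/2$ with $0.858\le 3C_1\le 0.888$ to conclude $G<0$. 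Your sketch provides neither the resolvent term nor any mechanism for the sign, and this is the one point in the proof that is not soft. A secondary omission is that the clean form of the effective operator also requires $\partial_\sigma q_1^\pm(s,\sigma(-1))=0$ (otherwise a cross term survives in \eqref{eq.bhbardiff}); this again follows from the parity of $u$, but it is a separate check from $M_3(-1)=0$.
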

 The  quantity $\alpha_h$ in \eqref{defalphah} involves the circulation  of the magnetic potential along $\Gamma$. In  fact,   by  Stokes' Theorem,  the circulation satisfies 
\[\frac1{|\Gamma|}\int_\Gamma \mathbf A\cdot \tau \,\dd s(x)=\frac1{|\Gamma|}\int_{\Omega_1}  {\rm curl\,}\mathbf A \,\dd x= \frac{|\Omega_1|}{2L}.\]

 At the first glance,  Theorems~\ref{corol.a>-1} and \ref{corol.a=-1} seem independent. However,  they both  result  from  the analysis of  the effective operator of  $\mathcal P_h^a$ (see Theorem~\ref{thm.main} below),  which provides  us with  an accurate spectral description for $-1\leq a<0$.    

This effective operator can be described as an $\hbar$-pseudodifferential operator on  $\R$ with a $2L$-periodic symbol with respect to the space variable, and acting on $2L$-periodic functions.    Here and along the whole paper the parameter
\begin{equation}
\hbar:=h^{1/2}
\end{equation}
is called the  \emph{effective} semiclassical parameter.  Let us describe the shape of our effective operator.  For a given symbol  $p_\hbar(s,\sigma)\in S_{\R^2}(1)$\,\footnote{that is a smooth bounded function on $\R^2$ such that its derivatives at any order are also bounded, uniformly in $\hbar\in(0,1]$.},  we consider  the Weyl quantization, \emph{i.e.}, the operator defined by 
\begin{equation}\label{eq.ph-w-intro}
({\rm Op}_\hbar^w(p_\hbar) u)(s)=\frac{1}{2\pi\hbar} \int_{\R^2} e^{i(s-\tilde s)\cdot \sigma/\hbar} p_\hbar \left(\frac{s+\tilde s}{2}, \sigma\right)u(\tilde s)   \mathrm{d}\tilde s \mathrm{d}\sigma\,.
\end{equation}
For an introduction to pseudo-differential operators, the reader is referred for instance to \cite{Zworski}, where rigorous definitions are given and several fundamental properties are established. These operators being well defined on $\mathcal S(\mathbb R)$, they can be extended by duality as operators on $\mathcal S'(\mathbb R)$. We now underline that, if $p_{\hbar}(s+2L,\sigma)=p_{\hbar}(s,\sigma)$, then ${\rm Op}_\hbar^w(p_\hbar)$ transforms all the $2L$-periodic distributions into $2L$-periodic distributions.  In fact,  ${\rm Op}_\hbar^w(p_\hbar)$ also preserves the space of $2L$-periodic functions that are in $L^2_{\rm{loc}}$, denoted by $L^2_{2L}(\R)$  (see Section~\ref{s-sec.p}).

Such an induced operator will give us our effective operator and we will call it \emph{a pseudodifferential operator on the edge}, $s$ representing the coordinate on $\Gamma$ (parametrised by arc-length).   

 The  main result in this article is the  following.
\begin{theorem}[Spectral reduction to the edge]\label{thm.main}~\\ 
There exists a self-adjoint $\hbar$-pseudodifferential operator (with symbol $p_\hbar^{\mathrm{eff}}\in S_{\R^2}(1)$) on the edge,  whose principal symbol coincides with $\mu_a$ below $E$,  such that the spectrum of $\mathrm{Op}^w_\hbar (p^{\mathrm{eff}}_\hbar)$ is discrete in $[0,E]$ for $\hbar$ in  some interval $(0,\hbar_0]$.  
 
Moreover,  for all  $n\in\N$ such that $\lambda_n(\mathcal{P}_h^a)\in J_h=[0,Eh]$,  we have as $h\to0$,
\[\lambda_n(\mathcal{P}^a_h)= h\lambda_n(\mathrm{Op}^w_\hbar (p^{\mathrm{eff}}_\hbar))+o(h^{2})\,,\]
uniformly with respect to $a\in[-1,a_0]$, where $-1<a_0<0$.  Here $\lambda_n(\mathrm{Op}^w_\hbar (p^{\mathrm{eff}}_\hbar))$ denotes the $n$-th eigenvalue of $\mathrm{Op}^w_\hbar (p^{\mathrm{eff}}_\hbar)$.
\end{theorem}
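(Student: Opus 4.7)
The plan is to reduce $\mathcal{P}^a_h$ to a one-dimensional effective pseudodifferential operator on the closed edge $\Gamma$ by a Born--Oppenheimer type dimensional reduction combined with a microlocal phase-space localization in the momentum dual to the arclength $s$. The guiding picture is that any eigenfunction with eigenvalue in $[0,Eh]$ should factor, up to $o(h)$ errors, as a product of a function of $s$ and the ground state of the fiber operator $\mathfrak{h}_a[\sigma]$ of \eqref{eq:ha}; the effective Hamiltonian governing the $s$-variable will then have principal symbol $\mu_a(\sigma)$, and discreteness of its spectrum in $[0,E]$ will come from the confinement of $\mu_a^{-1}([0,E])$ in a compact $\sigma$-interval by Proposition~\ref{prop.mu-a}.

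First I would prove Agmon-type decay estimates showing that any eigenpair $(\lambda,\psi)$ of $\mathcal{P}^a_h$ with $\lambda\leq Eh$ has $\psi$ exponentially localized in an $\hbar$-neighborhood of $\Gamma$; the weight is controlled away from $\Gamma$ by the bulk lower bound $\mathcal{P}^a_h \geq |B|h$ in regions of constant field, giving $h>Eh$ in $\Omega_1$ and $|a|h>Eh$ in $\Omega_2$. After truncating to a tubular neighborhood and switching to tubular coordinates $(s,t)\in(\R/2L\Z)\times(-t_0,t_0)$, with the rescaling $t=\hbar\tau$ (where $\hbar=\sqrt{h}$) and using the homogeneity $b_a(\hbar\tau)=b_a(\tau)$, the operator becomes
\[h^{-1}\mathcal{P}^a_h = D_\tau^2 + \bigl(\hbar D_s - \tau b_a(\tau)\bigr)^2 + \hbar\, R_\hbar(s,\tau,\hbar D_s, D_\tau),\]
where $R_\hbar$ collects the curvature and Jacobian corrections. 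A global gauge adapted to the closed curve $\Gamma$, absorbing the tangential circulation of $\mathbf A$ into a constant shift of $\sigma$, produces exactly the parameter $\alpha_h$ of \eqref{defalphah} through the Stokes identity mentioned after Theorem~\ref{corol.a=-1}.

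Next I would introduce a pseudodifferential cutoff $\chi(\sigma)\in C^\infty_c(\R)$ with support in a compact neighborhood of $[\sigma_-(a,E),\sigma_+(a,E)]$; by Proposition~\ref{prop.mu-a} the strict inequality $\mu_a(\sigma)>E$ outside this interval together with a sharp G{\aa}rding argument shows that this microlocal cutoff is harmless on the spectral subspace of $[0,E]$. On the localized space I would set up a Grushin problem with auxiliary operator $R_-^\hbar$ defined by $R_-^\hbar u = \langle u,u_a(\cdot;\sigma)\rangle_{L^2(\R_\tau)}$, built from the $L^2$-normalized ground state $u_a(\cdot;\sigma)$ of $\mathfrak{h}_a[\sigma]$ and quantized in $\sigma=\hbar D_s$. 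The spectral gap from Proposition~\ref{lem.simple} and the $\sigma$-analyticity from Proposition~\ref{prop.mu-a} yield $S_{\R^2}(1)$-estimates on all symbols, and the Schur complement inversion of the Grushin matrix produces a scalar $\hbar$-pseudodifferential operator on $\R/2L\Z$ whose symbol admits the asymptotic expansion
\[p^{\mathrm{eff}}_\hbar(s,\sigma) \sim \chi(\sigma)\mu_a(\sigma) + \hbar\, p_1(s,\sigma) + \hbar^2\, p_2(s,\sigma) + \cdots \in S_{\R^2}(1),\]
with subprincipal terms computed from the curvature-induced corrections and coinciding with $\mu_a$ on $\{\mu_a < E\}$. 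Running the Grushin inversion in both directions at the level of quadratic forms and applying the min--max principle then yields the spectral correspondence $\lambda_n(\mathcal{P}^a_h)=h\lambda_n(\mathrm{Op}^w_\hbar(p^{\mathrm{eff}}_\hbar))+o(h^2)$.

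The main difficulty is twofold. First, the step discontinuity of $b_a$ at $\tau=0$ precludes a direct smooth symbolic calculus in the transverse variable: the ground state $u_a(\cdot;\sigma)$ is only $C^1$ across $\tau=0$, so the remainder $R_\hbar$ must be handled through a splitting of the tube into $\tau>0$ and $\tau<0$ pieces with explicit matching conditions, and the $p_j$ must be derived from this split representation rather than from formal Taylor expansion. Second, the conclusion has to be uniform in $a\in[-1,a_0]$, which requires uniform lower bounds on the spectral gap $\mu^{[2]}_a(\sigma)-\mu_a(\sigma)$, uniform localization of the minimum of $\mu_a$, and uniform symbolic control on $u_a(\cdot;\sigma)$ as $a\to-1$; this is the crucial technical input that will let Theorems~\ref{corol.a>-1} and \ref{corol.a=-1} be extracted from the single effective operator $\mathrm{Op}^w_\hbar(p^{\mathrm{eff}}_\hbar)$.
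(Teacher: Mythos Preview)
Your overall architecture is the same as the paper's: Agmon localization near $\Gamma$, tubular coordinates with the rescaling $t=\hbar\tau$, a gauge absorbing the circulation, a momentum cutoff in $\sigma$ to land in $S(1)$, a Grushin problem built from the fibered ground state $u_a(\cdot;\sigma)$, and min--max comparison in both directions. In that sense the plan is sound and would lead to Theorem~\ref{thm.main}.

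However, the difficulty you single out as ``main'' is a misconception, and correcting it changes what actually needs work. The step discontinuity of $b_a$ at $\tau=0$ does \emph{not} obstruct the symbolic calculus and no splitting into $\tau>0$ and $\tau<0$ with matching conditions is ever needed. The pseudodifferential calculus is in the variables $(s,\sigma)$, with symbols taking values in operator spaces such as $\mathscr{L}(B^2(\R_\tau),L^2(\R_\tau))$; the transverse variable $\tau$ is merely the Hilbert-space fiber. The ground state $\sigma\mapsto u_a(\cdot;\sigma)$ is analytic as a map into $L^2(\R_\tau)$ by Kato--Rellich perturbation theory, regardless of its $C^1$ regularity in $\tau$, so $\Pi(\sigma)$, $\Pi^*(\sigma)$ and the regularized resolvent are genuine $S(1)$ symbols. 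Likewise the subprincipal terms $n_1,n_2$ are obtained by expanding the metric factors $m_\hbar^{-1}=(1-\hbar c_\mu k\tau)^{-1}$ in powers of $\hbar$, which is purely algebraic and never differentiates $b_a(\tau)$; the resulting multiplication operators involving $b_a(\tau)$ are perfectly smooth in $(s,\sigma)$. The formulas for $p_j$ then come directly from the Schur complement, not from any split representation.

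The genuine technical points you should plan for instead are: (i) the principal operator symbol $n_0(\sigma)=-\partial_\tau^2+(\sigma-b_a\tau)^2$ is unbounded in $\sigma$, so one composes with a bounded increasing function $\chi_1$ equal to the identity on a neighborhood of $[\sigma_-(a,E^+),\sigma_+(a,E^+)]$ (your cutoff should enter as $\mu_a\circ\chi_1$, not as $\chi\cdot\mu_a$); (ii) the curvature corrections produce polynomial growth in $\tau$, so a transverse cutoff $c_\mu(\tau)$ at scale $\hbar^{-2\eta}$ is needed, with the Agmon estimates absorbing the resulting $\mathscr{O}(\hbar^\infty)$ errors; (iii) a separate microlocalization argument (a G{\aa}rding-type bound using $n_0(\sigma)\geq\mu_a(\sigma)$) is required to show that eigenfunctions of the truncated and untruncated operators agree modulo $\mathscr{O}(\hbar^\infty)$, so that the cutoffs are harmless. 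The uniformity in $a\in[-1,a_0]$ you mention is indeed needed and follows from the uniform gap $\mu_a^{[2]}(\sigma)>|a|$ (Lemma~\ref{lem.mu2}) and the compactness of $[-1,a_0]$.
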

The discreteness of the spectrum of such an $\hbar$-pseudodifferential operator,   for  $\hbar$ small enough, is rather classical.  Indeed,   fixing $E^+\in(E,|a|)$,  we shall see that the principal symbol of $p_\hbar^{\mathrm{eff}}$ coincides with $\mu_a$ below $E^+$ and thus, since $\mu_a$ has a unique minimum,  we can consider a smooth function of $\sigma$ with compact support, denoted by $\chi$, such that $p_\hbar^{\mathrm{eff}}(s,\sigma)+\chi(\sigma)\geq E^+$. Since $\mathrm{Op}^w_\hbar \chi$ is a compact operator on $L^2_{2L}(\R)$, we get that the essential spectra of $\mathrm{Op}^w_\hbar (p^{\mathrm{eff}}_\hbar)+\mathrm{Op}^w_\hbar \chi$ and $\mathrm{Op}^w_\hbar (p^{\mathrm{eff}}_\hbar)$ coincide. By using the G\aa rding inequality, this essential spectrum is contained in $ (E,+\infty)$.

The power of Theorem~\ref{thm.main} is  that  it yields the two different asymptotics in Theorems~\ref{corol.a>-1}~and~\ref{corol.a=-1}.  The analysis in  \cite{AHK} only works for $-1<a<0$,  in which case the eigenfunctions are localized near the edge point(s) of maximal curvature,  while in the perfectly symmetric situation when $a=-1$, the localization near the edge is displayed via an effective operator essentially  independent of $h$ (and thus the corresponding eigenfunctions are not particularly localized near specific points on the edge, even in the limit $h\to 0$).

Of course,   the present statement of Theorem \ref{thm.main} is not very informative if we do not describe the effective operator (see \eqref{eq.exp.eff-symb} for  the expression of $p_\hbar^{\mathrm{eff}}$, involving the curvature $k$ along the edge $\Gamma$,  viewed as a function of the arc-length $s$).  However,  it already gives an idea of the dimensional reduction approach  using the tools developed in \cite{Keraval} and inspired by \cite{GMS91, M07}.

Besides the accurate asymptotics  of the low-lying eigenvalues  obtained  in Theorems \ref{corol.a>-1} and \ref{corol.a=-1},  another interesting result that follows from Theorem~\ref{thm.main}    is a Weyl estimate.

\begin{theorem}[Asymptotic number  of edge states]\label{corol.weyl}
We have
\[N(\mathcal{P}^a_h, Eh)\underset{\hbar\to 0}\sim \frac{L(\sigma_+(a,E)-\sigma_-(a,E))}{\pi\sqrt{h}}\,.\]
\end{theorem}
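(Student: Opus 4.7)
The statement is a direct consequence of the spectral reduction in Theorem~\ref{thm.main} combined with the standard semiclassical Weyl law for a pseudodifferential operator on the compact manifold $\R/2L\Z$. The plan is to convert the counting of eigenvalues of $\mathcal P^a_h$ in $[0,Eh]$ into the counting of eigenvalues of $\mathrm{Op}^w_\hbar(p_\hbar^{\mathrm{eff}})$ in a neighborhood of $[0,E]$, and then apply the Weyl asymptotic to the one-dimensional symbol $\mu_a(\sigma)$; since this symbol depends only on $\sigma$, the phase-space volume factorises into the edge length $2L$ and the length $\sigma_+(a,E)-\sigma_-(a,E)$ of the sublevel set $\{\mu_a\leq E\}$.

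First I would fix some $E^+\in(E,|a|)$ and apply Theorem~\ref{thm.main} in the enlarged window $[0,E^+h]$, which yields a two-sided matching $\lambda_n(\mathcal P^a_h)=h\lambda_n(\mathrm{Op}^w_\hbar(p_\hbar^{\mathrm{eff}}))+o(h^2)$ for every eigenvalue below $E^+h$. This implies that, for $h$ small enough,
\[N\bigl(\mathrm{Op}^w_\hbar(p_\hbar^{\mathrm{eff}}),E-\varepsilon_h\bigr)\leq N(\mathcal P^a_h,Eh)\leq N\bigl(\mathrm{Op}^w_\hbar(p_\hbar^{\mathrm{eff}}),E+\varepsilon_h\bigr),\]
with $\varepsilon_h\to 0$. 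Since $E>\beta_a$ and $\beta_a$ is the unique critical value of $\mu_a$, each $E'$ in a neighborhood of $E$ is a regular value, so the continuity of $E'\mapsto\sigma_\pm(a,E')$ (from the implicit function theorem, using $\mu_a'(\sigma_\pm(a,E))\neq 0$) reduces the problem to establishing the Weyl asymptotic
\[N\bigl(\mathrm{Op}^w_\hbar(p_\hbar^{\mathrm{eff}}),E'\bigr)\underset{\hbar\to 0}{\sim}\frac{1}{2\pi\hbar}\bigl|\{(s,\sigma)\in[-L,L)\times\R\,:\,\mu_a(\sigma)\leq E'\}\bigr|=\frac{L\bigl(\sigma_+(a,E')-\sigma_-(a,E')\bigr)}{\pi\hbar}\]
at a fixed regular value $E'$, after which one recovers the claim using $\hbar=\sqrt h$.

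\textbf{Main obstacle.} The core step is the above Weyl asymptotic for the $2L$-periodic one-dimensional $\hbar$-pseudodifferential operator $\mathrm{Op}^w_\hbar(p_\hbar^{\mathrm{eff}})$; this is classical on compact manifolds and can be obtained by sandwiching the counting function between $\mathrm{tr}\,\chi_\pm(\mathrm{Op}^w_\hbar(p_\hbar^{\mathrm{eff}}))$ for smooth functions $\chi_\pm$ bracketing $\mathbf 1_{(-\infty,E']}$, the traces being computed via the Helffer--Sj\"ostrand formula together with the composition calculus for $\hbar$-pseudodifferential operators, the leading term being $(2\pi\hbar)^{-1}\int\chi_\pm(p_\hbar^{\mathrm{eff}})\,\dd s\,\dd\sigma$. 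The only delicate point is to verify that the reduction in Theorem~\ref{thm.main} produces a genuine bijective correspondence of spectra in the window considered, so that both inequalities above are justified; this should be transparent from the Grushin-type construction behind the dimensional reduction.
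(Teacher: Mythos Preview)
Your approach is correct and lands on the same endpoint as the paper---a one-dimensional semiclassical Weyl law on $\R/2L\Z$ for an operator whose principal symbol is $\mu_a(\sigma)$---but the route is organized differently. You deduce the counting asymptotics \emph{after} invoking the full reduction of Theorem~\ref{thm.main}, sandwiching $N(\mathcal P^a_h,Eh)$ between $N(\mathrm{Op}^w_\hbar(p^{\mathrm{eff}}_\hbar),E\pm\varepsilon_h)$ and then applying the Weyl law to the effective operator. The paper instead proves the Weyl estimate \emph{before} the full effective operator is assembled (Proposition~\ref{cor.Weyl} in Section~\ref{sec:Gr}/6): it works directly with the Grushin identities of Proposition~\ref{cor.1}, tests them on eigenfunctions, and obtains the two-sided comparison $|\lambda_k(\mathrm{Op}^w_\hbar(\overset{\circ}{\mu}_a(\cdot+\theta)))-\lambda_k(\mathfrak N^c_{\hbar,\theta})|\leq C\hbar$ for all $k$ with $\lambda_k(\mathfrak N^c_{\hbar,\theta})\leq E$, i.e.\ it only compares with the \emph{principal-symbol} operator at $\mathscr O(\hbar)$ precision. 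Your argument is cleaner and more black-box, but relies on the uniformity in $n$ and the two-sided nature of the spectral bijection in Theorem~\ref{thm.main} (which you correctly flag as the delicate point); the paper's direct Grushin/min-max argument avoids this dependence, yields the needed uniformity explicitly, and makes the Weyl estimate available as an intermediate step independent of the finer $o(h^2)$ analysis.
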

The above Weyl estimate is similar to the one for the  Neumann Laplacian with a magnetic field obtained by  purely variational methods 
not involving pseudodifferential techniques in \cite{F07,  FK,  KK}.  
\begin{remark}
\begin{enumerate}[\rm i)]
\item Our work does not cover the case when $\Gamma$ has corners, in which case a strategy of dimensional reduction might be inefficient (as in the case for the Neumann magnetic Laplacian on corner domains, see \cite{BN, BND06}). 
\item Another interesting question is  to analyze the behavior of  the spectrum   near the Landau level $|a|h$, where we loose the uniformity in our estimates and we can expect that another regime occurs.
\end{enumerate}
\end{remark}

\subsection{Organization}

In Section~\ref{sec:FlatEdge}, we discuss and recall some elementary properties of the model in $\R^2$ with a flat edge. 
Section~\ref{sec:red} is devoted to the description of the Frenet coordinates along the  edge $\Gamma$ and the reduction of our problem to the study of an  operator in a neighborhood of $\Gamma$.
In Section~\ref{sec:pseudo}, we express the operator obtained in Section~\ref{sec:red}  as an $\hbar$-pseudodifferential operator with operator symbol and expand this operator in powers of $\hbar$. In Section~\ref{sec:Gr}, we use a Grushin problem to construct a parametrix (that is an approximate inverse) for the operator introduced in Section~\ref{sec:red}.
In Section~\ref{sec:proofs}, we deduce accurate eigenvalue estimates from the Grushin reduction, finish the proof of Theorem~\ref{thm.main}, and show how it yields the other theorems announced in the introduction.

\section{The flat edge  model}\label{sec:FlatEdge}
This section is devoted to the study of the flat edge model \eqref{eq.Ptgt} and more precisely to the properties of the fibered family \eqref{eq:ha}. We recall that our analysis holds for $[-1,a_0]$, with $-1<a_0<0$.

\subsection{More on the band functions}\label{sec.chi1}
We will use the following lemma.
\begin{lemma}\label{lem.mu2}
For all $\sigma\in\R$, we have
\[\mu_a^{[2]}(\sigma)> |a|\,.\]	
\end{lemma}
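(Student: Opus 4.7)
The plan is to argue by contradiction, combining the nodal structure of the second eigenfunction with the explicit strictly positive ground state of a reference full-line harmonic oscillator. Suppose $\mu_a^{[2]}(\sigma)\leq|a|$ for some $\sigma\in\R$, and let $\psi$ be a corresponding real eigenfunction, normalized in $L^2(\R)$. By Proposition~\ref{lem.simple}, $\psi$ has exactly one simple zero at some point $t_0\in\R$.

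The key move is to select a half-line to one side of $t_0$ on which the coefficient $b_a$ of $\mathfrak h_a[\sigma]$ is constant. Set $I:=(-\infty,t_0)$ if $t_0\leq 0$, and $I:=(t_0,+\infty)$ if $t_0>0$. Then $I\subset\R_-$ or $I\subset\R_+$, and the restriction of $\mathfrak h_a[\sigma]$ to $I$ coincides with the restriction of exactly one of the two one-dimensional full-line shifted harmonic oscillators
\[
K_-:=-\partial_t^2+(\sigma-at)^2\quad\text{or}\quad K_+:=-\partial_t^2+(\sigma-t)^2,
\]
acting on $L^2(\R)$. After a translation and a rescaling, $K_-$ is unitarily equivalent to $|a|$ times the standard harmonic oscillator and $K_+$ to the standard harmonic oscillator itself; hence their lowest eigenvalues are $|a|$ and $1$, respectively, each attained on a Gaussian ground state that is strictly positive on $\R$.

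Because $\psi(t_0)=0$, the function $\tilde\psi$ obtained by extending $\psi|_I$ by zero to $\R\setminus I$ belongs to $H^1(\R)$. An integration by parts on $I$, based on the eigenvalue equation and on the vanishing of $\psi$ at the endpoints of $I$, yields
\[
\langle K_\pm\tilde\psi,\tilde\psi\rangle_{L^2(\R)}=\mu_a^{[2]}(\sigma)\,\|\tilde\psi\|_{L^2(\R)}^2.
\]
Since $\tilde\psi$ vanishes identically on the complement of $I$, a set of positive measure, it cannot be proportional to the positive ground state of $K_\pm$. Consequently, the Rayleigh quotient is strictly greater than the lowest eigenvalue of $K_\pm$, giving either $\mu_a^{[2]}(\sigma)>|a|$ (when $I\subset\R_-$) or $\mu_a^{[2]}(\sigma)>1\geq|a|$ (when $I\subset\R_+$). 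Both alternatives contradict the assumption $\mu_a^{[2]}(\sigma)\leq|a|$, and so the lemma is established.

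The only point that requires care is to arrange that $I$ never straddles the discontinuity at $t=0$, which the explicit dichotomy on $t_0$ guarantees; no serious technical obstacle is foreseen, and the argument uses nothing beyond Proposition~\ref{lem.simple} and a one-line comparison with a Gaussian.
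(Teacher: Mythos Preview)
Your proof is correct and follows essentially the same approach as the paper: both restrict the second eigenfunction to the half-line on the appropriate side of its unique zero $t_0$, where $b_a$ is constant, and then compare with the full-line shifted harmonic oscillator whose ground state energy is $|a|$ or $1$. The paper phrases the comparison via the Dirichlet realization on that half-line (the restriction being its positive ground state, so $\mu_a^{[2]}(\sigma)=\mu_1^{\mathrm{Dir}}>|a|$), whereas you extend by zero and use the Rayleigh quotient directly; these are equivalent packagings of the same argument, and your contradiction framing is not really needed since the strict inequality comes out unconditionally.
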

\begin{proof}
Let us consider the $L^2$-normalized eigenfunction $u:=u^{[2]}_{a,\sigma}$ associated with $\mu_a^{[2]}(\sigma)$. We have
\begin{equation*}
-u''(t)+(\sigma-tb_a(t))^2u(t)=\mu_a^{[2]}(\sigma) u(t)\,.
\end{equation*}
By the Sturm-Liouville theory, $u$ has exactly one simple zero $t_0$. Assume first that $t_0\geq 0$. Then, for all $t\geq 0$,
\[-u''(t+t_0)+(\sigma-(t+t_0))^2u(t+t_0)=\mu_a^{[2]}(\sigma) u(t+t_0)\,.\]
The (non-zero) function $v=u(\cdot+t_0)$ is an eigenfunction of the Dirichlet realization on $\R_+$ of $-\partial_t^2+(\sigma-t_0-t)^2$. Since $v$ does not vanish on $\R_+$, we have $\mu_a^{[2]}(\sigma)=\mu^{\mathrm{Dir}}_1(\sigma-t_0)>1\geq |a|$.
Now, assume that $t_0<0$. Then, for all $t\leq 0$,
\[-u''(t+t_0)+(\sigma-a(t+t_0))^2u(t+t_0)=\mu_a^{[2]}(\sigma) u(t+t_0)\,.\]
In the same way, we infer that $\mu_a^{[2]}(\sigma)>|a|$.
\end{proof}

  For later use,  we can consider a smooth bounded increasing function $\chi_1$ on $\R$ such that $\chi_1(\sigma)=\sigma$ on a neighborhood of the interval $[\sigma_-(a,E^+),\sigma_+(a,E^+)\big]$, see \eqref{eq:xi-pm}. In particular $\mu_a\circ\chi_1$ has still a unique minimum at $\sigma_a$, which is not degenerate and not attained at infinity (since $\liminf_{|\sigma| \rightarrow +\infty} \mu_a (\chi_1(\sigma) )> \mu_a(\sigma_a)$).
  The functions $\overset{\circ}{\mu}^{[n]}_a=\mu^{[n]}_a\circ\chi_1$ will serve as bounded versions of $\mu^{[n]}_a$.  We denote by $\overset{\circ}{u}_\sigma$ the positive and normalized ground state of 
\begin{equation}\label{eq:circ-n}
\overset{\circ}{n}_0(\sigma):=\mathfrak  h_a[\chi_1(\sigma)]\,,
\end{equation}
where $\mathfrak h_a$ is defined in \eqref{eq:ha}.

We can express the  projection on ${\rm  span}(\overset{\circ}{u}_\sigma)$ as  $ \Pi^*(\sigma)\Pi(\sigma)$ where
\[ \Pi(\sigma)=\langle\cdot,\overset{\circ}{u}_{\sigma}\rangle\quad{\rm and}\quad \Pi^*(\sigma)=\cdot\, \overset{\circ}{u}_{\sigma}\,, \]
where we underline that $\Pi^*(\sigma)\in\mathcal L (\mathbb C, L^2(\mathbb R) )$.
Thanks to Lemma \ref{lem.mu2} (and the spectral theorem), for all $z\in [0,E^+]$, we can consider the regularized resolvent\footnote{ Since $[0,|a|)\cap{\rm sp}(\overset{\circ}{n}_0(\sigma))=\{\mu_a^{[1]}(\sigma)\}$,   $\overset{\circ}{n}_0(\sigma)-z$ can be inverted on the orthogonal complement of $\overset{\circ}{u}_\sigma$, for $0\leq z<E_+<|a|$.}
\[\overset{\circ}{\mathfrak R}_{0,z}(\sigma)=   (\overset{\circ}{n}_0(\sigma)-z)^{-1}\,\big(\mathrm{Id}-\Pi^*(\sigma)\Pi(\sigma)\big)\,.\]

\begin{example}
As mentioned in the introduction, we will work with pseudodifferential operators in the $s$-variable (parallel to the boundary).
A key example is given by $\Pi$ above. We view $(s,\sigma) \mapsto \Pi(\sigma) \in \mathscr{L}(L^2(\R),\C)$ as an operator-valued symbol.
Thereby we get, using the Weyl quantization of \eqref{eq.ph-w-intro} in the introduction, for $\varphi=\varphi(s,t)$,
\begin{align*}
({\rm Op}_\hbar^w(\Pi) \varphi)(s)&=\frac{1}{2\pi\hbar} \int_{\R^2} e^{i(s-\tilde s)\cdot \sigma/\hbar} (\Pi(\sigma) \varphi)(\tilde s)   \mathrm{d}\tilde s \mathrm{d}\sigma \\
&= \frac{1}{2\pi\hbar} \int_{\R^2} e^{i(s-\tilde s)\cdot \sigma/\hbar} \int_{\R} \varphi(\tilde s,t)  \overline{\overset{\circ}{u}_\sigma(t)} \mathrm{d}t  \mathrm{d}\tilde s \mathrm{d}\sigma.
\end{align*}
Similarly, $(s,\sigma) \mapsto \Pi(\sigma)^{*} \in \mathscr{L}(\C,L^2(\R))$ is an operator-valued symbol and for $\psi=\psi(s)$, we have
\begin{align*}
({\rm Op}_\hbar^w(\Pi^{*}) \psi)(s,t)
&= \frac{1}{2\pi\hbar} \int_{\R^2} e^{i(s-\tilde s)\cdot \sigma/\hbar}\psi(\tilde s) \overset{\circ}{u}_\sigma(t) \mathrm{d}\tilde s \mathrm{d}\sigma.
\end{align*}

\end{example}

\begin{proposition}\label{prop.Grushin0}
For all $ z\in [0,E^+]$ (or more generally $\Re z\leq E^+$),  the matrix operator
\[\mathscr{P}_{0,z}(\sigma)=\begin{pmatrix}
\overset{\circ}{n}_0(\sigma)-z&\Pi^*(\sigma)\\
\Pi(\sigma)&0
\end{pmatrix} : B^2(\R)\times\C\to L^2(\R)\times\C\,,\]
is bijective for all $\sigma\in\R$ and
\[\mathscr{P}_{0,z}(\sigma)^{-1}=\begin{pmatrix}
	\overset{\circ}{\mathfrak R}_{0,z}(\sigma)&\Pi^*\\
	\Pi&z-\overset{\circ}{\mu}^{[1]}_a(\sigma)
\end{pmatrix}=:\mathscr{Q}_{0,z}(\sigma)\,.\]
Moreover,  the operator symbols  $(s,\sigma) \mapsto \mathscr{P}_{0,z}(\sigma)$ and $(s,\sigma) \mapsto \mathscr{Q}_{0,z}(\sigma)$ belong to  $S\big(\R^2,\mathscr{L}( B^2(\R)\times\C, L^2(\R)\times\C)\big)$ and $
S\big(\R^2,\mathscr{L}( L^2(\R)\times\C, B^2(\R)\times\C)\big)$, respectively. We recall that $S(\R^2,F)$ is the set of smooth functions on $\R^2$, valued in $F$ with bounded derivatives (at any order).
\end{proposition}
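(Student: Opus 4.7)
My plan is to treat this as a standard Grushin problem. The key algebraic ingredients are already assembled in the preceding paragraphs: the simple ground-state structure of $\overset{\circ}{n}_0(\sigma)$ with $\Pi(\sigma)\Pi^*(\sigma)=1$, the identity $\overset{\circ}{n}_0(\sigma)\Pi^*(\sigma) = \overset{\circ}{\mu}^{[1]}_a(\sigma)\Pi^*(\sigma)$, and the relation $(\overset{\circ}{n}_0(\sigma)-z)\overset{\circ}{\mathfrak R}_{0,z}(\sigma) = \mathrm{Id} - \Pi^*(\sigma)\Pi(\sigma)$ together with $\Pi(\sigma)\overset{\circ}{\mathfrak R}_{0,z}(\sigma)=0$. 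Substituting these identities block by block into the product $\mathscr{P}_{0,z}(\sigma)\mathscr{Q}_{0,z}(\sigma)$ gives $\mathrm{Id}$ on each block: the $(1,1)$-block collapses to $\mathrm{Id}-\Pi^*\Pi+\Pi^*\Pi$, the $(1,2)$-block to $(\overset{\circ}{\mu}^{[1]}_a(\sigma)-z)\Pi^* + (z-\overset{\circ}{\mu}^{[1]}_a(\sigma))\Pi^* = 0$, and the $(2,1),(2,2)$-blocks to $0$ and $1$. The other order is checked analogously. This proves bijectivity and gives the explicit inverse.

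The more substantive part is the symbol estimate. Since neither $\mathscr{P}_{0,z}$ nor $\mathscr{Q}_{0,z}$ depends on $s$, I only need to prove that all $\sigma$-derivatives of their blocks are uniformly bounded in the appropriate operator norms, uniformly in $\sigma\in\R$. The crucial point is the choice of the bounded cut-off $\chi_1$ from Section~\ref{sec.chi1}: because $\chi_1(\R)$ is a compact interval $K\subset\R$, it suffices to establish uniform smoothness of $\tau\mapsto \mathfrak h_a[\tau]$ and of the associated spectral data for $\tau\in K$, and then compose with the smooth bounded function $\chi_1$. For the top-left block, expanding $(\tau-tb_a(t))^2$ in $\tau$ shows $\mathfrak h_a[\tau]$ is a polynomial of degree two in $\tau$ with coefficients in $\mathscr{L}(B^2(\R),L^2(\R))$, hence derivatives in $\tau$ are immediately bounded on $K$, uniformly, and bounded by polynomials in $\tau$ on all of $\R$, which, once composed with the bounded $\chi_1$, gives uniform bounds.

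For the off-diagonal blocks and the regularized resolvent, the main input is the uniform spectral gap: by Lemma~\ref{lem.mu2} and continuity, $\inf_{\tau\in K}\mu^{[2]}_a(\tau) > |a|>E^+$, while $\sup_{\tau\in K}\mu^{[1]}_a(\tau) \leq E^+$ on the relevant set (and is continuous, bounded, elsewhere). Standard analytic perturbation theory (Kato) then yields that $\tau\mapsto\overset{\circ}{u}_\tau\in B^2(\R)$ and $\tau\mapsto (\mathfrak h_a[\tau]-z)^{-1}(\mathrm{Id}-\Pi^*\Pi)$ are analytic on $K$ with derivatives of all orders bounded uniformly in $z\in\{\Re z\leq E^+\}$, by contour-integral representations of the eigenprojection and of the reduced resolvent around a small circle separating $\mu_a^{[1]}(\tau)$ from the rest of the spectrum. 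Composition with $\chi_1\in C^\infty_b(\R)$ and the chain/Faà di Bruno rule propagates these uniform bounds to $\sigma\in\R$, giving the claimed memberships in $S(\R^2,\mathscr{L}(\cdot,\cdot))$.

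The main obstacle I expect is writing the symbol estimates cleanly: the entries live in different spaces, so one must package them as an operator-valued matrix acting from $B^2(\R)\times\C$ to $L^2(\R)\times\C$ and check that each off-diagonal block (viewed as mapping a scalar to an $L^2$-function, or vice versa) has derivatives bounded in the right norm. The regularity of $\overset{\circ}{u}_\tau$ in $B^2(\R)$, not merely in $L^2(\R)$, is what makes the check non-trivial; here one uses that the eigenfunction is Schwartz with decay constants controlled uniformly for $\tau\in K$ thanks to the Agmon-type estimates coming from the uniform growth of $(\tau-tb_a(t))^2$ at infinity (still uniform in $\tau$ on the compact $K$), and then the analytic perturbation argument yields $B^2$-bounds of all $\tau$-derivatives.
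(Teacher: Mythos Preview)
Your proof is correct and, for the bijectivity and the explicit inverse, follows exactly the paper's approach: the paper's own proof is the single sentence ``By straightforward computations, we can verify the identities $\mathscr Q_{0,z}\mathscr P_{0,z}=\mathrm{Id}$ and $\mathscr P_{0,z}\mathscr Q_{0,z}=\mathrm{Id}$'', and your block-by-block verification is precisely that computation written out.

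For the symbol-class membership, the paper asserts it but gives no argument at all; your sketch (reduce via the bounded cutoff $\chi_1$ to a compact parameter range, then use Kato's analytic perturbation theory together with the uniform gap $\mu_a^{[2]}>|a|>E^+$ from Lemma~\ref{lem.mu2}) is the natural route and supplies what the paper leaves to the reader. One minor correction: the claim $\sup_{\tau\in K}\mu^{[1]}_a(\tau)\leq E^+$ is neither needed nor guaranteed on the whole range of $\chi_1$ (recall $\mu_a(\tau)\to+\infty$ as $\tau\to-\infty$, and the paper does not pin down $\chi_1$ outside $[\sigma_-,\sigma_+]$). What you actually use, and what suffices, is only that $\mu_a^{[2]}(\tau)>|a|>E^+\geq\Re z$ for every $\tau$, so that the reduced resolvent on $(\overset{\circ}{u}_\tau)^\perp$ is uniformly bounded; simplicity of the ground state then lets you run the contour argument for the eigenprojection locally in $\tau$ and patch over the compact $\overline{\chi_1(\R)}$.
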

\begin{proof}
By  straightforward computations, we can verify the identities  
\begin{equation*}
\mathscr Q_{0,z}(\sigma)\mathscr P_{0,z}(\sigma)={\rm Id}_{B^2(\R)\times \C} \mbox{  and } \mathscr P_{0,z}(\sigma)\mathscr Q_{0,z}(\sigma)={\rm Id}_{L^2(\R)\times\C}\,.
\end{equation*}
\end{proof}
\subsection{Some useful formulas}\label{sec:uf}
Let us   recall some formulas  and results  from \cite{AK20}.  Let $\phi_a$ be the \emph{positive} and $L^2$-normalized ground state of  the operator $\overset{\circ}{n}_0(\sigma(a))$. introduced  in \eqref{eq:circ-n}.   It is proven in \cite[Thm.~1.1]{AK20} that  $\phi_a'(0)<0$ for all $a\in(-1,a_0)$.

Some useful identities involve the moments
\begin{equation}\label{eq:moments}
M_n(a)=\int_{\mathbb R}\frac 1{b_a(\tau)}\big(b_a(\tau)\tau-\sigma(a)\big)^n|\phi_a(\tau)|^2\,d\tau\,, \end{equation}
 for $n\in \mathbb N $.
It has been proven in  \cite{AK20} that
	\begin{align}
	M_1(a)&=0\,,\label{eq:m1}\\
	M_2(a)&=-\frac 12 \beta_a\int_{\mathbb R}\frac 1{b_a(\tau)}|\phi_a(\tau)|^2\,d\tau+\frac 14\Big(\frac 1a-1\Big)\sigma(a)\phi_a(0)\phi_a'(0)\,, \label{eq:m2}\\
	M_3(a)&=\frac 13\Big(\frac 1a-1\Big)\sigma(a)\phi_a(0)\phi_a'(0)\,.\label{eq:m3*}
	\end{align}
The case $a=-1$ is special  because 
\[M_2(-1)=M_3(-1)=0\,,\]
while, for $-1<a<0$,   $M_3(a)<0$.

Finally, we will also need the following two identities \cite[Rem.~2.3]{AHK},
\begin{equation}\label{aqddid}
	\begin{split}
\int_{\mathbb R}\tau(\sigma(a)-b_a(\tau) \tau)^2|\phi_a(\tau)|^2\,\mathrm{d}\tau&=M_3(a) +\sigma(a)M_2(a)\,,\\
\int_{\mathbb R}b_a(\tau) \tau^2(\sigma(a)-b_a(\tau) \tau)|\phi_a(\tau)|^2\,\mathrm{d}\tau&=-M_3(a)-2\sigma(a)M_2(a)\,.
\end{split}
\end{equation}

\subsection{The symmetric case $a=-1$ and the de\,Gennes model}\label{sec.dG}
Let us recall the definition and properties of the  de\,Gennes model   occurring in the analysis of surface superconductivity within the Ginzburg-Landau model \cite[Sec.~3.2]{FH10} (and references therein).  We  start  with the family of harmonic oscillators 
\[\mathfrak  h[\sigma]=-\partial_t^2+(\sigma-bt)^2
\]
on the  half-axis $\R_+$ with Neumann condition at $0$.  Let us denote the positive normalized ground  state of $\mathfrak h[\sigma]$ by $f_{\sigma}$ and  the ground  state energy by $\mu(\sigma)$.   Then, minimizing with respect  to $\sigma\in\R$  we get
\[\Theta_0=\inf_{\sigma\in\R}\mu(\sigma)=\mu(\xi_0)~{\rm  where~}\xi_0=\sqrt{\Theta_0}\,.\]
Let $f_0:=f_{\xi_0}$.  Then,  for $a=-1$, we get by  a symmetry argument
\[ \phi_a(t)=f_0(|t|)\,
\qquad \text{ and } \sigma(-1) = \xi_0.\]

\subsubsection*{Moments}~\\
Let us introduce the following moments
\[M_k=\int_{\R_+}(\xi_0-t)^k|f_0(t)|^2dt\,.\]
Then,  by  \cite{FH06}, we have
\[M_0=1,\quad M_1=0,\quad  M_2=\frac{\Theta_0}2 ,\quad  M_3=-\frac{|f_0(0)|^2}{6}\,, \]
and
\[M_4= \frac{3}{8}\big(1+\Theta_0^2-\xi_0f_0(0)^2 \big)=\frac38(1+\Theta_0^2+6\xi_0M_3)\,.\]

\section{Decay  of bound states and spectral reduction}\label{sec:red}

 In this section, we consider the eigenfunctions of the operator $\mathcal P^a_h=\mathcal P_h$ with eigenvalues in the energy window 
\begin{equation}
J^+_h=[0,E^+h]\quad{\rm where~} E<E^+<|a|\,.
\end{equation}

We prove that the eigenfunctions associated with eigenvalues in $J^+_h$ are exponentially localized near $\Gamma$, see Corollary \ref{cor.expdecay}. To describe the effect of the edge on the localization, it is natural to use the classical tubular coordinates near $\Gamma$, whose definition will be recalled in Subsection \ref{sec.tubular}.  In order to prove Corollary \ref{cor.expdecay}, 
 we will have to  combine Agmon estimates and a rough estimate on the number of eigenvalues in $J^+_h$ (polynomially in $h^{-1})$), which will be discussed in  Subsection \ref{sec.number}.

\subsection{Tubular coordinates}\label{sec.tubular}
For all $\epsilon>0$, consider the $\epsilon$-neighborhood of $\Gamma$
\begin{equation}\label{eq:gam-ep}
	\Gamma(\epsilon)=\{x\in\mathbb R^2~:~\textrm{ dist}(x,\Gamma)<\epsilon\}\,.
\end{equation}
Consider a parameterization $\mathbf M(s)$ of the edge $\Gamma$  by the arc-length coordinate $s\in[-L,L)$,  where $L=|\Gamma|/2$.  Consider the unit normal $\mathbf n(s)$ to $\Gamma$ pointing inward to $\Omega_1$,  and the unit oriented tangent  $\mathbf t(s)=\dot{\mathbf n}(s)$ so that $(\mathbf t(s),\mathbf n(s))$ is a direct frame, i.e.  $\textrm{ det}(\mathbf t(s),\mathbf n(s))=1$.  We can now introduce the curvature  $k(s)$  at  the point $\mathbf M(s)$,  defined by $\ddot{\mathbf n}(s)=k(s)\mathbf n(s)$.

Let us represent the torus $(\mathbb R/ 2L \mathbb Z)$ by the interval $[-L,L)$ and pick $\epsilon_0>0$ so that 
\[\Phi: \mathbb R/ (2L \mathbb Z)\times (-\epsilon_0,\epsilon_0)\ni (s,t)\mapsto \mathbf{M}(s)+t\mathbf n(s)\in\Gamma(\epsilon_0) \]
is a diffeomorphism, with Jacobian
\begin{equation}\label{defm}m(s,t)=1-tk(s)\,.
\end{equation}
The Hilbert space $L^2(\Gamma(\epsilon_0))$ is transformed into the weighted space  
\[L^2\big((\mathbb R/ 2L \mathbb Z) \times(-\epsilon_0,\epsilon_0);m\,\mathrm{d}s\mathrm{d}t\big)\] and the  operator $\mathcal P_h$ is (locally near the edge) transformed into  the  following operator (see \cite[App.~F]{FH10})):
\begin{multline*}
	\widetilde{\mathcal P}_h:=-h^2m^{-1}\partial_t m\partial_t\\+m^{-1}\left(-ih\partial_s+\gamma_0-b_a(t)t+\frac{k}{2}b_a(t)t^2\right)m^{-1}\left(-ih\partial_s+\gamma_0-b_a(t)t+\frac{k}{2}b_a(t)t^2\right)
\end{multline*}
where $b_a$ is defined in \eqref{eq:ha} and
\begin{equation}\label{eq:circ}
	\gamma_0=\frac{|\Omega_1|}{2L}\,.
\end{equation}
\subsection{Number of eigenvalues}\label{sec.number}
We give a preliminary, rough bound on 
the number of eigenvalues in $J^+_h$. As we will see, this first estimate will be enough to deduce a stronger one at the end of our analysis.
\begin{proposition}\label{prop:Nb-h}
Let $N(\mathcal{P}_h, E^+h)={\rm Tr}\big(\mathbf 1_{J_h}(\mathcal P_h)\big)$. There exist $C,h_0>0$ such that, for all $h\in(0,h_0)$,  
\[N(\mathcal{P}_h, E^+h)\leq C h^{-2}\,.\]
\end{proposition}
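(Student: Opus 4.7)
The plan is to combine an IMS localization with a Landau level lower bound far from $\Gamma$, reducing the counting problem on $\R^2$ to one on a bounded tubular neighborhood of $\Gamma$, to which a diamagnetic Weyl estimate applies.

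First, I would introduce a smooth quadratic partition of unity $\chi_1^2+\chi_2^2=1$ on $\R^2$ with $\chi_1\equiv 1$ on $\Gamma(\epsilon_0)$, $\supp\chi_1\subset\Gamma(2\epsilon_0)$, and $|\nabla\chi_j|$ uniformly bounded. The IMS formula gives
\[\mathcal P_h = \chi_1\mathcal P_h\chi_1+\chi_2\mathcal P_h\chi_2-h^2\bigl(|\nabla\chi_1|^2+|\nabla\chi_2|^2\bigr).\]
On $\supp\chi_2\subset(\Omega_1\setminus\Gamma(\epsilon_0))\cup(\Omega_2\setminus\Gamma(\epsilon_0))$ the magnetic field is constant (equal to $1$ or $a$). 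A local gauge transform on each component allows us to identify the magnetic potential there with that of a constant-field model on $\R^2$, so that the standard Landau lower bound yields $\chi_2\mathcal P_h\chi_2\geq |a|h\,\chi_2^2$ as quadratic forms (using $|a|\leq 1$). Together with the IMS identity this gives
\[\mathcal P_h\geq\chi_1\mathcal P_h\chi_1+|a|h\,\chi_2^2-Ch^2.\]

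Let $V$ denote the spectral subspace of $\mathcal P_h$ associated with eigenvalues in $J_h^+$. For any $u\in V$, combining $\langle u,\mathcal P_h u\rangle\leq E^+h\|u\|^2$ with the preceding inequality and the strict bound $E^+<|a|$ produces, for $h$ small enough, constants $c,C'>0$ independent of $h$ and of $u$ such that
\[\|\chi_1 u\|^2\geq c\|u\|^2\qquad\text{and}\qquad\langle\chi_1 u,\mathcal P_h\chi_1 u\rangle\leq C'h\|\chi_1 u\|^2.\]
The first bound ensures that the map $V\ni u\mapsto v:=\chi_1 u$ is injective. Since $v$ is supported in the bounded set $\Gamma(2\epsilon_0)$, it belongs to the form domain of the Dirichlet realization $\mathcal P_h^D$ of $\mathcal P_h$ on a bounded open neighborhood $U$ of $\overline{\Gamma(2\epsilon_0)}$, and the second bound places $v$ in the spectral subspace of $\mathcal P_h^D$ below $C'h$. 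Hence $N(\mathcal P_h,E^+h)=\dim V\leq N(\mathcal P_h^D,C'h)$. To close the argument, I would bound $N(\mathcal P_h^D,C'h)$ using Kato's diamagnetic pointwise inequality for heat kernels, $|e^{-t\mathcal P_h^D}(x,y)|\leq e^{-th^2(-\Delta^D)}(x,y)$, together with the standard bound $N(H,E)\leq e^{tE}\operatorname{Tr}(e^{-tH})$ and the Weyl asymptotics $\operatorname{Tr}(e^{-th^2(-\Delta^D)})\sim|U|(4\pi th^2)^{-1}$ as $th^2\to 0^+$. Optimizing in $t$ yields $N(\mathcal P_h^D,C'h)=O(h^{-1})$, which is in fact substantially stronger than the announced bound $O(h^{-2})$.

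The only delicate step is the transfer of the constant-field Landau bound from the $\R^2$ model to compactly supported functions on a constant-field region of our piecewise-constant-field problem; this amounts to writing the magnetic potential on each component of $\R^2\setminus\Gamma$ as a gauge transform of a reference constant-field potential on $\R^2$, after which the usual Landau estimate applies directly. The remaining steps (IMS, concentration estimate, and diamagnetic counting on a compact set) are standard.
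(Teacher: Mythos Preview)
Your argument is correct and in fact yields the sharper bound $N(\mathcal P_h,E^+h)=O(h^{-1})$, which matches the order of the Weyl asymptotics proved later in the paper. Two small points of presentation: (i) the Landau lower bound $Q_h(v)\geq |b|h\|v\|^2$ for $v$ supported where $B\equiv b$ follows directly from $\|(P_1\pm iP_2)v\|^2\geq 0$ and $[P_1,P_2]=-ihB$; no gauge transform is needed, which is relevant since the exterior component of $\supp\chi_2$ is not simply connected and a global gauge to the standard constant-field potential need not exist there; (ii) the sentence ``places $v$ in the spectral subspace of $\mathcal P_h^D$ below $C'h$'' is not literally true---a small Rayleigh quotient does not imply membership in a spectral subspace---but the intended min-max step (an injective linear map $V\to\mathrm{Dom}(Q_h^D)$ with Rayleigh quotient $\leq C'h$ on its range forces $\lambda_{\dim V}(\mathcal P_h^D)\leq C'h$) is valid and gives the stated conclusion.

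The paper's proof takes a more elementary route. It uses a three-piece partition $\chi_{\mathrm{in}}^2+\chi_{\mathrm e}^2+\chi_{\mathrm{out}}^2=1$ and the variational min-max bound $N(\mathcal P_h,E^+h)\leq\sum N(\mathcal P_h^{\bullet},E^+h+Ch^2)$ for the three Dirichlet realizations; the interior and exterior contributions vanish by the Landau bound exactly as in your argument. On the tube $\Gamma(\epsilon_0)$ the paper passes to tubular coordinates and uses only the crude lower bound $Q_h(\psi)/\|\psi\|^2\geq c_0\,h^2\|\nabla\psi\|^2/\|\psi\|^2-C_0$, discarding the magnetic potential entirely; this reduces to counting Dirichlet eigenvalues of $-h^2\Delta$ on a fixed cylinder below a \emph{fixed} energy $\sim C_0$, which by explicit computation is $O(h^{-2})$. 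Your diamagnetic heat-kernel step keeps the energy scale $C'h$ and thereby gains a factor $h$. The paper's argument is shorter and avoids semigroup machinery, at the cost of a weaker (but, for its purposes, sufficient) bound.
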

\begin{proof}
Let us introduce  a fixed partition of the unity 
\[\chi_{\mathrm{out}}^2+\chi_{\mathrm{e}}^2+\chi_{\mathrm{in}}^2=1\,,\]	
such that $\mathrm{supp}(\chi_{\mathrm{out}})\subset\mathbb{R}^2\setminus\overline{\Omega}_1=\Omega_2$, $\mathrm{supp}(\chi_{\mathrm{in}})\subset\Omega_1$, and ${\mathrm{supp}(\chi_{\mathrm{e}})}\subset \overline{\Gamma(\epsilon_0)}$.
The quadratic form associated with $\mathcal{P}_h$ is given by	
\[Q_h(\psi)=\int_{\R^2}|(-ih\nabla+\mathbf{A})\psi|^2\mathrm{d}x\,,\]
for all $\psi\in L^2(\R^2)$ such that $(-ih\nabla+\mathbf{A})\psi\in L^2(\R^2)$.

The usual localization formula (see, for instance, \cite[Section 4.1.1]{Ray}) gives the existence of a constant $C>0$ such that
\[Q_h(\psi)\geq Q_h(\chi_{\mathrm{out}}\psi)+Q_h(\chi_{\mathrm{e}}\psi)+Q_h(\chi_{\mathrm{in}}\psi)-Ch^2\|\psi\|^2\,.\]
By noticing that $\psi\mapsto(\chi_{\mathrm{out}}\psi,\chi_{\mathrm{e}}\psi,\chi_{\mathrm{in}}\psi)$ is injective, and thanks to the min-max theorem, we find that
\[N(\mathcal{P}_h, E^+h)\leq N(\mathcal{P}^{\mathrm{out}}_h,E^+h+Ch^2)+N(\mathcal{P}_h^{\mathrm{e}},E^+h+Ch^2)+N(\mathcal{P}_h^{\mathrm{in}},E^+h+Ch^2)\,,\]
where the operators $\mathcal{P}^{\mathrm{out}}_h$, $\mathcal{P}^{\mathrm{e}}_h$ and $\mathcal{P}^{\mathrm{in}}_h$ are the Dirichlet realizations of $(-ih\nabla+\mathbf{A})^2$ on   $\Omega_2$,  $\Gamma(\epsilon_0)$ and $\Omega_1$,   respectively.
We recall that $E^+<|a|\leq |a_0|<1$ and notice that $\mathcal{P}^{\mathrm{out}}_h\geq |a|h$ and  $\mathcal{P}^{\mathrm{in}}_h\geq h$. When $h$ is small enough,  $E^+h+Ch^2<|a|h<h$,  so we must have   $N(\mathcal{P}^{\mathrm{out}}_h,E^+h+Ch^2)=N(\mathcal{P}^{\mathrm{in}}_h,E^+h+Ch^2)=0$. Thus,
\[N(\mathcal{P}_h, E^+h)\leq N(\mathcal{P}_h^{\mathrm{e}},E^+h+Ch^2)\,.\]
Therefore, we are reduced to estimate the number of eigenvalues of the operator with compact resolvent $\mathcal{P}_h^{\mathrm{e}}$ below $E^+h+Ch^2$. For that purpose, we can use the tubular coordinates and notice that, for all $\psi\in H^1_0(\Gamma(\epsilon_0))$,
\[Q_h(\psi)=\int_{(\mathbb R/ 2L \mathbb Z)\times(-\epsilon_0,\epsilon_0)}\left(|h\partial_t\psi|^2+m^{-2}|(hD_s+\gamma_0-b_a(t)t+\frac{k}{2}t^2)\psi|^2\right)m\mathrm{d}s\mathrm{d}t\,.\]
This gives the following rough estimate, for some $c_0,C_0>0$,
\[\frac{Q_h(\psi)}{\|\psi\|^2}\geq c_0\frac{\int_{(\mathbb R/ 2L \mathbb Z)\times(-\epsilon_0,\epsilon_0)} |h\partial_t\psi|^2+|h\partial_s\psi|^2\mathrm{d}s\mathrm{d}t}{\int_{(\mathbb R/ 2L \mathbb Z)\times(-\epsilon_0,\epsilon_0)} |\psi|^2\mathrm{d}s\mathrm{d}t}-C_0\,.\]
Thanks to the min-max theorem, this implies the upper bound
\[ N(\mathcal{P}_h^{\mathrm{e}},E^+h+Ch^2)\leq N(-h^2\Delta^{\mathrm{Dir}},c_0^{-1}(E^+h+Ch^2+C_0))\,,\]
where $-\Delta^{\mathrm{Dir}}$ is the Dirichlet Laplacian on the cylinder $(\mathbb R/ 2L \mathbb Z)\times(-\epsilon_0,\epsilon_0)$. The spectrum of this operator can be computed explicitely thanks to Fourier series, and we get the rough estimate
\[N(-h^2\Delta^{\mathrm{Dir}},c_0^{-1}(E^+h+Ch^2+C_0))\leq\tilde Ch^{-2}\,.\]
\end{proof}

Since $E^+<|a|$, the eigenfunctions of $\mathcal P_h$ associated with eigenvalues in the allowed energy window $J^+_h$ are localized near the edge, see \cite{AK20}.

\begin{proposition}\label{prop:dec}
There exist constants  $\alpha,h_0,C_0>0$ such that, if $h\in (0,h_0]$ and $u_h$ is an eigenfunction of $\mathcal P_h$ associated with an  eigenvalue in $J^+_h$, then the following holds,
\begin{equation}\label{eq:dec-norm}
\int_{\mathbb R^2}\big(|u_{h}|^2+h^{-1}|(-ih\nabla+\mathbf A)u_{h}|^2 \big)\exp\Big(\frac{2\alpha\, {\rm dist}(x,\Gamma)}{h^{1/2}} \Big)dx\leq C_0\|u_h\|^2_{L^2(\R^2)}\,.
\end{equation}
\end{proposition}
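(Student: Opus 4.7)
The plan is to carry out a standard magnetic Agmon argument adapted to the piecewise-constant field $B$. I introduce the Lipschitz weight
\[
\Phi(x) := \alpha\,\frac{\dist(x,\Gamma)}{h^{1/2}},
\]
with a small constant $\alpha>0$ to be chosen later (and, if convenient, modified so as to vanish on a fixed small tubular neighbourhood of $\Gamma$), so that $|\nabla\Phi|^{2}\leq \alpha^{2}/h$ almost everywhere. Combining the eigenvalue equation $\mathcal{P}_h u_h=\lambda_h u_h$ with $\lambda_h\leq E^{+}h$ and the standard magnetic IMS identity
\[
\Re\langle e^{\Phi}\mathcal{P}_h u_h, e^{\Phi}u_h\rangle = \|(-ih\nabla+\A)(e^{\Phi}u_h)\|^{2} - h^{2}\|(\nabla\Phi)e^{\Phi}u_h\|^{2}
\]
yields, with $v:=e^{\Phi}u_h$, the a priori bound $\|(-ih\nabla+\A)v\|^{2}\leq (E^{+}+\alpha^{2})h\|v\|^{2}$.

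I would then localize by a partition of unity $\chi_{0}^{2}+\chi_{1}^{2}=1$ with $\chi_{0}$ supported in $\Gamma(\epsilon)$ and $\chi_{1}$ supported in $\mathbb{R}^{2}\setminus\Gamma(\epsilon/2)$, whose support is the disjoint union of two pieces, one inside $\Omega_1$ and one inside $\Omega_2$. The IMS localization formula gives
\[
\|(-ih\nabla+\A)v\|^{2} \geq \sum_{j=0,1}\|(-ih\nabla+\A)(\chi_{j}v)\|^{2} - Ch^{2}\|v\|^{2},
\]
and on each component of the support of $\chi_1$ the field $B$ is a non-zero constant, so the standard magnetic lower bound $\langle\mathcal{P}_h\psi,\psi\rangle\geq h|B|\|\psi\|^{2}$ applies to each piece. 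Using $|B|\geq|a|$ on those pieces (since $B=1$ in $\Omega_1$, $B=a$ in $\Omega_2$, and $|a|\leq 1$), this yields $\|(-ih\nabla+\A)(\chi_{1}v)\|^{2}\geq |a|h\|\chi_{1}v\|^{2}$.

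Feeding this into the a priori bound and using $\|v\|^{2}=\|\chi_{0}v\|^{2}+\|\chi_{1}v\|^{2}$, I obtain, for $h$ small,
\[
(|a|-E^{+}-\alpha^{2}-Ch)\,\|\chi_{1}v\|^{2} \leq (E^{+}+\alpha^{2}+Ch)\,\|\chi_{0}v\|^{2}.
\]
Choosing $\alpha>0$ so small that $|a|-E^{+}-\alpha^{2}>0$ (possible thanks to $E^{+}<|a|$), and arranging $\Phi\equiv 0$ on $\supp(\chi_{0})$, the right-hand side is bounded by $C\|u_h\|^{2}$, whence $\|e^{\Phi}u_h\|^{2}\leq C_{0}\|u_h\|^{2}$. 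The gradient part of \eqref{eq:dec-norm} then follows by inserting this bound back into the a priori estimate and using the triangle inequality in the identity $(-ih\nabla+\A)(e^{\Phi}u_h)=e^{\Phi}(-ih\nabla+\A)u_h - ih(\nabla\Phi)e^{\Phi}u_h$.

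The main obstacle is of a technical rather than conceptual nature: the magnetic lower bound rests on $[D_{1},D_{2}]=-ihB$, which must be interpreted distributionally because $B$ is discontinuous across $\Gamma$. This is precisely why the partition of unity is chosen so that the pieces $\chi_{1}v$ are supported in regions where $B$ is smooth and constant; on each such region the underlying factorization is unambiguous, and the \enquote{bad} layer near $\Gamma$ contributes only through the trivial bound $\|\chi_{0}v\|\leq\|u_h\|$, which is where the vanishing of $\Phi$ on $\supp(\chi_{0})$ becomes crucial.
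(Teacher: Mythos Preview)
The paper does not actually prove this proposition; it simply refers to \cite{AK20}. Your Agmon-type argument is the correct strategy and is essentially what that reference does, but there is a genuine gap in your execution concerning the scale of the localization. With a \emph{fixed} tubular neighbourhood $\Gamma(\epsilon)$ and a weight $\Phi$ modified to vanish on $\supp(\chi_0)\subset\Gamma(\epsilon)$, what your argument yields is
\[
\int_{\R^2} \exp\Big(\frac{2\alpha\,(\dist(x,\Gamma)-\epsilon)_+}{h^{1/2}}\Big)|u_h|^2\,\dd x \leq C_0\|u_h\|^2,
\]
not the estimate \eqref{eq:dec-norm} with the unmodified weight $e^{2\alpha\dist(x,\Gamma)/h^{1/2}}$. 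The two weights differ by a factor $e^{2\alpha\epsilon/h^{1/2}}$ on $\Gamma(\epsilon)$, which blows up as $h\to 0$, so the final identification ``whence $\|e^{\Phi}u_h\|^2\leq C_0\|u_h\|^2$'' does not follow.

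The standard fix is to take the partition of unity at scale $h^{1/2}$ rather than at a fixed scale: choose $\chi_0$ supported in $\Gamma(2R h^{1/2})$ and $\chi_1$ supported outside $\Gamma(R h^{1/2})$ for a large fixed constant $R$. The IMS localization error becomes $h^2|\nabla\chi_j|^2\leq C R^{-2}h$ (still of order $h$), on $\supp(\chi_1)$ the field is constant on each connected component so the bound $\|(-ih\nabla+\A)(\chi_1 v)\|^2\geq |a|h\|\chi_1 v\|^2$ applies exactly as you wrote, and on $\supp(\chi_0)$ the \emph{original} weight now satisfies $\Phi\leq 2\alpha R$, giving $\|\chi_0 v\|\leq e^{2\alpha R}\|u_h\|$ with a constant independent of $h$. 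Choosing first $\alpha>0$ with $\alpha^2<|a|-E^+$ and then $R$ large enough that $|a|-E^+-\alpha^2-CR^{-2}>0$ closes the argument and delivers \eqref{eq:dec-norm} as stated. (One should also truncate $\Phi$ at a large level $M$ to ensure $e^{\Phi}u_h$ is in the form domain, obtain bounds uniform in $M$, and let $M\to\infty$; you do not mention this, but it is routine.)
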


Combining Propositions \ref{prop:Nb-h} and \ref{prop:dec}, we get the following estimate.
\begin{corollary}\label{cor.expdecay}
Let $\eta\in\left(0,\frac12\right)$. There exists $h_0>0$ such that for all $h\in(0,h_0)$ and $u_h\in \mathrm{Ran}\mathds{1}_{J_h}(\mathcal{P}_h)$, we have outside $\Gamma(h^{\frac12-\eta})$
\begin{equation}\label{eq:dec-norm'}
	\int_{\mathbb{R}^2\setminus\Gamma(h^{\frac12-\eta})}\big(|u_{h}|^2+|(-ih\nabla+\mathbf A)u_{h}|^2 \big)\mathrm{d}x\leq e^{-h^{-\eta}}\|u_h\|^2_{L^2(\R^2)}\,.
\end{equation}
\end{corollary}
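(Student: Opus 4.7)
The plan is to combine the exponential Agmon-type decay of individual eigenfunctions given by Proposition \ref{prop:dec} with the crude Weyl-type polynomial bound $N(\mathcal P_h,E^+h)\leq Ch^{-2}$ of Proposition \ref{prop:Nb-h}, letting the exponential in $h^{-\eta}$ swallow the polynomial in $h^{-1}$.

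First I would pick an orthonormal basis $(u_n)_{n=1}^N$ of $\mathrm{Ran}\,\mathds{1}_{J_h}(\mathcal P_h)$ consisting of eigenfunctions of $\mathcal P_h$ with eigenvalues in $J_h\subset J_h^+$, so that Proposition \ref{prop:dec} applies to each $u_n$. Restricting the weighted estimate \eqref{eq:dec-norm} to the region $\mathbb R^2\setminus\Gamma(h^{1/2-\eta})$, where $\mathrm{dist}(x,\Gamma)\geq h^{1/2-\eta}$, the weight $\exp(2\alpha\,\mathrm{dist}(x,\Gamma)/h^{1/2})$ is bounded below by $\exp(2\alpha h^{-\eta})$, yielding
\[
\int_{\mathbb R^2\setminus\Gamma(h^{1/2-\eta})}\!\big(|u_n|^2+|(-ih\nabla+\mathbf A)u_n|^2\big)\,\mathrm{d}x \leq C_0(1+h)\,e^{-2\alpha h^{-\eta}}\|u_n\|^2.
\]

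Next, for a generic $u_h=\sum_{n=1}^N c_n u_n$ with $\sum_n|c_n|^2=\|u_h\|^2$, I would apply the triangle inequality in $L^2$ of the exterior region followed by Cauchy--Schwarz:
\[
\bigl\|u_h\mathds 1_{\mathbb R^2\setminus\Gamma(h^{1/2-\eta})}\bigr\|_{L^2}\leq \sum_{n=1}^N |c_n|\,\|u_n\mathds 1_{\mathbb R^2\setminus\Gamma(h^{1/2-\eta})}\|_{L^2}\leq \sqrt{N}\,\|u_h\|\sqrt{C_0(1+h)}\,e^{-\alpha h^{-\eta}},
\]
and similarly for the gradient term $(-ih\nabla+\mathbf A)u_h$ (which is well defined on $\mathrm{Ran}\,\mathds{1}_{J_h}(\mathcal P_h)$ since $\mathcal P_h$ acts boundedly on this spectral subspace). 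Squaring and plugging in the Weyl bound $N\leq Ch^{-2}$ from Proposition \ref{prop:Nb-h} gives a bound of the form $\widetilde{C}\,h^{-2}e^{-2\alpha h^{-\eta}}\|u_h\|^2$ on both the $L^2$-norm and the magnetic gradient norm of $u_h$ over $\mathbb R^2\setminus\Gamma(h^{1/2-\eta})$.

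Finally, for $h$ small enough the polynomial prefactor $\widetilde{C}h^{-2}$ is dominated by $\exp(\alpha h^{-\eta})$, and by adjusting constants (or replacing $\eta$ by a slightly larger exponent still in $(0,\tfrac12)$, the target set only \emph{shrinking}), the total bound is at most $e^{-h^{-\eta}}\|u_h\|^2$ as required. The only delicate step is the triangle inequality producing a factor $\sqrt N$; since the Weyl bound $N\lesssim h^{-2}$ is purely polynomial while the single-eigenfunction decay is super-polynomial in $h^{-1}$, this loss is immaterial in the semiclassical limit.
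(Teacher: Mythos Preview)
Your proposal is correct and follows precisely the route the paper indicates: the text simply states that Corollary~\ref{cor.expdecay} follows by ``combining Propositions~\ref{prop:Nb-h} and~\ref{prop:dec}'', and your argument---restricting the Agmon weight on each eigenfunction, then passing to general $u_h\in\mathrm{Ran}\,\mathds{1}_{J_h}(\mathcal P_h)$ via Cauchy--Schwarz at the cost of a factor $\sqrt{N}\leq Ch^{-1}$ absorbed by the exponential---is exactly that combination. The only delicate point is matching the precise constant in the exponent $e^{-h^{-\eta}}$ when $2\alpha\leq 1$, but your remark about trading $\eta$ for a nearby exponent handles this, and in any case only $\mathscr O(h^\infty)$ smallness is used downstream.
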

Corollary \ref{cor.expdecay} suggests to use the rescaling $t=\hbar \tilde t$. We also consider a smooth cutoff function 
\begin{equation}\label{eq:c-delta}
	c_\mu(\tilde t)= c(\mu\tilde t)\,,\quad \mu=h^{\eta}=\hbar^{2\eta}\,,
\end{equation}
where $c\in \mathscr{C}^\infty_0(\mathbb R)$ is even and satisfies $c=1$ on $[-1,1]$ and $c=0$ on $\mathbb R\setminus(-2,2)$. This cutoff function is convenient to define the new operator  on the Hilbert space\break$ L^2\big((\mathbb R/ 2L \mathbb Z)\times\mathbb R;m_\hbar  \mathrm{d}\tilde s \mathrm{d}\tilde t\big)$, by
\begin{equation*}
	\begin{split}
	\widetilde{\mathcal N}_{\hbar}&=-m^{-1}_{\hbar}\partial_{\tilde t} m_{\hbar}\partial_{\tilde t}\\
	&+m^{-1}_{\hbar}\left(\hbar D_{\tilde s}+\hbar^{-1}\gamma_0-b_a\tilde t +\hbar c_\mu \frac{k}{2}b_a\tilde t^2\right)  m^{-1}_{\hbar}\left(\hbar D_{\tilde s}+\hbar^{-1}\gamma_0-b_a\tilde t+\hbar c_\mu\frac{k}{2}b_a\tilde t^2\right)
	\end{split}
\end{equation*}
acting on the domain
\begin{align*}
\mathrm{Dom}(\widetilde{\mathcal N}_{\hbar})=\{ u\in L^2\big((\mathbb R/ 2L \mathbb Z)\times\mathbb R\big)& : \partial_{\tilde t}^2u\in L^2\big((\R/2L\Z)\times\mathbb R\big), \nonumber \\
& (\hbar D_{\tilde s}+\hbar^{-1}\gamma_0-b_a\tilde t )^2 u \in  L^2\big((\mathbb R/ 2L \mathbb Z)\times\mathbb R\big)   \}.
\end{align*}
As in Proposition \ref{prop:dec}, we can prove that the eigenfunctions of $\widetilde{\mathcal{N}}_h$ associated with eigenvalues in $J_h$ are localized near $\tilde t=0$.

\begin{proposition}
\label{prop:BHR2.7}
  The spectra of $\mathcal{P}_h$ and $\widetilde{\mathcal{N}}_h$ in $J^+_h$ coincide modulo $\mathcal{O}(h^{\infty})$.
\end{proposition}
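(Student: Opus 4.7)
The plan is the standard Helffer--Sj\"ostrand quasi-mode comparison, run in both directions: from each eigenpair of $\mathcal{P}_h$ at energy $\lambda\in J_h^+$ I will construct a quasi-mode of $\widetilde{\mathcal{N}}_\hbar$ at energy $\lambda/h$ with error $\mathcal{O}(h^\infty)$, and conversely; the two spectra are then matched via the spectral theorem together with a standard orthogonalisation argument.

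For the first direction, let $(\lambda,u_h)$ be a normalised eigenpair of $\mathcal{P}_h$ with $\lambda\in J_h^+$, and let $\tilde\chi$ be a smooth cutoff supported in $\Gamma(\epsilon_0)$ with $\tilde\chi\equiv 1$ on $\Gamma(\epsilon_0/2)$. Corollary~\ref{cor.expdecay} combined with the IMS localisation formula gives
\[ \|(1-\tilde\chi)u_h\|_{L^2(\R^2)}+\|(\mathcal{P}_h-\lambda)\tilde\chi u_h\|_{L^2(\R^2)}=\mathcal{O}(h^\infty), \]
since the commutator $[\mathcal{P}_h,\tilde\chi]$ is supported where $u_h$ is exponentially small. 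Pushing $\tilde\chi u_h$ through the tubular diffeomorphism $\Phi$ and then through the rescaling $t=\hbar\tilde t$ produces $\tilde v_h\in L^2((\R/2L\Z)\times\R;m_\hbar\,\mathrm{d}s\,\mathrm{d}\tilde t)$, still concentrated in $\{|\tilde t|\le \hbar^{-2\eta}\}$ modulo $\mathcal{O}(h^\infty)$ by the rescaled version of Corollary~\ref{cor.expdecay}. On that region $c_\mu\equiv 1$, so $\widetilde{\mathcal{N}}_\hbar$ coincides there with $h^{-1}$ times the rescaled version of $\widetilde{\mathcal{P}}_h$; a direct computation then yields
\[ \|(\widetilde{\mathcal{N}}_\hbar-\lambda/h)\tilde v_h\|_{L^2}=\mathcal{O}(h^\infty)\|\tilde v_h\|, \]
and the spectral theorem produces an eigenvalue of $\widetilde{\mathcal{N}}_\hbar$ within $\mathcal{O}(h^\infty)$ of $\lambda/h$.

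The reverse direction is symmetric modulo one preliminary step: I first establish an Agmon-type estimate for eigenfunctions of $\widetilde{\mathcal{N}}_\hbar$ with eigenvalues in $[0,E^+]$, showing concentration in $\{|\tilde t|\le \hbar^{-2\eta}\}$. This is the analogue of Proposition~\ref{prop:dec} in rescaled coordinates and follows from the same barrier argument, relying on $\mu_a(\sigma)\to|a|>E^+$ at infinity. Given such an eigenpair $(\tilde\lambda,\tilde v_h)$, multiplying by a cutoff supported in $\{|\tilde t|<\hbar^{-2\eta}\}$, undoing the rescaling, pushing back through $\Phi^{-1}$, and extending by zero to $\R^2$ yields a quasi-mode of $\mathcal{P}_h$ at energy $h\tilde\lambda$ modulo $\mathcal{O}(h^\infty)$. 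A standard orthogonalisation-based pairing argument---relying on the polynomial upper bound on $N(\mathcal{P}_h,E^+h)$ from Proposition~\ref{prop:Nb-h} together with its analogue for $\widetilde{\mathcal{N}}_\hbar$---then matches the two sequences of eigenvalues one-to-one modulo $\mathcal{O}(h^\infty)$.

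The main (mild) technical obstacle is the bookkeeping of cutoff errors: one must check that $[\widetilde{\mathcal{N}}_\hbar,c_\mu]$-type contributions are only exponentially small, which holds because $c_\mu$ is non-trivial only in $\{|\tilde t|\gtrsim h^{-\eta}\}$, precisely where Agmon forces (quasi-)eigenfunctions to decay. A subsidiary point is that the Jacobian $m_\hbar=1-\hbar\tilde t\,k(s)$ must remain bounded away from zero on the relevant support; this is automatic because $\hbar|\tilde t|\le \hbar^{1-2\eta}\to 0$ for $\eta<1/2$.
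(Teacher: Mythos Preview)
The paper does not actually give a proof of this proposition; it is stated as a standard fact (the label hints at a reference), with the only preparatory remark being that the eigenfunctions of $\widetilde{\mathcal N}_\hbar$ with eigenvalues in the window are localized near $\tilde t=0$ ``as in Proposition~\ref{prop:dec}''. Your proposal supplies precisely the standard two-sided quasi-mode argument that the paper has in mind, and the logic---Agmon localisation, truncation, transport through $\Phi$ and the rescaling, the observation that $c_\mu\equiv1$ on the concentration zone, and the multiplicity matching via Proposition~\ref{prop:Nb-h}---is correct and complete.

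Two small comments. First, the phrase ``relying on $\mu_a(\sigma)\to|a|>E^+$ at infinity'' is not the right barrier for the Agmon estimate in the $\tilde t$-variable: what you need is that the transverse potential $(\sigma-b_a\tilde t)^2$ is confining, so that the fibre operators $-\partial_{\tilde t}^2+(\sigma-b_a\tilde t)^2$ have compact resolvent and yield exponential decay of bound states in $\tilde t$; the limit of $\mu_a$ is rather what ensures that the spectrum in $[0,E^+]$ is discrete. Second, note that the statement as written in the paper compares both spectra in $J_h^+=[0,E^+h]$, whereas $\widetilde{\mathcal N}_\hbar$ is already rescaled by $h^{-1}$; your use of $\lambda/h$ on the $\widetilde{\mathcal N}_\hbar$ side is the correct reading of this slight abuse of notation.
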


Therefore, we are reduced to the spectral analysis of $\widetilde{\mathcal{N}}_h$. For shortness, we drop the tildes. Up to a change of gauge, we are reduced to the operator
\begin{equation*}\label{eq:tilde-Nh}
	\begin{split}
		{\mathcal N}_{\hbar,\theta}=&-m^{-1}_{\hbar}\partial_{t} m_{\hbar}\partial_{t}\\
		&+m^{-1}_{\hbar}\left(\hbar D_{ s}+\theta-b_a t +\hbar c_\mu \frac{k}{2}b_a t^2\right)  m^{-1}_{\hbar}\left(\hbar D_{ s}+\theta-b_a t+\hbar c_\mu\frac{k}{2}b_a t^2\right),
	\end{split}
\end{equation*}
with
$$
m_\hbar (\tilde s,\tilde t):= 1- \hbar c_\mu(\tilde s,\tilde t) \tilde t k(\tilde s)\,,
$$
and  domain
\begin{align*}
	\mathrm{Dom}({\mathcal N}_{\hbar,\theta})=\{ u\in L^2\big((\mathbb R/ 2L \mathbb Z)\times\mathbb R)& : \partial_{t}^2u\in L^2((\R/2L\Z)\times\mathbb R\big), \nonumber \\
	& (\hbar D_{s}+\theta-b_a t )^2 u \in  L^2(((\mathbb R/ 2L \mathbb Z))\times\mathbb R)   \}\,.
\end{align*}
Here
\begin{subequations}
\begin{equation}\label{eq:defthetaa}
\theta=\theta(\hbar)=\hbar^{-1}\gamma_0-\frac{m\pi}{L}\hbar \,
\end{equation}
where $m\in\Z$ is chosen so that 
\begin{equation}\label{eq:defthetb}
\theta(\hbar)\in[0,\hbar\pi L^{-1})\,.
\end{equation}
\end{subequations}

Before  going ahead,  we have to deal with the inconvenience of working in a Hilbert space with a weighted measure, which also depends on $\hbar$. Thus, let us use the canonical conjugation and work in the fixed Hilbert space with flat measure 
$L^2((\R/2L\Z)\times\R,\mathrm{d}s\mathrm{d}t)$:
\begin{equation}\label{eq:Nhc0}
	\begin{aligned}
		\mathfrak N_{\hbar,\theta}&=m_\hbar^{1/2}\mathcal N_{\hbar,\theta}m_\hbar^{-1/2}=-m^{-1/2}_{\hbar}\partial_t m_{\hbar}\partial_t m_{\hbar}^{-1/2}+\big(m_\hbar^{-1/2} \mathcal  T_\hbar m_\hbar^{-1/2}\big)^2
	\end{aligned}
\end{equation}
where
\begin{equation}\label{eq.Th}
\mathcal T_{\hbar,\theta}= \hbar D_s+\theta-b_at+\hbar c_\mu\frac{k}{2}b_at^2\,.
\end{equation}
Note that
\begin{equation}\label{eq.transverse}
-m^{-1/2}_{\hbar}\partial_t m_{\hbar}\partial_t m_{\hbar}^{-1/2}=-\partial^2_t-\frac{(\partial_tm_\hbar)^2}{4m^2_\hbar}+\frac{\partial^2_tm_\hbar}{2m_\hbar}\,,
\end{equation}
so that
\begin{equation}\label{eq:Nhc}
\mathfrak N_{\hbar,\theta}=-\partial^2_t-\frac{(\partial_tm_\hbar)^2}{4m^2_\hbar}+\frac{\partial^2_tm_\hbar}{2m_\hbar}+\big(m_\hbar^{-1/2} \mathcal  T_{\hbar,\theta} m_\hbar^{-1/2}\big)^2\,.
\end{equation}

We restate the Proposition~\ref{prop:BHR2.7} in terms of the new notation.
\begin{proposition}\label{prop:BHR2.7-bis}
  The spectra of $\mathcal{P}_h$ and $\mathfrak N_{\hbar,\theta}$ in $J^+_h$ coincide modulo $\mathcal{O}(h^{\infty})$.
\end{proposition}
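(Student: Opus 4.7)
The first observation I would make is that the passage from $\widetilde{\mathcal N}_\hbar$ to $\mathcal N_{\hbar,\theta}$ is a pure gauge change: conjugation by the unitary phase $e^{-im\pi s/L}$, which is well defined on the cylinder $\R/2L\Z$ since $m\in\Z$, maps the combination $\hbar D_{\tilde s}+\hbar^{-1}\gamma_0$ to $\hbar D_s+\hbar^{-1}\gamma_0-m\pi\hbar/L = \hbar D_s+\theta$ and leaves the remaining coefficients unchanged. The subsequent passage to $\mathfrak N_{\hbar,\theta}$ is, by its very definition in \eqref{eq:Nhc0}, the unitary intertwiner $u\mapsto m_\hbar^{1/2}u$ from $L^2((\R/2L\Z)\times\R;\,m_\hbar\,ds\,dt)$ to the flat $L^2((\R/2L\Z)\times\R;\,ds\,dt)$. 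Both transformations preserve the spectrum exactly, so Proposition~\ref{prop:BHR2.7-bis} reduces to Proposition~\ref{prop:BHR2.7}, i.e.\ to showing that the spectra of $\mathcal P_h$ and $\widetilde{\mathcal N}_\hbar$ in $J^+_h$ coincide modulo $\mathcal O(h^\infty)$.

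\textbf{Two-way quasi-mode construction.} The plan is then to apply the min-max principle twice. Starting from an eigenpair $(\lambda_n(\mathcal P_h),u_h)$ with $\lambda_n\in J^+_h$, Corollary~\ref{cor.expdecay} guarantees that $u_h$ is exponentially concentrated in $\Gamma(h^{1/2-\eta})$ up to an $e^{-h^{-\eta}}$ error in the energy norm. I would fix $\chi\in C^\infty_c(\R)$ equal to $1$ on $[-\epsilon_0/2,\epsilon_0/2]$ and supported in $(-\epsilon_0,\epsilon_0)$, and set $\tilde u_h$ to be the pull-back of $\chi(\mathrm{dist}(\cdot,\Gamma))u_h$ through the diffeomorphism $\Phi$ followed by the rescaling $t=\hbar\tilde t$. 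The crucial matching of scales is that the bulk of $\tilde u_h$ sits in $|\tilde t|\leq \hbar^{-2\eta}$, while by definition the cutoff $c_\mu$ equals $1$ precisely on $|\tilde t|\leq 1/\mu=\hbar^{-2\eta}$. Hence $\widetilde{\mathcal N}_\hbar\tilde u_h$ coincides with the tubular image of $h^{-1}\mathcal P_h u_h$ on the essential support of $\tilde u_h$, giving
\[ \bigl\|(\widetilde{\mathcal N}_\hbar-h^{-1}\lambda_n(\mathcal P_h))\tilde u_h\bigr\| = \mathcal O(h^\infty)\|\tilde u_h\|. \]
Combining with the quasi-orthogonality of the family $(\tilde u_h^{(n)})_{n\leq N(\mathcal P_h,E^+h)}$ and the polynomial bound of Proposition~\ref{prop:Nb-h} yields $\lambda_n(\widetilde{\mathcal N}_\hbar)\leq h^{-1}\lambda_n(\mathcal P_h)+\mathcal O(h^\infty)$. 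For the reverse inequality, I would run Agmon estimates directly on $\widetilde{\mathcal N}_\hbar$ along the lines of Proposition~\ref{prop:dec}, exploiting that the transverse symbol $(\theta-b_a\tilde t)^2$ grows at infinity and that the essential spectrum of $\widetilde{\mathcal N}_\hbar$ lies above $|a|>E^+$. This localizes its eigenfunctions to $|\tilde t|\lesssim \hbar^{-2\eta}$, and a symmetric cutoff-and-push-forward through $\Phi$ produces admissible trial functions for $\mathcal P_h$.

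\textbf{Main obstacle.} The technical heart of the argument is verifying that all three auxiliary perturbations contribute only $\mathcal O(h^\infty)$ errors to the quadratic forms, uniformly in $a\in[-1,a_0]$: the spatial cut $\chi(\mathrm{dist}(\cdot,\Gamma))$, whose commutator with the magnetic gradient is supported in the thin shell $\Gamma(\epsilon_0)\setminus\Gamma(\epsilon_0/2)$ where $u_h$ is exponentially small; the cutoff $c_\mu$ defining $\widetilde{\mathcal N}_\hbar$, active only where the rescaled eigenfunctions are exponentially small by the scale matching above; and the discrepancy between $m_\hbar\,d\tilde s\,d\tilde t$ and the flat measure, of relative size $\mathcal O(\hbar^{1-2\eta})$ on the relevant support, which is already absorbed by the conjugation $m_\hbar^{1/2}$ performed when passing to $\mathfrak N_{\hbar,\theta}$. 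Together with Corollary~\ref{cor.expdecay} and the rough bound $N(\mathcal P_h,E^+h)=\mathcal O(h^{-2})$ from Proposition~\ref{prop:Nb-h}, this will ensure the accumulated error stays $\mathcal O(h^\infty)$, concluding the proof.
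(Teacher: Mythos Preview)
Your reduction in the first paragraph is exactly the paper's argument: Proposition~\ref{prop:BHR2.7-bis} is presented in the paper as a mere restatement of Proposition~\ref{prop:BHR2.7} after the gauge change $e^{-im\pi s/L}$ and the unitary conjugation $u\mapsto m_\hbar^{1/2}u$, both of which you identify correctly. The paper gives no further proof for Proposition~\ref{prop:BHR2.7-bis} beyond this.

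Your second and third paragraphs go beyond the paper by sketching a proof of Proposition~\ref{prop:BHR2.7} itself, which the paper states without proof (the label suggests it is imported from \cite{BHR22}). Your sketch is the standard one and is consistent with the ingredients the paper lines up just before the statement: Corollary~\ref{cor.expdecay} for localization of eigenfunctions of $\mathcal P_h$, the remark that Agmon estimates analogous to Proposition~\ref{prop:dec} localize the eigenfunctions of $\widetilde{\mathcal N}_\hbar$ near $\tilde t=0$, and Proposition~\ref{prop:Nb-h} to control the number of quasimodes. One small slip: $\widetilde{\mathcal N}_\hbar$ has compact resolvent (the transverse potential $(b_a\tilde t)^2$ is confining and $s$ lives on a torus), so there is no essential spectrum to locate above $|a|$; this only simplifies your reverse-direction argument.
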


\section{A pseudodifferential operator with operator valued symbol}\label{sec:pseudo}

\subsection{Preliminaries}\label{s-sec.p}
 Let us briefly prove  that  an operator  given by \eqref{eq.ph-w-intro} with a $2L$ periodic symbol,  $p_{\hbar}(s+2L,\sigma)=p_{\hbar}(s,\sigma)$,    preserves $2L$-periodic distributions and  also locally square  integrable $2L$-periodic functions. More generally, it also preserves the set of functions
 \begin{equation}\label{eq.Fthetah}
 \mathscr{F}_{\hbar,\theta}:=\{u\in L^2_{loc}(\mathbb{R}) : u(s+2L)=e^{2i\theta L/\hbar}u(s)\}\,,
\end{equation}
equipped with the $L^2$-norm on a period $[-L,L)$. The operator ${\rm Op}_\hbar^w(p_\hbar)$ acts continuously on $\mathscr{F}_{\hbar,\theta}$. 
In fact, this is even true in the vector valued case where we replace $u\in L^2_{loc}(\mathbb{R})$ by $u\in L^2_{loc}(\mathbb{R};F)$ for some Hilbert space $F$ in the definition of $ \mathscr{F}_{\hbar,\theta}$.

Let us explain this for $\theta=0$.

  From the composition theorem for pseudodifferential operators (see \cite[Theorem 4.18]{Zworski}), we see that $\langle x\rangle^{-1}{\rm Op}_\hbar^w(p_\hbar)\langle x\rangle$ is a pseudodifferential operator with symbol in $S(1)$ (and thus it is bounded on $L^2(\R)$ thanks to the Calder\'on-Vaillancourt theorem, see \cite[Theorem 4.23]{Zworski}). This shows that ${\rm Op}_\hbar^w(p_\hbar)$ is bounded from $L^2(\R,\langle x\rangle^{-2}\mathrm{d}x)$ to $L^2(\R,\langle x\rangle^{-2}\mathrm{d}x)$. Notice that there exist $C_1(L)>0$, $C_2(L)>0$ and $C_3(L)>0$ such that for all $u\in L^2_{2L}(\R)$,
\begin{multline*}
\|u\|^2_{L^2(\R,\langle x\rangle^{-2}\mathrm{d}x)} \leq C_1(L)  \sum_{\ell \in \mathbb Z} \langle\ell\rangle^{-2} \| u\|^2_{L^2(2 \ell L  - L ,2\ell L + L)} \leq C_2(L) \|u\|^2_{L^2_{2L}(\R)}\\
\leq C_3(L)\|u\|^2_{L^2(\R,\langle x\rangle^{-2}\mathrm{d}x)} \,.
\end{multline*} 
Now  the operator $\mathfrak{N}_{\hbar,\theta}$ introduced in \eqref{eq:Nhc} (with $\mathcal T_{\hbar,\theta}$ introduced in \eqref{eq.Th}) can be seen as the action of an $\hbar$-pseudodifferential operator $\mathfrak{N}_\hbar$ with operator symbol $n_{\hbar}$ on $\mathscr{F}_{\hbar,\theta(\hbar)}$ where $\theta(\hbar)$ is defined in \eqref{eq:defthetb}. We have
\begin{equation}\label{eq:N0}
	\mathfrak N_{\hbar}=-\partial^2_t-\frac{(\partial_tm_\hbar)^2}{4m^2_\hbar}+\frac{\partial^2_tm_\hbar}{2m_\hbar}+\big(m_\hbar^{-1/2} \mathcal  T_{\hbar} m_\hbar^{-1/2}\big)^2\,,
\end{equation}
with
\[\mathcal T_{\hbar}= \hbar D_s-b_at+\hbar c_\mu\frac{k}{2}b_at^2\,.\]
We recall the classical notation for the Weyl quantization
\begin{equation}\label{eq:Nhc-symb}
\mathfrak{N}_\hbar=\mathrm{Op}^w_\hbar (n_{\hbar}) u (s)=\frac{1}{(2\pi\hbar)} \int_{\mathbb  R^2} e^{i(s-\tilde s)\cdot \sigma/\hbar} n_{\hbar} \left(\frac{s+\tilde s}{2}, \sigma\right)u(\tilde s)   \mathrm{d}\tilde s \mathrm{d}\sigma\,.
\end{equation}
Note that, by using the Floquet-Bloch transform, $\mathfrak{N}_\hbar$ is unitarily equivalent to the direct integral of the $\mathfrak{N}_{\hbar,\theta}$.

Let us explain why the operator $\mathfrak{N}_\hbar$ can be written under the form \eqref{eq:Nhc-symb}. Note already that, at a formal level, we expect that 
\[n_{\hbar}\simeq n_0=-\partial^2_t+(\sigma-b_a t)^2\,.\]
This formal principal symbol suggests to consider the set of operator-valued symbols (where lies $n_0$).  We denote by $S\big(\R^2,\mathscr{L}(B^2(\R)\times\C,L^2(\R)\times\C)\big)\,$ the class of symbols $\Psi$ on $\mathbb R^2$
with value in $\mathscr{L}(B^2(\R)\times\C,L^2(\R)\times\C)$,   such that, for all $j,k \in \mathbb N$, there exists $C_{j,k} >0$
 $$
 \| \partial_s^j\partial_\sigma^k \Psi (s,\sigma)\|_{\mathscr{L} (B^2(\mathbb R)\times \mathbb C),L^2(\mathbb R)\times \mathbb C))}\leq C_{j,k}\,,
 $$
where in the definition on the norms above the norm on $B^2(\R)$ is $(s,\sigma)$-dependent and  given by
\[
B^2(\mathbb R)\ni \psi \mapsto \|\psi\|^2_{B^2(\R),(s,\sigma)}=\|\partial^2_t\psi\|^2+\|\langle t^2\rangle\psi\|^2+\langle \sigma\rangle^4\|\psi\|^2\,.\]
Here we used the notation
\begin{equation}\label{eq:<u>}
\langle u\rangle=(1+|u|^2)^{1/2}\,.
\end{equation} 
Our symbols can be $\hbar$-dependent and in this case we impose above the uniformity of the constants with respect to $\hbar$.\/
The representation of  $\mathfrak{N}_{\hbar}$ as a pseudo-differential operator follows from the results of composition for operator symbols (see \cite[Theorem 2.1.12]{Keraval}) and by noticing that the symbol of \eqref{eq.Th} (obtained by replacing $\hbar D_s$ by $\sigma$) belongs to $S\big(\R^2,\mathscr{L}(B^1(\R)\times\C,L^2(\R)\times\C)\big)$ and also to $S\big(\R^2,\mathscr{L}(B^2(\R)\times\C,B^1(\R)\times\C)\big)$ (with  suitable $(s,\sigma)$-dependent norms on $B^k(\R)$
 extending the definition given above  of the norm on $B^2$).  Indeed, the function $t\mapsto\hbar b_a c_\mu t^2$ is bounded, uniformly in $\hbar$, since $\mu=\hbar^{2\eta}$ (for $\eta$ fixed small enough).

\begin{remark}
We recall that the operator and its Weyl symbol are related by the following exact formula (see  for instance   \cite[Theorems 4.19 \& 4.13]{Zworski} whose proof can be adapted to operator-valued symbols):
\[n_{\hbar}(s,\sigma)=e^{-i\frac\hbar2 D_s D_\sigma}\left[e^{-is\sigma/\hbar}\mathfrak{N}_{\hbar} (e^{i\cdot\sigma/\hbar})\right]\left(s,\sigma\right)\,,\]
where $e^{-i\frac\hbar2 D_s D_\sigma}$ is defined as a Fourier multiplier thanks to the Fourier transform with respect to $(s,\sigma)$.
\end{remark}

\subsection{Expansion of $\mathfrak{N}_{\hbar}$}
Let us now describe an expansion of $n_{\hbar}$ in powers of $\hbar$. We would like to write 
\begin{equation}\label{eq:exp-symb-Nhc}
	n_{\hbar}\simeq n_0+\hbar n_1+\hbar^2n_2+\ldots
\end{equation}
With this writing,  we mean an expansion of the associated operator $\mathfrak  N_{\hbar}$ of the following form
\begin{equation}\label{eq:ex-Nhc}
	\mathfrak{N}_{\hbar}=\mathfrak n_0+\hbar\mathfrak n_1+\hbar^2\mathfrak n_2+\mathcal \hbar^3\mathscr R_\hbar^{(3)}+\hbar w_\hbar\,,
\end{equation}
where, for some $N\in\mathbb {N}$, $C, \hbar_0>0$, we have, for all $\hbar\in(0,\hbar_0)$,

\begin{enumerate}[\rm (i)]
\item\label{eq.i} $w_{\hbar}$ is a smooth function supported in $\{(s,t) : C^{-1}\hbar^{-2\eta}\leq \langle t\rangle\leq C\hbar^{-2\eta}\}$ and such that $w_\hbar=\mathscr{O}(\langle t\rangle)$ ,
\item \label{eq.ii}$\mathscr R_\hbar^{(3)}$  is a pseudodifferential operator whose symbol belongs to a bounded set in $S(\R^2,\mathscr{L}(B^2(\R)\times\C,L^2(\R,\langle t\rangle^{-N}\mathrm{d}t)\times\C))$.
\end{enumerate}
Note that \eqref{eq:exp-symb-Nhc} does not mean  an expansion in the symbol class $S(\R^2,\mathscr{L}(B^2(\R)\times\C,L^2(\R)\times\C))$, where lies $n_{\hbar}$.   
We start by expanding the differential operator $\mathfrak{N}_{\hbar}$ (see \eqref{eq:Nhc}) with respect to $\hbar$, with $\mu$ (involved in the cutoff functions $c_\mu$) considered as a parameter\footnote{Note that $\hbar c_{\mu}(t)t$ converges to $0$ uniformly  as $\hbar$ tends to $0$ since $\mu=\hbar^{2\eta}$ and $\eta<1/2$.}.

In the following proposition, we describe the (symmetric) differential operators $\mathfrak{n}_j$.
\begin{proposition}
The decomposition \eqref{eq:ex-Nhc} holds with
	\begin{equation}\label{eq:ex-terms}
	\begin{aligned}
		\mathfrak n_0&=-\partial_t^2+\mathfrak{p}_0^2\,,\\
		\mathfrak n_1&=\mathfrak p_0\mathfrak p_1+\mathfrak p_1\mathfrak p_0\,,\\
		\mathfrak n_2&=\mathfrak p_0\mathfrak p_2+\mathfrak p_1^2+\mathfrak p_2\mathfrak p_0-c_\mu^2\frac{k^2}{4}\,,
	\end{aligned}
\end{equation}	
where 
	\begin{equation}\label{eq:D-theta}
		\begin{aligned}
	\mathfrak{p}_0&=\hbar D_s-b_a t\,,\\
		\mathfrak{p}_1 &=c_\mu t\Big(\frac{k}2\mathfrak p_0+\frac12\mathfrak  p_0k+\frac{k}2 b_at \Big)\,,\\
		\mathfrak{p}_2&=c_\mu^2t^2\Big(\frac{k^2}{2}\mathfrak p_0+\frac12\mathfrak p_0k^2+\frac{k^2}2b_at \Big)\,.
	\end{aligned}
\end{equation}
\end{proposition}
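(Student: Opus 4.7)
The plan is to begin from \eqref{eq:Nhc} and Taylor-expand the scalar factors $m_\hbar^{\pm 1/2}$ in powers of the small quantity
\begin{equation*}
q_\hbar(s,t) := \hbar\, c_\mu(t)\, t\, k(s)\,,
\end{equation*}
which is $\mathscr{O}(\hbar^{1-2\eta}) = o(1)$ on the support of $c_\mu$ since $\mu = \hbar^{2\eta}$ with $\eta<\tfrac12$. This yields, pointwise,
\begin{equation*}
m_\hbar^{-1/2} = 1 + \tfrac{1}{2} q_\hbar + \tfrac{3}{8} q_\hbar^2 + q_\hbar^3 r_\hbar\,,
\end{equation*}
with a smooth remainder $r_\hbar$ bounded uniformly in $\hbar$; off $\mathrm{supp}\,c_\mu$ the identity is trivial since $m_\hbar\equiv 1$. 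Inserting this expansion into \eqref{eq:Nhc} and collecting powers of $\hbar$ should produce the claimed principal, subprincipal and subsubprincipal terms $\mathfrak{n}_0,\mathfrak{n}_1,\mathfrak{n}_2$, plus two disjoint error sources: a Taylor remainder of size $\hbar^3$ (to be placed in $\hbar^3\mathscr R^{(3)}_\hbar$), and corrections concentrated in the transition annulus $\{1<|\mu t|<2\}$ coming from $c_\mu',c_\mu''$ (to be placed in $\hbar w_\hbar$).

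For the tangential part, the next step is to write $\mathcal T_{\hbar} = \mathfrak{p}_0 + \hbar\, c_\mu \tfrac{k}{2} b_a t^2$ and exploit that $\mathfrak{p}_0 = \hbar D_s - b_a(t)\,t$ commutes with multiplication by any function of $t$ alone. A careful bilinear expansion of $\mathfrak{R}_\hbar := m_\hbar^{-1/2}\mathcal T_\hbar m_\hbar^{-1/2}$ then gives $\mathfrak{R}_\hbar = \mathfrak{p}_0 + \hbar\, \mathfrak{p}_1 + \hbar^2\, \mathfrak{p}_2 + \hbar^3(\cdot)$: the expression for $\mathfrak{p}_1$ drops out directly from the $\hbar^1$-coefficient, while the $\hbar^2$-coefficient rearranges to $\mathfrak{p}_2$ using the symmetrization identity
\begin{equation*}
k\,\mathfrak{p}_0\, k = \tfrac{1}{2}\bigl(k^2 \mathfrak{p}_0 + \mathfrak{p}_0\, k^2\bigr)\,,
\end{equation*}
itself a consequence of $[\mathfrak{p}_0, k] = -i\hbar\, k'$ being a function of $s$ alone and hence commuting with $k$. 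Squaring $\mathfrak{R}_\hbar$ then yields the tangential contributions to $\mathfrak{n}_0,\mathfrak{n}_1,\mathfrak{n}_2$ exactly as in \eqref{eq:ex-terms}. For the transverse block \eqref{eq.transverse}, I would observe that on the set $\{|\mu t|\leq 1\}$ where $c_\mu\equiv 1$ one has $\partial_t^2 m_\hbar = 0$ and $(\partial_t m_\hbar)^2 = \hbar^2 k^2$, so that
\begin{equation*}
-\frac{(\partial_t m_\hbar)^2}{4 m_\hbar^2} + \frac{\partial_t^2 m_\hbar}{2 m_\hbar} = -\frac{\hbar^2 k^2}{4} + \mathscr{O}(\hbar^3)\,,
\end{equation*}
supplying exactly the $-\tfrac{c_\mu^2 k^2}{4}$ summand of $\mathfrak{n}_2$; the extension of this summand to all of $\R^2$ is legitimate since its discrepancy with the true transverse contribution is supported in the transition annulus and will be put in $\hbar w_\hbar$.

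It then remains to verify (i) and (ii). All terms stemming from $c_\mu'$ or $c_\mu''$ are supported in $\{\mu^{-1}\leq|t|\leq 2\mu^{-1}\} = \{C^{-1}\hbar^{-2\eta}\leq\langle t\rangle\leq C\hbar^{-2\eta}\}$ and carry at least one extra factor of $\hbar$ compared with the ambient scale, so after factoring $\hbar$ they give rise to a smooth $w_\hbar$ with the prescribed support and pointwise bound $w_\hbar = \mathscr{O}(\langle t\rangle)$, yielding (i). For (ii), the Taylor remainder $q_\hbar^3 r_\hbar$ and its interactions with $\mathcal T_\hbar$ carry a factor $(c_\mu t)^n k^n$ with $n\geq 3$, hence polynomial growth in $t$ of controlled degree; selecting $N$ large enough absorbs this growth in the weight $\langle t\rangle^{-N}$, and all derivatives in $(s,\sigma)$ stay uniformly bounded in $\hbar$, placing $\mathscr R^{(3)}_\hbar$ in the announced class. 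The main obstacle I anticipate is the bookkeeping: the non-commutativity of $\mathfrak{p}_0$ with $k(s)$ produces hidden $\hbar$-factors via $[\mathfrak{p}_0,k]=-i\hbar k'$ that must be tracked carefully to avoid misplacing a term, and the two interacting small scales $\hbar$ and $\mu=\hbar^{2\eta}$ require care in separating pure Taylor errors (destined for $\hbar^3\mathscr R^{(3)}_\hbar$) from cutoff-derivative errors (destined for $\hbar w_\hbar$).
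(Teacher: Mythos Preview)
Your proposal is correct and follows essentially the same approach as the paper: a Taylor expansion of the factors of $m_\hbar$ in \eqref{eq:Nhc}, separation of the remainders into a pure $\hbar^3$-tail and a cutoff-derivative tail supported in the transition annulus, and then squaring. The only organizational difference is that the paper first passes one factor of $m_\hbar^{-1/2}$ through $\mathfrak p_0$ to reduce to an expansion of $m_\hbar^{-1}$ and $m_\hbar^{-2}$ (picking up the commutator term $\hbar^2 m_\hbar^{-2}c_\mu t\,D_sk$ explicitly), and then \emph{rearranges} the resulting coefficients into the symmetric forms $\mathfrak p_1,\mathfrak p_2$, whereas you expand $m_\hbar^{-1/2}$ symmetrically on each side and obtain the symmetric $\mathfrak p_j$ directly via the identity $k\,\mathfrak p_0\,k=\tfrac12(k^2\mathfrak p_0+\mathfrak p_0 k^2)$; both routes yield the same terms and the same remainder structure.
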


\begin{proof}
Let us provide a Taylor expansion of \eqref{eq:Nhc}.  \eqref{eq.Th} can be rewritten in the form
	\[\mathcal T_{\hbar}=	\mathfrak{p}_0+\hbar c_\mu\frac{k}{2}b_at^2\,.\]
Straightforward  computations yield,
	\[\begin{aligned}
		m_\hbar^{-1/2}\mathcal T_{\hbar} m_\hbar^{-1/2}&=m_\hbar^{-1}\mathfrak p_0+m_\hbar^{-1/2}\big(\hbar D_s m_\hbar^{-1/2}\big)+\hbar c_\mu\frac{k}{2}b_at^2 m_\hbar^{-1}\\
		&=m_\hbar^{-1}\mathfrak p_0+\hbar^2 m_\hbar^{-2} c_\mu t(D_sk)+\hbar m_\hbar^{-1}c_\mu\frac{k}{2}b_at^2 \,.
	\end{aligned}\]
	Now  we expand $m_\hbar^{-1}$  in powers of $\hbar$  and get
	\[  m_\hbar^{-1}=1+\hbar  c_\mu tk+\hbar^2c_\mu^2t^2k^2+\hbar^3\frac{(c_\mu  t k)^3}{1-\hbar c_\mu t k}\,,\]
	so that
	\[\begin{aligned}
		\hbar^2m_\hbar^{-2}&=\hbar^2+\hbar^3\frac{2c_\mu tk}{1-\hbar c_\mu tk}+\hbar^4\frac{c_\mu^2t^2k^2}{(1-\hbar c_\mu tk )^2}\,,\\
		\quad
		\hbar m_\hbar^{-1}&=\hbar+\hbar^2  c_\mu tk+\hbar^3\frac{c_\mu^2t^2k^2}{1-\hbar c_\mu t k}\,. 
	\end{aligned}
	\]

	We  have the following expansion 
	\begin{multline*}
		m_\hbar^{-1/2} \mathcal  T_{\hbar} m_\hbar^{-1/2}=\\ =	\mathfrak{p}_0+\hbar 	kc_\mu t\Big(\mathfrak p_0+\frac12 b_at \Big)
		+\hbar^2	\left( k^2c_\mu^2t^2\Big(\mathfrak p_0+\frac12b_at \Big)+
		\frac12c_\mu t(D_sk)\right)\\
		+\frac{\mathcal \hbar^3c_\mu  t k}{1-\hbar c_\mu t k}\left( (c_\mu  t k)^2 \mathfrak p_0+\Big( 2  +\frac{\hbar c_\mu t k }{1-\hbar c_\mu tk }\Big)c_\mu t(D_sk)+
		\frac{2c_\mu^2 t^3k^2}{2}\right)\,.
	\end{multline*}
	The  previous  expression can be rearranged as follows
	\[
	m_\hbar^{-1/2} \mathcal  T_{\hbar} m_\hbar^{-1/2}=	\mathfrak{p}_0+\hbar 	\mathfrak{p}_1 +\hbar^2	\mathfrak{p}_2+\mathcal \hbar^3\mathscr R_\hbar\,.
	\]
	and
	\begin{equation}\label{eq.remainder}
		\begin{aligned}
			\mathscr R_\hbar&=
			-\hbar c_\mu^2t^2(D_sk)k\\
			&\qquad+\frac{c_\mu  t k}{1-\hbar c_\mu t k}\left( (c_\mu  t k)^2 \mathfrak p_0+\Big( 2  +\frac{\hbar c_\mu t k }{1-\hbar c_\mu tk }\Big)c_\mu t(D_sk)+
			\frac{2c_\mu^2 t^3k^2}{2}\right)\,.
		\end{aligned}
	\end{equation}
Recalling \eqref{eq.transverse}, we can also  expand the operator in the transversal variable and get
\begin{subequations}
	\begin{equation}\label{vhwha}
		-m^{-1/2}_{\hbar}\partial_t m_{\hbar}\partial_t m_{\hbar}^{-1/2}=-\partial_t^2-\hbar^2c^2_\mu\frac{k^2}{4} +\hbar^3v_\hbar
		+\hbar w_\hbar\,,
	\end{equation}
where the functions $v_\hbar$ and $w_\hbar$ satisfy, uniformly with respect to $s$,   and $\hbar$,
\begin{equation}\label{vhwhb}
v_\hbar(s,t)=\mathscr O\big( \langle t\rangle^4\big) \mbox{  and }   w_\hbar(s,t)=\mathscr O\big(| c_\mu''t|+|\langle t\rangle c_\mu'|\big)\,,\end{equation}
which gives in particular \eqref{eq.i}.
\end{subequations}

We get the expansion of the operator in \eqref{eq:ex-Nhc}
	and the remainder term is expressed via $\mathscr R_\hbar$ in \eqref{eq.remainder} as follows
	\begin{multline*}
		\mathscr R_\hbar^{(3)}=\mathfrak p_1\mathfrak p_2+\mathfrak p_2\mathfrak p_1 +\mathfrak p_0\mathscr R_\hbar +\mathscr R_\hbar\mathfrak p_0\\
		+\hbar\big( \mathfrak p_1\mathscr R_\hbar +\mathscr R_\hbar\mathfrak p_1+\mathfrak p_2^2\big)+\hbar^2\big(\mathfrak p_2\mathscr R_\hbar +\mathscr R_\hbar\mathfrak p_2 \big)
		+\mathscr O\big(\hbar^3\langle t\rangle^4\big)\,.
	\end{multline*}
 We see that the remainder $\mathscr R_\hbar^{(3)}$ satisfies \eqref{eq.ii}.
\end{proof}
We can now establish an expansion of the form \eqref{eq:exp-symb-Nhc} by considering the Weyl symbols of the $\mathfrak{p}_j$ in \eqref{eq:D-theta} (and the composition of pseudodifferential operators). We get the decomposition
\begin{subequations}\label{eq.nj}
\begin{equation}
n_{\hbar}=n_0+\hbar n_1+\hbar^2 n_2+\hbar^3 r_{3,\hbar}+w_\hbar\,,
\end{equation}
where
\begin{equation}\label{eq.n0,1,2}
\begin{split}
	n_0(s,\sigma)&=-\partial^2_t+(\sigma-b_a t)^2\,,\\
	n_1(s,\sigma)&=c_\mu k(s)\big(2t(\sigma-b_a t)^2+b_a t^2(\sigma-b_a t)\big)\,,\\
	n_2(s,\sigma)&= c^2_\mu k(s)^2\Big( 3t^2 (\sigma-b_a t)^2 +2 b_at^3 (\sigma-b_a t)+\frac14 b_a^2t^4 \Big) 
	-c_\mu^2\frac{k(s)^2}4\,,
\end{split}
\end{equation}
\begin{equation} r_{\hbar,3}\in S(\R^2,\mathscr{L}(B^2(\R)\times\C,L^2(\R,\langle t\rangle^{-N}\mathrm{d}t)\times\C))\end{equation}   and $w_\hbar$ is introduced in (\ref{vhwhb}). 
\end{subequations}

\section{The Grushin reduction}\label{sec:Gr}
 Instead of the operator $\mathfrak{N}_\hbar$, we consider its truncated version defined by 
\begin{equation}\label{eq:op-c-N}
	\mathfrak{N}^c_{\hbar}= \mathrm{Op}^w_\hbar (n^c_{\hbar})\,,\quad n^c_{\hbar,0}(s,\sigma)= n_{\hbar }(s,\chi_1(\sigma))\,,
\end{equation}
where  $\chi_1$ is defined in Section \ref{sec.chi1}.

Consider the operator symbol, for all $z\in[0,E]$ and $E<E^+<|a|$. 
\begin{equation}\label{eq:Gr1}
\mathscr{P}_{\hbar,z}(s,\sigma)=\begin{pmatrix}
n^c_{\hbar}-z&\Pi^*_\sigma\\
\Pi_\sigma&0
\end{pmatrix}=\mathscr{P}_{0,z}+\hbar\mathscr{P}_1+\hbar^2\mathscr{P}_2+\ldots\,,\end{equation}
where,  $\Pi_{\sigma}=\langle \cdot,\overset{\circ}{u}_\sigma \rangle$ and for all $j\geq 1$,
\begin{equation}\label{eq:Gr2}
\mathscr{P}_j=\begin{pmatrix}
 n^c_j&0\\
0&0
\end{pmatrix}\,,\quad  n_j^c(s,\sigma)=n_j\big(s,\chi_1(\sigma)\big)\,.
\end{equation}
The operator $\mathscr  P_{0,z}$ is   introduced  in  Proposition~\ref{prop.Grushin0}.  Recall that  it is bijective (since $z\in[0,E]$)  and  
\begin{equation}\label{eq:Gr3}
\mathscr  P_{0,z}^{-1}=\mathscr Q_{0,z}= \left(\begin{array}{cc}
q_{0,z}&q_{0,z}^+\\
q_{0,z}^-&q_{0,z}^\pm
\end{array}\right)
\end{equation}
is explicitly given in  Proposition~\ref{prop.Grushin0}.

\begin{proposition}\label{prop.param}
Consider
\[\mathscr{Q}_{1,z}=-\mathscr{Q}_{0,z}\mathscr{P}_1\mathscr{Q}_{0,z}=  \left(\begin{array}{cc}
q_{1,z}&q_{1,z}^+\\
q_{1,z}^-&q_{1,z}^\pm
\end{array}\right)\]
and
\[\mathscr{Q}_{2,z}= -\mathscr Q_1\mathscr P_1\mathscr Q_0-\mathscr Q_0\mathscr P_2\mathscr Q_0=\left(\begin{array}{cc}
q_{2,z}&q_{2,z}^+\\
q_{2,z}^-&q_{2,z}^\pm
\end{array}\right)\,.\]
We let
\[\mathscr{Q}_{\hbar}(z)=\mathscr{Q}_{0,z}+\hbar\mathscr{Q}_{1,z}+\hbar^2\mathscr{Q}_{2,z}
=\left(\begin{array}{cc}
q_{\hbar,z}&q^+_{\hbar,z}\\
q^-_{\hbar,z}&q^\pm_{\hbar,z}
\end{array}\right)\,. \]
Then,  
\begin{align*}
\mathrm{Op}^w_\hbar (\mathscr{Q}_\hbar(z))\,  \mathrm{Op}^w_\hbar (\mathscr{P}_\hbar(z)) &=\mathrm{Id}+\hbar^3\mathscr E_{\hbar,l}\,,\\
\mathrm{Op}^w_\hbar(\mathscr{P}_\hbar(z))\, \mathrm{Op}^w_\hbar( \mathscr{Q}_\hbar(z)) &=\mathrm{Id}+ \hbar^3\mathscr E_{\hbar,r}\,,\end{align*}
where  $\mathscr E_{\hbar,l/r}$ is a pseudodifferential operator, whose operator-valued symbol belongs to the class $S\big(\R^2,\mathscr{L}\big(L^2(\R)\times\C, L^2(\R,\langle t\rangle^{-N}\dd t)\times\C\big)\big)$, uniformly in $\hbar$, for some $N\in\mathbb{N}$ independent of $\hbar$,

\end{proposition}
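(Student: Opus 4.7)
The strategy is a formal Neumann-type inversion at the level of operator-valued symbols, exploiting the explicit invertibility of the principal symbol $\mathscr{P}_{0,z}$ granted by Proposition~\ref{prop.Grushin0}. Composition of $\hbar$-pseudodifferential operators with operator-valued symbols is governed by the Moyal star product $\#_\hbar$, as developed in \cite[Thm.~2.1.12]{Keraval}, whose asymptotic expansion in $\hbar$ has as coefficients bi-differential operators in $(s,\sigma)$ evaluated on the two factors. I would identify the desired identity with the symbol-level statement $\mathscr{Q}_\hbar \#_\hbar \mathscr{P}_\hbar = \mathrm{Id} + O(\hbar^3)$ (in the weighted class appearing in the proposition), expand in powers of $\hbar$, and check that the coefficients of $\hbar^0$, $\hbar^1$, $\hbar^2$ vanish.

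At order $\hbar^0$, Proposition~\ref{prop.Grushin0} gives $\mathscr{Q}_0 \mathscr{P}_0 = \mathrm{Id}$. At order $\hbar^1$ the pointwise product contribution reads $\mathscr{Q}_0 \mathscr{P}_1 + \mathscr{Q}_1 \mathscr{P}_0$, which vanishes since multiplying $\mathscr{Q}_1 = -\mathscr{Q}_0 \mathscr{P}_1 \mathscr{Q}_0$ on the right by $\mathscr{P}_0$ and using $\mathscr{Q}_0 \mathscr{P}_0 = \mathrm{Id}$ gives $\mathscr{Q}_1 \mathscr{P}_0 = -\mathscr{Q}_0 \mathscr{P}_1$; moreover the Poisson-bracket correction for the pair $(\mathscr{Q}_0, \mathscr{P}_0)$ is identically zero because both symbols depend only on $\sigma$. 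At order $\hbar^2$ the analogous algebraic cancellation afforded by the definition of $\mathscr{Q}_2$ kills the leading products $\mathscr{Q}_0 \mathscr{P}_2 + \mathscr{Q}_1 \mathscr{P}_1 + \mathscr{Q}_2 \mathscr{P}_0$; the second-order Moyal term involving only $\mathscr{P}_0$ and $\mathscr{Q}_0$ again vanishes by $s$-independence, and the remaining first-order mixed brackets involving $(\mathscr{Q}_0, \mathscr{P}_1)$ and $(\mathscr{Q}_1, \mathscr{P}_0)$, together with all Moyal contributions of order three and beyond and the contributions of $\hbar^3 r_{3,\hbar}$ and of $w_\hbar$ from \eqref{eq.nj}, are grouped into the remainder $\hbar^3 \mathscr{E}_{\hbar,l}$.

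The final step is to certify that $\mathscr{E}_{\hbar,l}$ actually lies in the stated class $S(\R^2, \mathscr{L}(L^2(\R)\times\C, L^2(\R,\langle t\rangle^{-N}\dd t)\times\C))$. The key inputs are: the pointwise bounds on $r_{3,\hbar}$ and $w_\hbar$ recorded just after \eqref{eq.nj} (in particular the $t$-support and polynomial growth of $w_\hbar$); the exponential decay in $t$, uniform in $\sigma$ on compact sets, of the ground state $\overset{\circ}{u}_\sigma$, which controls $\Pi$, $\Pi^*$, and the regularized resolvent $\overset{\circ}{\mathfrak R}_{0,z}$ entering $\mathscr{Q}_0$; the polynomial-in-$t$ structure of $n_1, n_2$ from \eqref{eq.n0,1,2}, compensated at each composition step by $\mathscr{Q}_0$; and the composition and derivative estimates of \cite{Keraval}, which propagate uniform symbol bounds in $(s,\sigma)$ through all Moyal products. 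The right-inverse identity is obtained symmetrically, either by repeating the argument with left and right swapped or by formal adjoints in the natural Hilbert-space pairing on $L^2(\R)\times\C$.

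The main obstacle is exactly the symbol-class bookkeeping: each application of $\mathscr{P}_j$ for $j \ge 1$ introduces factors polynomial in $t$, which must be absorbed by decay-restoring factors such as $\Pi^*$ or $\overset{\circ}{\mathfrak R}_{0,z}$ at every step of the Moyal expansion, and all bounds must remain uniform in $\sigma \in \R$, in $z \in [0,E]$, and in $a \in [-1, a_0]$. The weighted target space $L^2(\R, \langle t\rangle^{-N}\dd t)$ in the statement is exactly calibrated to accommodate the unavoidable polynomial losses in $t$ coming from the outermost application of $\mathscr{P}_1$ or $\mathscr{P}_2$ in the remainder.
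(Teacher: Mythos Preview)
The paper gives no proof of this proposition, treating it as a standard Grushin parametrix construction; your Neumann--Moyal strategy is exactly the intended one, and the cancellations you record at orders $\hbar^0$ and $\hbar^1$ are correct.

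There is, however, a genuine gap at order $\hbar^2$. You write that ``the remaining first-order mixed brackets involving $(\mathscr{Q}_0,\mathscr{P}_1)$ and $(\mathscr{Q}_1,\mathscr{P}_0)$ \dots\ are grouped into the remainder $\hbar^3\mathscr{E}_{\hbar,l}$'', but these are $\hbar^2$ contributions, not $\hbar^3$, and they do not vanish. Using $\partial_\sigma\mathscr{Q}_0=-\mathscr{Q}_0(\partial_\sigma\mathscr{P}_0)\mathscr{Q}_0$ and $\partial_s\mathscr{Q}_1=-\mathscr{Q}_0(\partial_s\mathscr{P}_1)\mathscr{Q}_0$, one finds
\[
C_1(\mathscr{Q}_0,\mathscr{P}_1)+C_1(\mathscr{Q}_1,\mathscr{P}_0)
=\tfrac{1}{2i}\,\mathscr{Q}_0\Bigl[(\partial_s\mathscr{P}_1)\mathscr{Q}_0(\partial_\sigma\mathscr{P}_0)-(\partial_\sigma\mathscr{P}_0)\mathscr{Q}_0(\partial_s\mathscr{P}_1)\Bigr],
\]
whose $(2,2)$ entry equals $\tfrac{1}{2i}\,\Pi\,(\partial_s n_1^c)\,q_{0,z}\,(\partial_\sigma\Pi^*)=\tfrac{1}{2i}\,\langle(\partial_s n_1^c)\,q_{0,z}\,\partial_\sigma\overset{\circ}{u}_\sigma,\overset{\circ}{u}_\sigma\rangle$. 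Since $\partial_s n_1^c$ is proportional to $k'(s)$ and $\partial_\sigma\overset{\circ}{u}_\sigma\neq 0$, this is generically nonzero. Hence, as written, your remainder is only $O(\hbar^2)$, not $O(\hbar^3)$.

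The fix is to absorb these Moyal terms into the definition of $\mathscr{Q}_{2,z}$: in the standard construction one sets
\[
\mathscr{Q}_{2,z}=-\mathscr{Q}_1\mathscr{P}_1\mathscr{Q}_0-\mathscr{Q}_0\mathscr{P}_2\mathscr{Q}_0-\bigl(C_1(\mathscr{Q}_0,\mathscr{P}_1)+C_1(\mathscr{Q}_1,\mathscr{P}_0)\bigr)\mathscr{Q}_0,
\]
after which the $\hbar^2$ level genuinely cancels and the $\hbar^3$ remainder claim goes through. The paper's displayed $\mathscr{Q}_{2,z}$ and the formula \eqref{eq:param2} for $q_{2,z}^\pm$ record only the pointwise part; either the omitted Moyal correction must be added, or one must argue separately that it does not affect the downstream applications (which only use $q^\pm_{\hbar,z}$ evaluated near $\sigma(a)$ and to accuracy $o(\hbar^2)$). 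Your proof needs one of these two steps to close.
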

The coefficients appearing in Proposition~\ref{prop.param} can be computed explicitly.  Of particular importance to us is 
\begin{equation}\label{eq:param1}
q^\pm_{\hbar,z}(s,\sigma)=z- \overset{\circ}{\mu}_a(\sigma)+\hbar  q_{1}^\pm(s,\sigma)+\hbar^2 q^\pm_{2,z}(s,\sigma)\,,
\end{equation}
where
\begin{align}
  q_1^\pm(s,\sigma)&=-\langle n_1(s,\chi_1(\sigma))\overset{\circ}{u}_{\sigma},\overset{\circ}{u}_{\sigma}\rangle\,, \label{eq:param20}\\
  q_{2,z}^\pm(s,\sigma)&=\langle q_{0,z}n_1(s,\chi_1(\sigma))\overset{\circ}{u}_{\sigma},n_1(s,\chi_1(\sigma))\overset{\circ}{u}_{\sigma}\rangle-\langle n_2(s,\chi_1(\sigma))\overset{\circ}{u}_{\sigma},\overset{\circ}{u}_{\sigma}\rangle\,.\label{eq:param2}
 \end{align}
Here $\overset{\circ}{u}_\sigma$ is  the positive ground state of the operator  in \eqref{eq:circ-n} and $n_0,n_1,n_2$ are introduced in \eqref{eq.n0,1,2}.

\begin{proposition}\label{cor.1}
Writing 
\[
\mathrm{Op}^w_\hbar\big(\mathscr{Q}_\hbar(z)\big)=\begin{pmatrix}
	Q_\hbar&Q^+_\hbar\\
	Q^-_\hbar&Q^\pm_\hbar
\end{pmatrix}\,,\qquad \mathfrak{P}_\hbar= \mathrm{Op}^w_\hbar(\Pi)\,,\]
we have
\begin{equation}\label{eq:cor.1-1}
Q_{\hbar}(\mathfrak{N}_{\hbar}^c-z)+Q^+_\hbar\mathfrak{P}_\hbar=\mathrm{Id}+{\hbar^3\mathscr R_{\hbar}^+}\,,\quad Q^-_\hbar(\mathfrak{N}_{\hbar}^c-z)+Q^\pm_\hbar\mathfrak{P}_\hbar=\hbar^3\mathscr R_{\hbar}^\pm\,,
\end{equation}
\begin{equation}\label{eq:cor.1-2}Q^-_\hbar=\mathfrak{P}_\hbar+{ \hbar\mathscr E_\hbar^-}\,,\quad Q^+_\hbar=\mathfrak{P}^*_\hbar+\hbar\mathscr E_\hbar^+\,,
\end{equation}
where $ \mathscr R_{\hbar}^+,\mathscr R_{\hbar}^\pm$ are pseudodifferential operators whose symbols belong to the class  $S\big(\R^2,\mathscr{L}\big(L^2(\R)\times\C, L^2(\R, \langle t\rangle^{-N}\mathrm{d}t)\times\C\big)\big)$, and where $\mathscr E_{\hbar}^-,\mathscr E_{\hbar}^+$ are pseudodifferential operators whose symbols belong to the class  $S\big(\R^2,\mathscr{L}\big(L^2(\R)\times\C, L^2(\R)\times\C\big)\big)$, uniformly in $\hbar$.
\end{proposition}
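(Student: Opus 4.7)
The plan is to deduce Proposition~\ref{cor.1} directly from the parametrix identity
\[
\mathrm{Op}^w_\hbar(\mathscr Q_\hbar(z))\,\mathrm{Op}^w_\hbar(\mathscr P_\hbar(z))=\mathrm{Id}+\hbar^3\mathscr E_{\hbar,l}
\]
proved in Proposition~\ref{prop.param}, by writing the composition block by block. First I observe that, since Weyl quantization satisfies $\mathrm{Op}^w_\hbar(a)^*=\mathrm{Op}^w_\hbar(a^*)$ also for operator-valued symbols, the quantization of the symbol $\mathscr P_{\hbar,z}$ in \eqref{eq:Gr1} reads
\[
\mathrm{Op}^w_\hbar(\mathscr P_\hbar(z))=\begin{pmatrix}\mathfrak N_\hbar^c-z & \mathfrak P_\hbar^*\\ \mathfrak P_\hbar & 0\end{pmatrix},
\]
where I used that $\mathrm{Op}^w_\hbar(\Pi^*)=\mathrm{Op}^w_\hbar(\Pi)^*=\mathfrak P_\hbar^*$.

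Multiplying the $2\times 2$ block matrices, the $(1,1)$ and $(2,1)$ entries of $\mathrm{Op}^w_\hbar(\mathscr Q_\hbar)\,\mathrm{Op}^w_\hbar(\mathscr P_\hbar)$ yield
\[
Q_\hbar(\mathfrak N_\hbar^c-z)+Q_\hbar^+\mathfrak P_\hbar,\qquad Q_\hbar^-(\mathfrak N_\hbar^c-z)+Q_\hbar^\pm\mathfrak P_\hbar,
\]
which, by Proposition~\ref{prop.param}, equal respectively $\mathrm{Id}+\hbar^3\mathscr R_\hbar^+$ and $\hbar^3\mathscr R_\hbar^\pm$, where $\mathscr R_\hbar^+$ and $\mathscr R_\hbar^\pm$ are the corresponding blocks of $\mathscr E_{\hbar,l}$. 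The symbol class statement for these remainders is inherited line-by-line from the one in Proposition~\ref{prop.param}, giving \eqref{eq:cor.1-1}.

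For \eqref{eq:cor.1-2}, I use the explicit expansion $\mathscr Q_\hbar(z)=\mathscr Q_{0,z}+\hbar\mathscr Q_{1,z}+\hbar^2\mathscr Q_{2,z}$. From Proposition~\ref{prop.Grushin0}, the principal symbol $\mathscr Q_{0,z}$ has lower-left block $\Pi$ and upper-right block $\Pi^*$, and the subprincipal symbols $\mathscr Q_{1,z},\mathscr Q_{2,z}$ belong by construction to the bounded symbol class $S(\R^2,\mathscr L(L^2(\R)\times\C,B^2(\R)\times\C))$ uniformly in $\hbar$. Quantizing and reading off the corresponding blocks gives
\[
Q_\hbar^-=\mathrm{Op}^w_\hbar(\Pi)+\hbar\,\mathrm{Op}^w_\hbar(q^-_{1,z}+\hbar q^-_{2,z})=\mathfrak P_\hbar+\hbar\mathscr E_\hbar^-,
\]
and similarly $Q_\hbar^+=\mathfrak P_\hbar^*+\hbar\mathscr E_\hbar^+$, with $\mathscr E_\hbar^{\pm}$ pseudodifferential operators of the announced class.

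The only non-routine point is the bookkeeping of the functional classes for the $\hbar^3$ remainders: the composition formula of operator-valued pseudodifferential calculus used in Proposition~\ref{prop.param} produces symbols valued in $\mathscr L(B^2(\R)\times\C,L^2(\R,\langle t\rangle^{-N}\mathrm d t)\times\C)$ rather than in $\mathscr L(L^2\times\C,L^2\times\C)$, because the factor $n_1$, $n_2$ in \eqref{eq.n0,1,2} contain powers of $t$, and the $t$-decay needed to compensate them comes from the regularized resolvent $\overset{\circ}{\mathfrak R}_{0,z}$ and from the ground state $\overset{\circ}{u}_\sigma$. This is precisely the source of the weighted space $L^2(\R,\langle t\rangle^{-N}\mathrm d t)$ appearing in the statement, and this is the only delicate verification; everything else is a direct block computation.
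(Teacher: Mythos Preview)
Your argument is correct and is precisely the intended one: the paper states Proposition~\ref{cor.1} as an immediate consequence of Proposition~\ref{prop.param} without giving any further proof, and the derivation is exactly the block-by-block reading of the parametrix identity that you carry out. Your observations on the symbol classes (in particular the appearance of the weighted space $L^2(\R,\langle t\rangle^{-N}\mathrm d t)$) are also in line with the paper's treatment of the remainders.
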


\section{Spectral applications}

\subsection{Localization of the eigenfunctions of $\mathfrak{N}_{\hbar,\theta}$}
In order to perform the spectral analysis of $\mathfrak{N}_{\hbar,\theta}$, we need to prove that its eigenfunctions (associated with eigenvalues in $[0,E^+]$) are $\hbar$-microlocalized, with respect to $\sigma+\theta$ in
\[\{\varsigma\in\R : \mu_a(\varsigma)\leq E^++\epsilon\}\,,\quad \mbox{ with }  \epsilon>0\, \mbox{ such that }E^++\epsilon<a\,.\]
This can be formulated in terms of the semiclassical wavefront/frequency set (see \cite[Sec.~8.4.2, ~p.188]{Zworski}),  however we write a stronger estimate in Proposition \ref{prop:mlc} below which holds uniformly with respect to $\theta\in\mathbb{R}$.  This is a consequence of  the behavior of the principal operator symbol $n_{0,\theta}=-\partial^2_t+(\sigma+\theta+b_a t)^2$ (which appears after the Bloch-Floquet transform), which is bounded from below by $\mu_a(\sigma+\theta)$.  

The following  estimate holds (see \cite[Section 5]{BHR22} where similar considerations are described in detail).
\begin{proposition}\label{prop:mlc}
	Consider a smooth function $\chi$ that equals $1$ away from $\{ \mu_a\leq E^++\epsilon\}$ and $0$ on $\{\mu_a\leq E^++\frac{\epsilon}{2}\}$. Then, for any $\theta\in\mathbb{R}$ and any normalized eigenfunction $\psi$ of  the  operator $\mathfrak N_{\hbar, \theta}$  associated with an eigenvalue in $[0,E^+]$,  we have
	\begin{equation}\label{eq.chiM}
		{\rm Op}^w_\hbar ( \chi(\cdot+\theta))\psi=\mathscr{O}(\hbar^\infty)\,,
	\end{equation}
	uniformly with respect to $\theta\in\mathbb{R}$,  where $\mathscr{O}(\hbar^\infty)$ holds in the sense of the norm $u\mapsto \|\langle t\rangle^2 u\|_{H^2((\R/2L\Z)\times\R_+)}$.  In addition,  \eqref{eq.chiM} also holds for all normalized $\psi\in \mathrm{Ran}\mathds{1}_{[0,E^+]}(\mathfrak{N}_{\hbar,\theta})$. 
\end{proposition}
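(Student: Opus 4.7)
The plan is to construct an operator-valued semiclassical parametrix for $\mathfrak{N}_{\hbar,\theta}-\lambda$ microlocalized to the phase-space region $\{\sigma+\theta\in\mathrm{supp}(\chi)\}$, exploiting the uniform ellipticity of its principal symbol there. The principal operator-valued symbol of $\mathfrak{N}_{\hbar,\theta}$, obtained from \eqref{eq.nj} after incorporating the $\theta$-shift, is
\[n_{0,\theta}(s,\sigma)=-\partial_t^2+(\sigma+\theta-b_a(t)t)^2=\mathfrak{h}_a[\sigma+\theta],\]
whose lowest eigenvalue on $L^2(\R_t)$ equals $\mu_a(\sigma+\theta)$ and is separated from the rest of the spectrum by a uniform gap thanks to Lemma~\ref{lem.mu2}. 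On $\mathrm{supp}(\chi(\cdot+\theta))$ one has $\mu_a(\sigma+\theta)\geq E^{+}+\tfrac{\epsilon}{2}$, so for every $\lambda\in[0,E^{+}]$ the operator $n_{0,\theta}(s,\sigma)-\lambda$ is bounded below by $\epsilon/2$ on $B^2(\R)$ and hence invertible from $L^2(\R)$ to $B^2(\R)$ with norm uniformly bounded in $(s,\sigma,\theta,\lambda)$.

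Fix a smooth $\tilde\chi$ with $\tilde\chi\chi=\chi$ and $\mathrm{supp}(\tilde\chi)\subset\{\mu_a>E^{+}+\tfrac{\epsilon}{4}\}$, and set
\[q_{0,\lambda}(s,\sigma;\theta)=\tilde\chi(\sigma+\theta)\bigl(n_{0,\theta}(s,\sigma)-\lambda\bigr)^{-1}\in S\bigl(\R^2;\mathscr{L}(L^2(\R),B^2(\R))\bigr),\]
uniformly in $(\theta,\lambda)\in\R\times[0,E^{+}]$. The operator-valued Weyl calculus of \cite{Keraval} together with the expansion \eqref{eq:ex-Nhc} yields
\[\mathrm{Op}^w_\hbar(q_{0,\lambda})\bigl(\mathfrak{N}_{\hbar,\theta}-\lambda\bigr)=\mathrm{Op}^w_\hbar(\tilde\chi(\cdot+\theta))+\hbar\,\mathrm{Op}^w_\hbar(e_{1,\lambda}),\]
with $e_{1,\lambda}$ still supported in the elliptic region. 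A Neumann iteration in powers of $\hbar$, in the spirit of the Grushin construction of Section~\ref{sec:Gr} but restricted to this elliptic region, produces, for each $N\in\N$, an operator $Q_{\hbar,\lambda,N}=\mathrm{Op}^w_\hbar\bigl(\sum_{j=0}^{N}\hbar^{j}q_{j,\lambda}\bigr)$ with
\[Q_{\hbar,\lambda,N}\bigl(\mathfrak{N}_{\hbar,\theta}-\lambda\bigr)=\mathrm{Op}^w_\hbar(\chi(\cdot+\theta))+\hbar^{N+1}R_{\hbar,N,\lambda},\]
where $R_{\hbar,N,\lambda}$ is bounded from $L^2((\R/2L\Z)\times\R)$ into $\langle t\rangle^{-N}H^2$ uniformly in $(\hbar,\theta,\lambda)\in(0,\hbar_0]\times\R\times[0,E^{+}]$.

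Applying this identity to a normalized eigenfunction $\psi$ with $(\mathfrak{N}_{\hbar,\theta}-\lambda)\psi=0$, $\lambda\in[0,E^{+}]$, gives directly
\[\mathrm{Op}^w_\hbar(\chi(\cdot+\theta))\psi=-\hbar^{N+1}R_{\hbar,N,\lambda}\psi=\mathscr{O}(\hbar^{N+1})\text{ in }\langle t\rangle^{-N}H^2,\]
and choosing $N\geq 2$, the pointwise inequality $\langle t\rangle^{2}\leq\langle t\rangle^{N}$ converts this into an $\mathscr{O}(\hbar^{N+1})$ bound in the target norm $\|\langle t\rangle^{2}\cdot\|_{H^2}$. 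As $N$ is arbitrary, this is the claimed $\mathscr{O}(\hbar^{\infty})$ estimate, uniform in $\theta\in\R$. The extension to an arbitrary $\psi\in\mathrm{Ran}\,\mathds{1}_{[0,E^{+}]}(\mathfrak{N}_{\hbar,\theta})$ follows from the uniformity in $\lambda$ by decomposing $\psi$ on an orthonormal basis of such eigenfunctions, whose total number is polynomially controlled by Proposition~\ref{prop:Nb-h}; equivalently, one may insert a Helffer--Sj\"ostrand almost-analytic representation of a cutoff equal to $1$ on $[0,E^{+}]$.

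The main difficulty lies in the operator-valued symbolic bookkeeping of the Neumann iteration: each composition with $\mathfrak{N}_{\hbar,\theta}-\lambda$ must yield a remainder gaining a full extra power of $\hbar$ while staying in the correct symbol class with the $(s,\sigma)$-dependent $B^2(\R)$ norms, and the $t$-growth injected by the subprincipal terms $\hbar n_1$, $\hbar^{2}n_2$, $\hbar^{3}r_{3,\hbar}$, $w_\hbar$ from \eqref{eq.nj} must be absorbed by increasingly negative weights $\langle t\rangle^{-N}$ in the target space. The key structural point that makes everything uniform in $\theta$ is that the ellipticity gap on $\mathrm{supp}(\tilde\chi(\cdot+\theta))$ depends only on the fixed $\epsilon>0$ and not on $\theta$, so that the inverse symbol $(n_{0,\theta}-\lambda)^{-1}$ is bounded on the relevant weighted spaces with constants independent of both $\theta\in\R$ and $\lambda\in[0,E^{+}]$.
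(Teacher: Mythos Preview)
The paper does not prove this proposition itself; it simply states the result and refers to \cite[Section~5]{BHR22} for the details. Your elliptic-parametrix construction in the operator-valued $\hbar$-calculus is exactly the standard mechanism behind such microlocal smallness estimates and is almost certainly what that reference contains. Your identification of the key structural point---that the ellipticity gap on $\mathrm{supp}\,\tilde\chi(\cdot+\theta)$ depends only on the fixed $\epsilon$, giving the uniformity in $\theta$---matches the paper's one-line justification preceding the proposition.

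There is, however, a slip in the weight bookkeeping. You claim the remainder $R_{\hbar,N,\lambda}$ maps $L^2$ into $\langle t\rangle^{-N}H^2$ and then invoke $\langle t\rangle^{2}\leq\langle t\rangle^{N}$ to pass to the target norm $\|\langle t\rangle^{2}\,\cdot\,\|_{H^2}$. But, as your own final paragraph correctly acknowledges, the subprincipal symbols $n_1,n_2,\ldots$ carry polynomial growth in $t$, so the remainder lands in a space \emph{allowing growth}, namely $L^2(\langle t\rangle^{-N}\,\dd t)$ in the paper's convention (this is precisely property~(ii) after \eqref{eq:ex-Nhc}); the inequality then points the wrong way and the passage to $\|\langle t\rangle^{2}\,\cdot\,\|_{H^2}$ does not follow. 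The repair is to use the exponential $t$-decay of eigenfunctions (Proposition~\ref{prop:dec}, transported to $\mathfrak N_{\hbar,\theta}$): since $\|\langle t\rangle^{M}\psi\|=\mathscr O(1)$ for every fixed $M$, any polynomial $t$-growth in the remainder symbol is absorbed by $\psi$, giving $\|R_{\hbar,N,\lambda}\psi\|_{L^2}=\mathscr O(1)$ and hence $\|\mathrm{Op}^w_\hbar(\chi(\cdot+\theta))\psi\|_{L^2}=\mathscr O(\hbar^{N+1})$. The upgrade to the weighted $H^2$ norm is then a separate step, using that $\mathrm{Op}^w_\hbar(\chi(\cdot+\theta))$ acts in $s$ only (hence commutes with $\langle t\rangle^{2}$ and $\partial_t$) together with the eigenvalue equation and Agmon decay once more.
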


Let us consider the operator $\mathfrak{N}^c_{\hbar,\theta}$ (with periodic boundary conditions) defined as the operator induced by   $\mathfrak{N}^c_{\hbar}$ on $\mathscr{F}_{\hbar, \theta} $ (defined in \eqref{eq.Fthetah}). By using  Proposition \ref{prop:mlc} and the min-max theorem, we get the following.
\begin{proposition}\label{prop.coincide}
	The spectra of $\mathfrak{N}_{\hbar,\theta}$ and $\mathfrak{N}^c_{\hbar,\theta}$ in $[0,E_+]$ coincide (with multiplicity) modulo $\mathscr{O}(\hbar^\infty)$,  uniformly with respect to $\theta\in\mathbb{R}$.   More precisely,  for all $N\geq 1$, there exist $\hbar_0, C>0$ such that, for all $\theta\in\mathbb{R}$ and all $\hbar\in(0,\hbar_0)$ and all $k\geq 1$ such that $\lambda_k(\mathfrak{N}_{\hbar,\theta})\leq E_+$, we have
	\[|\lambda_k(\mathfrak{N}_{\hbar,\theta})-\lambda_k(\mathfrak{N}^c_{\hbar,\theta})|\leq Ch^N\,.\]
\end{proposition}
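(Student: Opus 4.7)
The plan is to combine the $\hbar^\infty$-microlocalization of Proposition~\ref{prop:mlc} with the fact that the symbols of $\mathfrak{N}_{\hbar,\theta}$ and $\mathfrak{N}^c_{\hbar,\theta}$ differ only far from the minimum of the band function, to conclude that any eigenfunction of one operator is an $\mathscr{O}(\hbar^\infty)$-quasi-mode for the other. The equality of eigenvalues then follows from the min-max principle, keeping track of uniformity in $\theta$ at every step.

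The first step would be to write the difference $\mathfrak{N}_{\hbar,\theta}-\mathfrak{N}^c_{\hbar,\theta}$ as an $\hbar$-pseudodifferential operator with operator-valued symbol. Since the passage from an operator acting on $\mathscr{F}_{\hbar,\theta}$ to its unitarily equivalent version on $2L$-periodic functions shifts the Weyl dual variable $\sigma$ by $\theta$, \eqref{eq:op-c-N} gives the symbol
\[\Delta_{\hbar,\theta}(s,\sigma) = n_{\hbar}(s,\sigma+\theta) - n_{\hbar}\bigl(s,\chi_1(\sigma+\theta)\bigr).\]
By the choice of $\chi_1$ in Section~\ref{sec.chi1}, $\Delta_{\hbar,\theta}$ vanishes wherever $\sigma+\theta$ lies in a fixed neighborhood $U$ of $[\sigma_-(a,E^+),\sigma_+(a,E^+)]$ on which $\chi_1=\mathrm{Id}$.

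Next, I would fix $\epsilon>0$ small enough that $\mu_a^{-1}([0,E^++\epsilon])\subset U$ and take $\chi$ as in Proposition~\ref{prop:mlc}. The symbols $\Delta_{\hbar,\theta}$ and $(1-\chi)(\sigma+\theta)$ have disjoint supports in $\sigma$, separated by a gap that is \emph{independent} of $\theta$; hence the operator-valued composition calculus (cf.\ \cite{Keraval}) gives
\[\bigl(\mathfrak{N}_{\hbar,\theta}-\mathfrak{N}^c_{\hbar,\theta}\bigr)\,\mathrm{Op}^w_\hbar\bigl((1-\chi)(\cdot+\theta)\bigr)=\mathscr{O}(\hbar^\infty)\]
uniformly in $\theta$ in $\mathscr{L}(L^2((\R/2L\Z)\times\R))$. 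Combined with Proposition~\ref{prop:mlc}, applied to any $\psi\in\mathrm{Ran}\,\mathds{1}_{[0,E^+]}(\mathfrak{N}_{\hbar,\theta})$, this yields $(\mathfrak{N}_{\hbar,\theta}-\mathfrak{N}^c_{\hbar,\theta})\psi=\mathscr{O}(\hbar^\infty)$, uniformly in $\theta$. Given $N\geq 1$ and $k$ with $\lambda_k(\mathfrak{N}_{\hbar,\theta})\leq E^+$, the linear span of the first $k$ eigenfunctions of $\mathfrak{N}_{\hbar,\theta}$ then provides a $k$-dimensional test space in the min-max characterization of $\lambda_k(\mathfrak{N}^c_{\hbar,\theta})$, giving $\lambda_k(\mathfrak{N}^c_{\hbar,\theta})\leq \lambda_k(\mathfrak{N}_{\hbar,\theta})+C_N\hbar^N$. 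The reverse inequality is obtained by the symmetric argument, after checking that Proposition~\ref{prop:mlc} remains valid for $\mathfrak{N}^c_{\hbar,\theta}$: its principal symbol $\mu_a\circ\chi_1$ coincides with $\mu_a$ on $U$, has a unique non-degenerate minimum attained inside $U$, and is bounded below by some $\beta^+>E^+$ outside a compact set, so the Agmon-type proof of Proposition~\ref{prop:mlc} applies verbatim.

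The main subtlety is the bookkeeping of uniformity in $\theta$: although the supports of $\Delta_{\hbar,\theta}$ and $(1-\chi)(\cdot+\theta)$ both drift with $\theta$, their relative configuration in $\sigma$ is a $\theta$-independent translate of one another, and the remainders in the pseudodifferential composition are therefore governed by $\theta$-independent constants. Once this is established, the conclusion $|\lambda_k(\mathfrak{N}_{\hbar,\theta})-\lambda_k(\mathfrak{N}^c_{\hbar,\theta})|\leq Ch^N$ is immediate from the two min-max inequalities.
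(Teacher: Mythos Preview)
Your proposal is correct and follows exactly the approach indicated in the paper, which merely states that the result follows ``by using Proposition~\ref{prop:mlc} and the min-max theorem''. You have supplied the natural details of that argument---the support analysis of the symbol difference, the quasi-mode construction, and the verification that Proposition~\ref{prop:mlc} applies equally to $\mathfrak{N}^c_{\hbar,\theta}$---all of which are implicit in the paper's one-line proof.
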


\subsection{Weyl estimate}
A remarkable consequence of Proposition \ref{prop.param} and its corollary is the following Weyl estimate, which improves Proposition \ref{prop:Nb-h}. 
\begin{proposition}\label{cor.Weyl} 
Let $\theta=\theta(\hbar)$ be  as  defined in \eqref{eq:defthetaa}. For $E\in(0,|a|)$,  we have as $\hbar\to0$,
\[N(\mathfrak{N}_{\hbar,\theta},E)\underset{\hbar\to 0}\sim N(\mathfrak{N}^c_{\hbar,\theta},E)\underset{\hbar\to 0}\sim \frac{L(\sigma_+(a,E)-\sigma_-(a,E))}{\pi\hbar}\,,\]
where $\sigma_\pm(a,E)$ is defined in \eqref{eq:xi-pm}.
In particular,
\[N(\mathcal{P}_h, Eh)\underset{\hbar\to 0}\sim \frac{L(\sigma_+(a,E)-\sigma_-(a,E))}{\pi\sqrt{h}}\,.\]
\end{proposition}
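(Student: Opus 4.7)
The plan is to chain three spectral equivalences and end with a standard semiclassical Weyl count on the circle.

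First I would use Proposition~\ref{prop:BHR2.7-bis} to replace $N(\mathcal{P}_h,Eh)$ by $N(\mathfrak{N}_{\hbar,\theta(\hbar)},E)$ modulo $\mathscr{O}(h^\infty)$, and then Proposition~\ref{prop.coincide} to replace $\mathfrak{N}_{\hbar,\theta}$ by its truncated version $\mathfrak{N}^c_{\hbar,\theta}$ with an $\mathscr{O}(\hbar^\infty)$ error on each eigenvalue. Combined with the polynomial bound $N(\mathcal{P}_h,Eh)=\mathscr{O}(h^{-2})$ from Proposition~\ref{prop:Nb-h}, a monotonicity argument (squeezing between shifted thresholds $E\pm\hbar^N$ for large $N$) shows that these $\mathscr{O}(\hbar^\infty)$ errors are negligible compared to the expected leading term of order $\hbar^{-1}$.

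Next I would invoke the Grushin reduction of Propositions~\ref{prop.param} and \ref{cor.1}. For $z\in[0,E]$, the lower-right entry $Q^\pm_\hbar(z)$ of the approximate inverse of $\mathrm{Op}^w_\hbar(\mathscr{P}_{\hbar,z})$ is a scalar $\hbar$-pseudodifferential operator with Weyl symbol
\[q^\pm_{\hbar,z}(s,\sigma)=z-\overset{\circ}{\mu}_a(\sigma)+\hbar q_1^\pm(s,\sigma)+\hbar^2 q^\pm_{2,z}(s,\sigma).\]
The Grushin--Schur principle then says that, modulo the $\mathscr{O}(\hbar^3)$ error, a real number $z\in[0,E]$ belongs to the spectrum of $\mathfrak{N}^c_{\hbar,\theta}$ if and only if $0$ belongs to the spectrum of $Q^\pm_\hbar(z)$ acting on $\mathscr{F}_{\hbar,\theta}$. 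Iterating the nonlinear equation $Q^\pm_\hbar(z)=0$ produces the self-adjoint scalar effective operator $\mathrm{Op}^w_\hbar(p^{\mathrm{eff}}_\hbar)$ of Theorem~\ref{thm.main}, with principal symbol $\mu_a(\sigma)$, whose counting function agrees with that of $\mathfrak{N}^c_{\hbar,\theta}$ on $[0,E]$ up to $o(\hbar^{-1})$.

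Finally, since $E$ is a regular value of $\mu_a$ by Proposition~\ref{prop.mu-a}, I would apply the standard semiclassical Weyl law for scalar $\hbar$-pseudodifferential operators on the circle $\mathbb{R}/2L\mathbb{Z}$ (see \cite{Zworski}):
\[N\bigl(\mathrm{Op}^w_\hbar(p^{\mathrm{eff}}_\hbar),E\bigr)\sim\frac{1}{2\pi\hbar}\iint_{[-L,L)\times\mathbb{R}}\mathbf{1}_{\{\mu_a(\sigma)\leq E\}}\,\mathrm{d}s\,\mathrm{d}\sigma=\frac{L\bigl(\sigma_+(a,E)-\sigma_-(a,E)\bigr)}{\pi\hbar}.\]
The Bloch phase $\theta(\hbar)$ only translates $\sigma$ by a bounded amount and does not affect this leading order. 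Recalling $\hbar=\sqrt{h}$ then yields the stated asymptotics for $N(\mathcal{P}_h,Eh)$.

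The hard part will be the second step: making the Grushin reduction compatible with a genuine eigenvalue count rather than mere approximate spectral information. Concretely, I will need to verify that the $\mathscr{O}(\hbar^3)$ Grushin error shifts eigenvalues by only $\mathscr{O}(\hbar^3)$, and that the passage from the operator-valued problem $\mathfrak{N}^c_{\hbar,\theta}$ to the scalar effective operator on the edge neither creates nor destroys eigenvalues near the threshold $E$. The non-criticality of $\mu_a$ at $E$ is what ensures that eigenvalues of the effective operator have spacing of order $\hbar$ near $E$, so that only $\mathscr{O}(1)$ of them can lie within a window of size $\mathscr{O}(\hbar^3)$, and the leading Weyl term is preserved.
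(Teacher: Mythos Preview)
Your outline is sound and would work, but the paper takes a shorter route that avoids the full effective-operator machinery. Rather than invoking Theorem~\ref{thm.main} or iterating the Grushin--Schur nonlinear equation, the paper compares $\mathfrak{N}^c_{\hbar,\theta}$ directly to the \emph{principal-symbol} operator $\mathrm{Op}^w_\hbar\big(\overset{\circ}{\mu}_a(\cdot+\theta)\big)$ via a min-max argument. Concretely, testing the identities of Proposition~\ref{cor.1} on the span of the first $k$ eigenfunctions of $\mathfrak{N}^c_{\hbar,\theta}$ (using Agmon estimates to absorb the $\langle t\rangle^N$ weights and the fact that $\mathfrak{P}^*_{\hbar,\theta}\mathfrak{P}_{\hbar,\theta}$ has a projection as principal symbol) yields
\[
\bigl|\lambda_k(\mathfrak{N}^c_{\hbar,\theta})-\lambda_k\bigl(\mathrm{Op}^w_\hbar(\overset{\circ}{\mu}_a(\cdot+\theta))\bigr)\bigr|\leq C\hbar
\]
for all $k$ with $\lambda_k(\mathfrak{N}^c_{\hbar,\theta})\leq E$; the reversed inequality comes from using $Q^+_{\hbar,\theta}f$ as trial functions. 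The scalar Weyl law is then applied to $\mathrm{Op}^w_\hbar(\overset{\circ}{\mu}_a)$ rather than to $\mathrm{Op}^w_\hbar(p^{\mathrm{eff}}_\hbar)$.

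The advantage of the paper's approach is that it needs only $\mathscr{O}(\hbar)$ control on each eigenvalue, which comes directly from the Grushin identities without solving any nonlinear spectral problem; your approach pays for the sharper $o(\hbar^2)$ comparison of Theorem~\ref{thm.main} when $\mathscr{O}(\hbar)$ already suffices for a leading-order Weyl count. Your approach, on the other hand, makes the connection to the effective operator explicit and would be preferable if one wanted a two-term Weyl asymptotics. Note also a logical-ordering issue: in the paper the Weyl estimate is established \emph{before} the effective operator of Theorem~\ref{thm.main} is constructed (Proposition~\ref{prop.Nc-eff} is stated only for low-lying eigenvalues near $\beta_a$), so invoking Theorem~\ref{thm.main} here would be forward-referencing.
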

\begin{proof}
The first asymptotics,  $N(\mathfrak{N}_{\hbar,\theta},E)\underset{\hbar\to 0}\sim N(\mathfrak{N}^c_{\hbar,\theta},E)$,  follows from Proposition~\ref{prop.coincide}. 
	Let us focus on establishing  the second one.

Note that  $Q^\pm={\rm Op}(q_{h}^\pm(z))$ with $q_h^{\pm}(z)$ given in \eqref{eq:param1}. Let us now test \eqref{eq:cor.1-1} (with $z=0$) and \eqref{eq:cor.1-2} with functions of $\mathscr{F}_{\hbar, \theta}$ of the form $u=e^{i\theta s/\hbar}\psi$, with  $\psi$ in  the domain of the  operator $\mathfrak{N}_{\hbar,\theta}^c$ (with periodic conditions). We get
\begin{equation}\label{eq.Qpm}
Q^-_{\hbar,\theta}\mathfrak{N}_{\hbar,\theta}^c\psi+Q^\pm_{\hbar,\theta}\mathfrak{P}_{\hbar,\theta}\psi=\hbar^3\mathscr R_{\hbar,\theta}^\pm\psi\,.
\end{equation}
where the index $\theta$ refers to the conjugation by $e^{i\theta s/\hbar}$ (or the translation by $\theta$ of the symbol in $\sigma$). Then, we take the inner product with $\mathfrak{P}_{\hbar,\theta}\psi$.  To deal with the term involving $Q^-_{\hbar,\theta}$, we use the first equality in \eqref{eq:cor.1-2}, and this gives
\begin{multline}\label{eq.ubqpm}
\langle{\rm Op}^w_\hbar\big(\overset{\circ}{\mu}_a(\cdot+\theta)\big)\mathfrak{P}_{\hbar,\theta}\psi,\mathfrak{P}_{\hbar,\theta}\psi\rangle\leq\Re \langle \mathfrak{N}^c_{\hbar,\theta}\psi,\mathfrak{P}^*_{\hbar,\theta}\mathfrak{P}_{\hbar,\theta}\psi\rangle
+C\hbar\|\mathfrak{N}^c_{\hbar,\theta}\psi\|\|\psi\|\\
+( C\hbar^3\|\langle t\rangle ^N\psi\|+C\hbar\|\psi\|)\|\mathfrak{P}_{\hbar,\theta}\psi\|\,.
\end{multline}
We apply this inequality to $\psi$ being a linear combination of eigenfunctions of $\mathfrak N_{\hbar,\theta}^c$ associated with eigenvalues less than $E$ and thus, thanks to the Agmon estimates (with respect to $t$), we can write, for some $C_0>0$, $\eta\in(0,1)$ and for $\hbar$ small enough,
\[\langle{\rm Op}^w_\hbar\big(\overset{\circ}{\mu}_a(\cdot+\theta)\big)\mathfrak{P}_{\hbar,\theta}\psi,\mathfrak{P}_{\hbar,\theta}\psi\rangle\leq \|\mathfrak{N}^c_{\hbar,\theta}\psi\|(\|\mathfrak{P}^*_{\hbar,\theta}\mathfrak{P}_{\hbar,\theta}\psi\|+C\hbar\|\psi\|) +C\hbar\|\psi\|\|\mathfrak{P}_{\hbar,\theta}\psi\|\,.\]
By definition of $\mathfrak{P}_{\hbar,\theta}$, we see that the principal symbol of $\mathfrak{P}^*_{\hbar,\theta}\mathfrak{P}_{\hbar,\theta}$ is a projection so that 
\[\langle{\rm Op}^w_\hbar\big(\overset{\circ}{\mu}_a(\cdot+\theta)\big)\mathfrak{P}_{\hbar,\theta}\psi,\mathfrak{P}_{\hbar,\theta}\psi\rangle\leq(1+\tilde C\hbar) \|\mathfrak{N}^c_{\hbar,\theta}\psi\| \|\psi\|+C\hbar\|\psi\|\|\mathfrak{P}_{\hbar,\theta}\psi\|\,.\]
Applying this inequality to functions in the space spanned by the $k$ first eigenfunctions\footnote{associated with eigenvalues repeated according to the multiplicity} of $\mathfrak{N}_\hbar^c$ (provided that $\lambda_k(\mathfrak{N}^c_{\hbar,\theta})\leq E$), we get
\[\langle {\rm Op}^w_\hbar\big(\overset{\circ}{\mu}_a(\cdot+\theta)\big)\mathfrak{P}_{\hbar,\theta}\psi,\mathfrak{P}_{\hbar,\theta}\psi\rangle\leq (1+\tilde C\hbar) \lambda_k(\mathfrak{N}^c_{\hbar,\theta})\|\psi\|^2+ C\hbar\|\psi\|\|\mathfrak{P}_{\hbar,\theta}\psi\|\,,\]
and also
\begin{equation}\label{eq.ineqPpsipsi}
\langle{\rm Op}^w_\hbar\big(\overset{\circ}{\mu}_a(\cdot+\theta)\big)\mathfrak{P}_{\hbar,\theta}\psi,\mathfrak{P}_{\hbar,\theta}\psi\rangle\leq  (\lambda_k(\mathfrak{N}^c_{\hbar,\theta})+C\hbar)\|\psi\|^2\,.
\end{equation}
We have now to check that, when $\psi$ runs over our $k$-dimensional space, $\mathfrak{P}_{\hbar,\theta}\psi$ runs over a $k$-dimensional space.  Using the first equality in \eqref{eq:cor.1-1} with the $j$-th eigenfunction $\psi=\psi_j$ and $z=\lambda_j(\mathfrak{N}_\hbar^c)$, and by using the Agmon estimates, we see that, there exists $C>0$ such that for all $j, \ell$, 
\[\left|\langle \mathfrak{P}^*_{\hbar,\theta}\mathfrak{P}_{\hbar,\theta}\psi_j,\psi_\ell\rangle-\delta_{j\ell}\right|\leq C\hbar\,.\]
Then, writing $\psi=\sum_{j=1}^k\alpha_j\psi_j$, we have
\begin{equation}\label{eq.Ppsipsi}
\left\|\mathfrak{P}_{\hbar,\theta}\psi\right\|^2=\Re\sum_{j,\ell=1}^k\alpha_j\overline{\alpha_\ell}\langle\mathfrak{P}_{\hbar,\theta}\psi_j,\mathfrak{P}_{\hbar,\theta}\psi_\ell\rangle\geq (1-C\hbar)\sum_{j=1}^k|\alpha_j|^2=(1-C\hbar)\|\psi\|^2\,.
\end{equation}
Recalling \eqref{eq.ineqPpsipsi} and using the min-max theorem, this shows that there exist $C,\hbar_0>0$ such that for all $\hbar\in(0,\hbar_0)$,
\[\lambda_k\left({\rm Op}^w_\hbar\big(\overset{\circ}{\mu}_a(\cdot+\theta)\big)\right)\leq \lambda_k(\mathfrak{N}^c_{\hbar,\theta})+C\hbar\,,\]
provided that $\lambda_k(\mathfrak{N}^c_{\hbar,\theta})\leq E$. By using Proposition \ref{prop.param} and similar arguments, we get the reversed inequality. Let us only sketch the proof. Thanks  to Proposition \ref{prop.param}, we get, for all $f\in L^2_{loc}(\mathbb{R})$ that is $2L$-periodic,
\[\Re\langle(\mathfrak{N}^c_{\hbar,\theta}-z)(Q^+_{\hbar,\theta}f),Q^+_{\hbar,\theta}f\rangle\leq -\Re\langle \mathfrak{P}^*_{\hbar,\theta}Q^\pm_{\hbar,\theta}(z)f,Q^+_{\hbar,\theta}f\rangle+C\hbar^3\|f\|\|Q^+_{\hbar,\theta}f\|\,.\]
By taking $z=0$ and by using the Calder\'on-Vaillancourt theorem to deal with the right-hand-side, we get
\[\langle\mathfrak{N}^c_{\hbar,\theta}(Q^+_{\hbar,\theta}f),Q^+_{\hbar,\theta}f\rangle\leq \Re\langle {\rm Op}^w_\hbar\overset{\circ}{\mu}_a(\cdot+\theta)f,\mathfrak{P}_{\hbar,\theta}Q^+_{\hbar,\theta}f\rangle+C\hbar\|f\|^2\,.\]
Then, we have
\[\langle\mathfrak{N}^c_{\hbar,\theta}(Q^+_{\hbar,\theta}f),Q^+_{\hbar,\theta}f\rangle\leq \Re\langle {\rm Op}^w_\hbar\overset{\circ}{\mu}_a(\cdot+\theta)f,f\rangle+\tilde C\hbar\|f\|^2\,.\]
We can check that $\|Q^+_{\hbar,\theta}f\|\geq c\|f\|$ for some $c>0$.
From the min-max theorem, we infer that
\[\lambda_k(\mathfrak{N}^c_{\hbar,\theta})\leq \lambda_k\left({\rm Op}^w_\hbar\big(\overset{\circ}{\mu}_a(\cdot+\theta)\big)\right)+C\hbar\,.\]
There exist $C,\hbar_0>0$ such that for all $k\geq 1$ and all $\hbar\in(0,\hbar_0)$,
\[\left|\lambda_k\left({\rm Op}^w_\hbar\big(\overset{\circ}{\mu}_a(\cdot+\theta)\big)\right)- \lambda_k(\mathfrak{N}^c_{\hbar,\theta})\right|\leq C\hbar\,,\]
as soon as $\lambda_k(\mathfrak{N}^c_{\hbar,\theta})\leq E$.

It remains to apply the usual Weyl estimate  available for a $\hbar$-pseudodifferential   operator whose principal symbol is $\overset{\circ}{\mu}_a(\sigma+\theta)$ and remember that $\theta\to0$ when $\hbar\to 0$ and that the symbol is $2L$-periodic with respect to $s$.
\end{proof}

\section{Estimate of the bottom of the spectrum}\label{sec:proofs}
Let us now focus on the bottom of the spectrum. Here, we follow the analysis in \cite[Section 8.3]{BLTRS21}, where quite similar considerations were used in the context of the magnetic Dirac operator. In this section, we only highlight the most important steps.
We will sometimes write $\sigma_a=\sigma(a)$ to lighten the notation in this section.

We consider Proposition \ref{prop.param} with $z\in[0,\beta_a+C\hbar]$. In view of \eqref{eq:param1}, this suggests to consider the operator whose Weyl symbol is
\begin{equation}\label{eq.exp.eff-symb}
p^{\mathrm{eff}}_\hbar(s,\sigma)=\overset{\circ}{\mu}_a(\sigma)-\hbar \hat q_{1}^\pm(s,\sigma)-\hbar^2\hat q_{2,\beta_a}^\pm(s,\sigma)\,.
\end{equation}
We let
\[
p^{\mathrm{eff}}_{\hbar,\theta}(s,\sigma)= p^{\mathrm{eff}}_\hbar(s,\sigma + \theta)\,.
\]
\begin{proposition}\label{prop.Nc-eff}
We have, for all $n\geq 1$,
\[\lambda_n(\mathfrak{N}^c_{\hbar,\theta})=\lambda_n\big(\mathrm{Op}^w_\hbar (p^{\mathrm{eff}}_{\hbar,\theta})\big)+o(\hbar^{2})\,,\]
uniformly with respect to $\theta\in\mathbb{R}$.	
\end{proposition}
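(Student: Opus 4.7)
The strategy is a two-sided min-max comparison between $\mathfrak{N}^c_{\hbar,\theta}$ and $\mathrm{Op}^w_\hbar(p^{\mathrm{eff}}_{\hbar,\theta})$ based on the Grushin parametrix of Proposition~\ref{prop.param}, exploited here with $z$ close to $\beta_a$. By design, the effective symbol $p^{\mathrm{eff}}_{\hbar,\theta}$ is essentially the $(2,2)$-entry $q^\pm_{\hbar,z}$ of the inverse Grushin matrix evaluated at $z=\beta_a$: since the $z$-dependence of $q^\pm_{2,z}$ enters only through the analytic regularized resolvent $q_{0,z}$, a direct comparison of \eqref{eq:param1} and \eqref{eq.exp.eff-symb} gives
\begin{equation*}
q^\pm_{\hbar,z}(s,\sigma+\theta) = z - p^{\mathrm{eff}}_{\hbar,\theta}(s,\sigma) + \hbar^2 r_{\hbar,z,\theta}(s,\sigma),
\end{equation*}
where $r_{\hbar,z,\theta} = O(|z-\beta_a|)$ in every seminorm of $S(\R^2)$, uniformly in $\theta$ (the $\theta$-shift merely translates the $\sigma$-variable inside a fixed bounded family of symbols).

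For the inequality $\lambda_n(\mathrm{Op}^w_\hbar(p^{\mathrm{eff}}_{\hbar,\theta}))\leq \lambda_n(\mathfrak{N}^c_{\hbar,\theta})+o(\hbar^2)$, I take the first $n$ normalized eigenfunctions $\psi_1,\dots,\psi_n$ of $\mathfrak{N}^c_{\hbar,\theta}$, viewed in $\mathscr{F}_{\hbar,\theta}$, with eigenvalues $\lambda_j\leq E^+$. Testing the second identity of \eqref{eq:cor.1-1} at $z=\lambda_j$ against $\psi_j$ and absorbing the $\hbar^3$-remainder by the weighted Agmon decay of Proposition~\ref{prop:mlc}, one gets $Q^\pm_{\hbar,\theta}(\lambda_j)\mathfrak{P}_{\hbar,\theta}\psi_j = O(\hbar^3)\|\psi_j\|$ in $L^2$. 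Since the Weyl asymptotics of Proposition~\ref{cor.Weyl} together with the uniqueness of the minimum of $\overset{\circ}{\mu}_a$ force $|\lambda_j-\beta_a|=o(1)$ for each fixed $j$, the display above yields $(\mathrm{Op}^w_\hbar(p^{\mathrm{eff}}_{\hbar,\theta})-\lambda_j)\mathfrak{P}_{\hbar,\theta}\psi_j = o(\hbar^2)\|\psi_j\|$. The almost-orthonormality argument \eqref{eq.Ppsipsi} then shows that $(\mathfrak{P}_{\hbar,\theta}\psi_j)_{j\leq n}$ spans an $n$-dimensional subspace on which the Rayleigh quotient of $\mathrm{Op}^w_\hbar(p^{\mathrm{eff}}_{\hbar,\theta})$ is bounded above by $\lambda_n(\mathfrak{N}^c_{\hbar,\theta})+o(\hbar^2)$, and the min-max principle concludes. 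The reverse inequality is obtained symmetrically: choosing normalized eigenfunctions $\varphi_j$ of $\mathrm{Op}^w_\hbar(p^{\mathrm{eff}}_{\hbar,\theta})$ with eigenvalues $\nu_j\to\beta_a$, the trial functions $Q^+_{\hbar,\theta}\varphi_j$ form an $n$-dimensional system (since $\|Q^+_{\hbar,\theta}\varphi\|\geq c\|\varphi\|$, exactly as at the end of the proof of Proposition~\ref{cor.Weyl}) whose Rayleigh quotient for $\mathfrak{N}^c_{\hbar,\theta}$ is $\nu_n+o(\hbar^2)$, by the first identity of \eqref{eq:cor.1-1} and the identity above.

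The main technical obstacle is sharpening the natural $O(\hbar^2)$ error produced by the $\hbar^2 q^\pm_{2,z}$ term in the parametrix down to $o(\hbar^2)$. The point is a \emph{bootstrap}: the correction $q^\pm_{2,z}-q^\pm_{2,\beta_a}$ is not small \emph{a priori}, but becomes $o(1)$ precisely because the eigenvalues of $\mathfrak{N}^c_{\hbar,\theta}$ lying in the window $[0,E^+]$ converge to $\beta_a$ as $\hbar\to 0$, a fact already available from the principal-symbol analysis. Uniformity with respect to $\theta$ is automatic, since every symbol involved belongs to a $\theta$-independent bounded set in $S(\R^2)$ after the translation $\sigma\mapsto\sigma+\theta$, and the Agmon-type weights and Calder\'on--Vaillancourt bounds used above are likewise $\theta$-uniform.
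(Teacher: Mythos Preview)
Your argument is essentially the same as the paper's: a two-sided comparison based on the Grushin parametrix of Proposition~\ref{prop.param} and Corollary~\ref{cor.1}, with the $o(\hbar^2)$ gain obtained by the bootstrap $\lambda_j\to\beta_a$ and the dimensionality handled via \eqref{eq.Ppsipsi}. One small slip: the weight $\langle t\rangle^N$ in the $\hbar^3$-remainder of \eqref{eq:cor.1-1} is absorbed by the \emph{transverse} Agmon decay in $t$ (the analogue of Proposition~\ref{prop:dec} for $\mathfrak{N}^c_{\hbar,\theta}$), not by the microlocalization result Proposition~\ref{prop:mlc}, which controls the $\sigma$-variable.
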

\begin{proof}
Let us only sketch the proof. We recall that we have \eqref{eq:cor.1-1} and \eqref{eq:cor.1-2}. Thus, for all $\psi$ in the space spanned by the  $n$ first eigenfunctions associated with  the first $n$ eigenvalues of $\mathfrak{N}^c_{\hbar,\theta}$ (which all approach $\beta_a$, as we can check thanks to similar manipulations as in the proof of Proposition~\ref{cor.Weyl}),
\[\|Q_{\hbar,\theta}^\pm(z)\mathfrak{P}_{\hbar,\theta}\psi\|\leq C\|(\mathfrak{N}_{\hbar,\theta}^c-z)\psi\|+C\hbar^3\|\psi\|\,,\]
where we used the Agmon estimates to deal with the term of order $\hbar^3$. Applying this to $z$ such that $z=\beta_a+o(1)$, we see that
\[\|\big(\mathrm{Op}^w_\hbar (p^{\mathrm{eff}}_{\hbar,\theta})-z\big)\mathfrak{P}_{\hbar,\theta}\psi\|\leq C\|(\mathfrak{N}_{\hbar,\theta}^c-z)\psi\|+o(\hbar^2)\|\psi\|\,.\]
With \eqref{eq.Ppsipsi} and the Spectral Theorem\footnote{Use $z=\lambda_j((\mathfrak{N}_{\hbar,\theta}^c)$ and take $\psi$ in the corresponding eigenspace.}, this shows that the $n$ first eigenvalues (repeated with multiplicity)  of $\mathfrak{N}_{\hbar,\theta}^c$ lie at a distance $o(\hbar^2)$ to the spectrum of $\mathrm{Op}^w_\hbar (p^{\mathrm{eff}}_{\hbar,\theta})$. In particular, this gives the lower bound
\[\lambda_n(\mathfrak{N}^c_{\hbar,\theta})\geq\lambda_n\big(\mathrm{Op}^w_\hbar (p^{\mathrm{eff}}_{\hbar,\theta})\big)+o(\hbar^{2})\,.\]
The upper bound follows from similar arguments.
\end{proof}

Then, we can check that the eigenfunctions of $\mathrm{Op}^w_\hbar( p^{\mathrm{eff}}_{\hbar,\theta})$ are microlocalized with respect to $\sigma+\theta$ near $\sigma(a)$ at the scale $\hbar^{\frac{\gamma}{2}}$ (for all $\gamma\in(0,1)$) by using that the principal symbol has a unique minimum, which is non-degenerate. This leads us to write the Taylor expansion 
\begin{multline*}
p^{\mathrm{eff}}_{\hbar}(s,\sigma)=\frac{\mu''_a(\sigma_a)}{2}(\sigma-\sigma_a)^2-\hbar\hat q_{1}^\pm(s,\sigma_a)-\hbar(\sigma-\sigma_a)\partial_\sigma \hat q_1^\pm(s,\sigma_a)-\hbar^2q_{2,\beta_a}^\pm(s,\sigma_a)\\
+\mathscr{O}(\hbar(\sigma-\sigma_a)^2+\hbar^2(\sigma-\sigma_a)+(\sigma-\sigma_a)^3)\,.
\end{multline*}
Rearranging the first terms, we get
\begin{equation}\label{eq.aefftob}
	p^{\mathrm{eff}}_\hbar(s,\sigma)=b_\hbar(s,\sigma)
	+\mathscr{O}(\hbar(\sigma-\sigma_a)^2+\hbar^2(\sigma-\sigma_a)+(\sigma-\sigma_a)^3)\,,
\end{equation}
where
\begin{multline}\label{eq.bhbardiff}
b_\hbar(s,\sigma)=\frac{\mu''_a(\sigma_a)}{2}\left(\sigma-\sigma_a-\hbar\frac{\partial_\sigma \hat q_1^\pm(s,\sigma_a)}{\mu''_a(\sigma_a)}\right)^2-\hbar q_{1}^\pm(s,\sigma_a)\\
+\hbar^2\left(q_{2,\beta_a}^\pm(s,\sigma_a)+\frac{(\partial_\sigma  q_1^\pm(s,\sigma_a)^2}{2\mu''_a(\sigma_a)}\right)\,.
\end{multline}
We let
\[b_{\hbar,\theta}(s,\sigma)=b_\hbar(s,\sigma+\theta)\,.\]
Note that $\mathrm{Op}^w_\hbar b_{\hbar,\theta}$ is a differential operator of order $2$ and that it shares common features with that of \cite[(8.10)]{BLTRS21}. The difference is the presence of the a priori non-zero term $\hbar\,\hat q_{1}^\pm(s,\sigma_a)$. In Lemmas \ref{lem.q1=Ck} and \ref{lem.a=-1}, we describe the terms appearing in \eqref{eq.bhbardiff}.

\begin{lemma}\label{lem.q1=Ck}
When $a>-1$, 
	\[ q_{1}^\pm(s,\sigma_a)=C(a)k(s)+{ \mathscr{O}(\hbar^\infty)}\,,\]
	with $C(a)= -M_3(a)>0$, with $M_3(a)$ defined in \eqref{eq:moments} and calculated in \eqref{eq:m3*}. When $a=-1$, we have $ q_{1}^\pm(s,\sigma_a)=\mathscr{O}(\hbar^\infty)$. 
\end{lemma}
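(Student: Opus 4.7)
The plan is to compute $q_1^\pm(s,\sigma_a)$ directly from the explicit formula \eqref{eq:param20} together with the explicit expression of $n_1$ in \eqref{eq.n0,1,2}, and then invoke the moment identities from Subsection~\ref{sec:uf}.

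First I would note that, by the construction of $\chi_1$ in Subsection~\ref{sec.chi1}, the point $\sigma(a)$ lies in the interval where $\chi_1$ is the identity, so that $\chi_1(\sigma_a)=\sigma_a$ and $\overset{\circ}{u}_{\sigma_a}=\phi_a$, the positive normalized ground state of $\mathfrak h_a[\sigma(a)]$. Plugging this into \eqref{eq:param20} and using the expression of $n_1$ in \eqref{eq.n0,1,2},
\[
q_1^\pm(s,\sigma_a)=-k(s)\int_{\R} c_\mu(t)\Bigl(2t(\sigma_a-b_a t)^2+b_a t^2(\sigma_a-b_a t)\Bigr)|\phi_a(t)|^2\,\dd t.
\]

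Next, I would remove the cutoff $c_\mu$. Since $\phi_a$ is the ground state of a Schr\"odinger operator with a coercive potential, it decays exponentially at infinity, while $c_\mu$ equals $1$ on $[-\mu^{-1},\mu^{-1}]$ with $\mu=\hbar^{2\eta}\to 0$. A routine Agmon-type estimate thus shows that replacing $c_\mu$ by $1$ produces an error of size $\mathscr{O}(\hbar^\infty)$, so
\[
q_1^\pm(s,\sigma_a)=-k(s)\int_{\R}\Bigl(2t(\sigma_a-b_a t)^2+b_a t^2(\sigma_a-b_a t)\Bigr)|\phi_a(t)|^2\,\dd t+\mathscr{O}(\hbar^\infty).
\]

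The remaining integral is precisely a combination of the two moments given in \eqref{aqddid}. Using those identities,
\[
2\bigl(M_3(a)+\sigma_a M_2(a)\bigr)+\bigl(-M_3(a)-2\sigma_a M_2(a)\bigr)=M_3(a),
\]
so $q_1^\pm(s,\sigma_a)=-M_3(a)k(s)+\mathscr{O}(\hbar^\infty)=C(a)k(s)+\mathscr{O}(\hbar^\infty)$ with $C(a)=-M_3(a)$. The positivity $C(a)>0$ for $-1<a<0$ follows from the sign of $M_3(a)$ recalled just after \eqref{eq:m3*}. For $a=-1$, the identity $M_3(-1)=0$ recalled in Subsection~\ref{sec:uf} gives $q_1^\pm(s,\sigma_a)=\mathscr{O}(\hbar^\infty)$, as claimed.

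The only non-routine step is the justification of the exponential decay used to discard the cutoff $c_\mu$, and this rests on the standard exponential decay of the ground state $\phi_a$ together with the polynomial growth of the integrand, so no serious obstacle is expected.
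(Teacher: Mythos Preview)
Your proof is correct and follows essentially the same route as the paper: you write out $q_1^\pm(s,\sigma_a)$ from \eqref{eq:param20} and \eqref{eq.n0,1,2}, drop the cutoff $c_\mu$ using the exponential decay of $\phi_a$, and then reduce the remaining integral to $M_3(a)$ via the identities \eqref{aqddid}. The only differences are cosmetic: you make explicit the observation $\chi_1(\sigma_a)=\sigma_a$ and spell out the arithmetic $2(M_3+\sigma_a M_2)+(-M_3-2\sigma_a M_2)=M_3$, both of which the paper leaves implicit.
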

\begin{proof}
By \eqref{eq:param20} and the definition of $n_1$ in \eqref{eq.nj},
\[   q_{1}^\pm(s,\sigma_a)= -k(s)\int_\R c_\mu \big(2t(\sigma_a-b_a t)^2+b_a t^2(\sigma_a-b_a t)\big)|\phi_a(t)|^2\mathrm{d}t\,,\]
where $\phi_a=\overset{\circ}{u}_{\sigma_a}$  and where $c_{\mu}$ was defined in \eqref{eq:c-delta}.  Since $\phi_a$ decays exponentially at $\pm\infty$,  we get
\begin{align*}  q_{1}^\pm(s,\sigma_a)&= -k(s)\int_\R \big(2t(\sigma_a-b_a t)^2+b_a t^2(\sigma_a-b_a t)\big)|\phi_a(t)|^2\mathrm{d}t+\mathscr{O}(\hbar^\infty)\\
&=-k(s)M_3(a)+\mathscr{O}(\hbar^\infty)\,,
\end{align*}
where we used  \eqref{aqddid}.  By \eqref{eq:m3*},  $M_3(-1)=0$  and $M_3(a)<0$ for $-1<a<0$. 
\end{proof}

\begin{lemma}\label{lem.a=-1}
	When $a=-1$, we have
	\[q_{2,\beta_a}^\pm(s,\sigma(a))=C_0  k(s)^2  +\mathscr O(\hbar^{\infty}),\quad{\rm and}\quad \partial_\sigma  q_1^\pm(s,\sigma(a))=0\,,\]
	with $C_0<0$ a universal constant.
\end{lemma}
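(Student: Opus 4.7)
The plan is to exploit the fact that the symbols $n_1$ and $n_2$ from \eqref{eq.n0,1,2} are, respectively, linear and purely quadratic in $k(s)$, combined with the reflection symmetry $t \mapsto -t$ that becomes available at $a = -1$. Reading off \eqref{eq.n0,1,2}, one has the factorizations $n_1(s,\sigma) = k(s)\,\tilde n_1(\sigma)$ and $n_2(s,\sigma) = k(s)^2\,\tilde n_2(\sigma)$ with $s$-independent multiplication operators $\tilde n_1, \tilde n_2$ in $t$ (depending on $\hbar$ only through $c_\mu$). Substituting into \eqref{eq:param20}--\eqref{eq:param2} yields
\[
q_1^\pm(s,\sigma) = -k(s)\,A_\hbar(\sigma), \qquad q_{2,z}^\pm(s,\sigma) = k(s)^2\,B_\hbar(z,\sigma),
\]
with $A_\hbar(\sigma) = \langle \tilde n_1(\chi_1(\sigma))\overset{\circ}{u}_\sigma,\overset{\circ}{u}_\sigma\rangle$ and
\[
B_\hbar(z,\sigma) = \langle \overset{\circ}{\mathfrak R}_{0,z}(\sigma)\tilde n_1(\chi_1(\sigma))\overset{\circ}{u}_\sigma,\, \tilde n_1(\chi_1(\sigma))\overset{\circ}{u}_\sigma\rangle - \langle \tilde n_2(\chi_1(\sigma))\overset{\circ}{u}_\sigma,\overset{\circ}{u}_\sigma\rangle.
\]
In particular, $q_{2,\beta_{-1}}^\pm$ already has the required factorized form $k(s)^2\,B_\hbar(\beta_{-1},\sigma_{-1})$.

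At $a=-1$, $b_{-1}(t)t = |t|$, so the operator $\mathfrak h_{-1}[\tau] = -\partial_t^2 + (\tau - |t|)^2$ commutes with the parity operator, and its positive ground state $\overset{\circ}{u}_\sigma$ is an even function of $t$ for every $\sigma$. Both summands $2t(\tau - |t|)^2$ and $\mathrm{sgn}(t)\,t^2(\tau - |t|)$ making up $\tilde n_1(\tau)$ are odd in $t$, and since $c_\mu(t) = c(\hbar^{2\eta}t)$ is even by \eqref{eq:c-delta}, the function $\tilde n_1(\tau)\overset{\circ}{u}_\sigma$ is odd. Thus $A_\hbar(\sigma) = 0$ for every $\sigma$, which gives $q_1^\pm(s,\sigma) \equiv 0$, and in particular $\partial_\sigma q_1^\pm(s,\sigma_{-1}) = 0$.

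For the first identity, set $z = \beta_{-1}$ and $\sigma = \sigma_{-1} = \xi_0$. Since $\phi_{-1}(t) = f_0(|t|)$ decays exponentially and $c_\mu \equiv 1$ on $|t| \leq \hbar^{-2\eta}$, replacing $c_\mu$ by $1$ in $\tilde n_1$ and $\tilde n_2$ introduces only an $\mathscr O(\hbar^\infty)$ error, so $B_\hbar(\beta_{-1},\sigma_{-1}) = C_0 + \mathscr O(\hbar^\infty)$ with the $\hbar$-independent constant
\[
C_0 = \langle (\mathfrak h_{-1}[\xi_0]-\Theta_0)^{-1}_\perp\,\tilde n_1^{\,0}(\xi_0)\phi_{-1},\,\tilde n_1^{\,0}(\xi_0)\phi_{-1}\rangle - \langle \tilde n_2^{\,0}(\xi_0)\phi_{-1},\phi_{-1}\rangle,
\]
where $\tilde n_j^{\,0}$ denotes $\tilde n_j$ with $c_\mu$ set to $1$ and $(\cdot)^{-1}_\perp$ is the inverse on the orthogonal complement of $\phi_{-1}$. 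The main obstacle is proving the strict inequality $C_0 < 0$. By Lemma~\ref{lem.mu2}, $\mu_{-1}^{[2]}(\xi_0) > 1 > \Theta_0$, so the regularized resolvent is a positive operator and the first term of $C_0$ is nonnegative; hence one has to show that the second term strictly dominates. I would reduce both inner products to integrals over $\mathbb R_+$ using $\phi_{-1}(t) = f_0(|t|)$, and compute the resolvent term by solving the Sturm--Liouville problem $(\mathfrak h[\xi_0] - \Theta_0)\,v = \tilde n_1^{\,0}(\xi_0)f_0$ with appropriate boundary conditions on $\mathbb R_+$, so that $C_0$ becomes an explicit combination of the de Gennes moments $M_0,\ldots,M_4$ listed in Subsection~\ref{sec.dG}, from which the sign can finally be read off.
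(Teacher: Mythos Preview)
Your approach is the same as the paper's. The parity argument for $\partial_\sigma q_1^\pm(s,\sigma_{-1})=0$ is identical (in fact you obtain the slightly stronger $q_1^\pm\equiv 0$ at $a=-1$, which is fine), and your plan for $C_0$ --- reduce to $\R_+$, solve the resolvent ODE there, and express everything through the de\,Gennes moments $M_0,\dots,M_4$ --- is precisely what the paper carries out.

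Two places where your sketch must be filled in to match the paper. First, the ``appropriate boundary condition'' on $\R_+$ is Dirichlet at $0$: since $\tilde n_1^{\,0}(\xi_0)\phi_{-1}$ is odd and the regularized resolvent commutes with parity, its image is odd, hence vanishes at $0$. The paper then solves $-v''+(\xi_0-t)^2v-\Theta_0 v=Pf_0$ with $v(0)=0$ explicitly via the ansatz $v=pf_0+qf_0'$ with polynomial $p,q$, which is how $\langle v,w\rangle$ is ultimately reduced to moments.

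Second, and this is the genuine gap, the sign $C_0<0$ cannot simply be ``read off'' from the moment formula. The paper obtains $C_0=-\tfrac14+G$ with
\[
G=-\tfrac{21}{8}-\tfrac{9}{8}\Theta_0^2-\tfrac{57}{4}\xi_0 M_3\,,
\]
and since $M_3<0$ the last term is \emph{positive}, so the sign of $G$ is not determined by structure alone. The paper concludes $G<0$ only by inserting the known numerical bounds $\Theta_0\approx 0.59$ and $M_3\approx -0.145$ (from \cite{Bon12}). Your argument that the resolvent term is nonnegative is correct but insufficient: you still need to show that $\langle\tilde n_2^{\,0}\phi_{-1},\phi_{-1}\rangle$ strictly dominates it, and no purely qualitative argument for this is offered either by you or by the paper.
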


The proof below establishes that $C_0 = -\frac{1}{4} + G$, where $G$ is given by \eqref{eq:ValueG}.

\begin{proof}
Let us recall  \eqref{eq:param20} and \eqref{eq.nj}.
For $a=-1$ ,  the function $\tau\mapsto \overset{\circ}{u}_{\sigma}(\tau)$ is even and  the functions
	\[ 
	\begin{aligned}
		\tau\mapsto &~n_1(s,\sigma)=k(s)c_\mu(\tau)\Big(2\tau(\sigma-b_a\tau)^2+b_a\tau^2(\sigma-b_a\tau)\Big)\,,\\
		\tau\mapsto &~ \partial_\sigma n_1(s,\sigma)=k(s)c_\mu(\tau)\Big( 4\tau(\sigma-b_\tau)+b_a\tau^2\Big)
	\end{aligned}  \]
	are odd.   So we get $\partial_\sigma q^\pm_1(s,\sigma(a))|_{a=-1}=0$.
	
	By the  same considerations,  using \eqref{eq:param2} and \eqref{eq.n0,1,2},  we have
	\[
	q_{2,\beta_a}^\pm(s,\sigma(a))|_{a=-1}=-\frac{k(s)^2}{4}+k(s)^2G +\mathscr O(\hbar^{\infty})\,,
	\]
	where (recall the function $f_0$ defined in Section~\ref{sec.dG})
	\begin{equation}\label{eq.G}
		G=2\langle v,w\rangle -2\int_0^{+\infty} \Big(3t^2(\xi_0-t)^2+2t^3(\xi_0-t)+\frac14t^4\Big)|f_0(t)|^2\mathrm{d}t\,.
	\end{equation}
	Here $w=\big(2t(\xi_0-t)^2+t^2(\xi_0-t)\big)f_0(t)$ and  $v$ is  the  unique solution of
	\[\begin{cases}\label{eq.f=Rw}
		-v''+(\xi_0-t)^2v -\Theta_0  v=w&{\rm  on~}\R_+\,,\\
		v(0)=0\,.
	\end{cases}
	\]
We will prove by a somewhat lengthy but elementary calculation that 
\begin{align}\label{eq:ValueG}
G=-7M_4+\frac32\xi_0M_3+\frac32\Theta_0^2=-\frac{21}{8}-\frac{9}8\Theta_0^2-\frac{57}{4}\xi_0M_3\,.
\end{align}
Numerically (see \cite{Bon12}),  $\xi_0\approx\sqrt{0.59}$ and $M_3=-C_1/2$ with   $0.858\leq 3C_1\leq 0.888$. Consequently $G<0$. So to finish the proof of Lemma~\ref{lem.a=-1} it only remains to prove \eqref{eq:ValueG}.
	
	For all  $k\geq 1$, we set
	\[ P_k=(\xi_0-t)^k\]
	and we observe that
	\[
	\begin{aligned}
		3t^2(\xi_0-t)^2&=3P_4-6\xi_0P_3+3\Theta_0P_2\,,\\
		2t^3(\xi_0-t)&=-2P_4+6\xi_0P_3-6\Theta_0P_2+2\xi_0\Theta_0P_1\,,\\
		\frac14t^4&=\frac14P_4-\xi_0P_3+\frac32\Theta_0P_2-\xi_0\Theta_0P_1+\Theta_0^2\,.
	\end{aligned}
	\]
	Consequently,
	\begin{multline*}
		2\int_0^{+\infty} \Big(3t^2(\xi_0-t)^2+2t^3(\xi_0-t)+\frac14t^4\Big)|f_0(t)|^2dt\\
		=\frac{5}2M_4-2\xi_0M_3-3\Theta_0M_2+2\xi_0\Theta_0M_1+2\Theta_0^2\\
		=\frac52M_4-2\xi_0M_3+\frac{\Theta_0^2}2\,.
	\end{multline*}
	Let us now compute 
	\begin{equation}\label{eq:vw=Pvf0}
		\langle v,w\rangle=\langle Pv,f_0\rangle
	\end{equation}
	where
	\[P(t)=2t(\xi_0-t)^2+t^2(\xi_0-t)=-(\xi_0-t)^3+\Theta_0(\xi_0-t)=-P_3(t)+\Theta_0P_1(t)\,. \]
	Let $p,q$ be two polynomial functions such that
	\[ v_0:=pf_0+qf_0'\]
	satisfies
	\[ \begin{cases}
		-v_0''+(\xi_0-t)^2v_0-\Theta_0v_0=Pf_0~{\rm  on~}\R_+\,,\\
		v_0(0)=0\,,
	\end{cases} \]
	thereby yielding the condition $p(0)=0$ and
	\[ \big(-p''+2P_1q+(\Theta_0-P_2)q' \big)f_0+( -2p'-q'')f_0'=Pf_0~{\rm on~}\R_+\,.\]
	 We look for $p$ and $q$ satisfying the  condition $-2p'-q''=0$ and  $q$ in the form $q=aP_2+bP_1+c$,   where $a,b$ and $c$ are to be determined. \\
	  We find after  straightforward computations:
	\[\begin{aligned}
		-p''+2P_1q+&(\Theta_0-P_2)q'=-P_3+\Theta_0P_1\\
		&\Longleftrightarrow 4aP_3+3bP_2+2(c-a\Theta_0)P_1-b\Theta_0=-P_3+\Theta_0P_1\\
		&\Longleftrightarrow a=-\frac14,~b=0,~c=\frac{\Theta_0}4\,,
	\end{aligned} \]
	and therefore
	\[\begin{aligned}
		p(t)&=\frac{t}4, \\ ~q(t)&=-\frac14(\xi_0-t)^2+\frac{\Theta_0}4,\\ ~v=v_0&=pf_0+qf_0'
		=\frac14\big((\xi_0-P_1)f_0+(-P_2+\Theta_0)f_0' \big)\,.
	\end{aligned}\]
We can now compute \eqref{eq:vw=Pvf0}. Noticing that
	\[Pp=\frac14(P_4-\xi_0P_3-\Theta_0P_2+\xi_0\Theta_0P_1),  ~Pq=\frac14( P_5-\Theta_0P_3-\Theta_0P_2+\Theta_0^2 )\,, \]
	we have
	\[
	\langle Pv,f_0\rangle=\frac14(M_4-\xi_0M_3-\Theta_0M_2)
	-\frac14 \langle  ( P_5-\Theta_0P_3-\Theta_0P_2+\Theta_0^2 )f_0',f_0\rangle\,.
	\]
	After an  integration by parts, we have
	\[
	\begin{aligned}
		&2 \langle  ( P_5-\Theta_0P_3-\Theta_0P_2+\Theta_0^2 )f_0',f_0\rangle\\
		&=-2 \langle  f_0,( P_5-\Theta_0P_3-\Theta_0P_2+\Theta_0^2 )'f_0,f_0\rangle -|f_0(0)|^2( P_5-\Theta_0P_3-\Theta_0P_2+\Theta_0^2 )(0)\rangle\\
		&=-2 \langle  f_0,( -5P_4+3\Theta_0P_2+2\Theta_0P_1 )f_0,f_0\rangle+0\\
		&=10M_4-6\Theta_0M_2\,.
	\end{aligned}\]
	Therefore,
	\[ \langle Pv,f_0\rangle=\Big( -\frac52+\frac14\Big)M_4-\frac{\xi_0}4M_3+\frac54\Theta_0M_2\,. \]
	Inserting this into \eqref{eq:vw=Pvf0}, we infer from \eqref{eq.G} that \eqref{eq:ValueG} is true.
	This finishes the proof.
\end{proof}

The study of the differential operator $\mathrm{Op}^w_\hbar ( b_{\hbar,\theta})$ is rather easy and the behavior of the spectrum depends on $a$.

When $a>-1$, thanks to our assumption on the maximum of the curvature, we are reduced to use the harmonic approximation  at $(s_{\max},\sigma(a))$ and we get the following.

\begin{proposition}[Case $a>-1$]\label{prop:finala>}
	When $a>-1$ and $k$ has a unique maximum which is non-degenerate, we have
	\[ \lambda_n\big(\mathrm{Op}^w_\hbar (b_{\hbar,\theta})\big)=-C(a)k_{\max}\hbar+(n-1/2)\hbar^{\frac32}\sqrt{-C(a)\mu''_a(\sigma(a))k''(s_{\max}) }+o(\hbar^{\frac32})\,,\]
	uniformly with respect to $\theta\in\mathbb{R}$.
\end{proposition}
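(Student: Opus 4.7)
My plan is to reduce $\mathrm{Op}^w_\hbar(b_{\hbar,\theta})$ to a semiclassical harmonic oscillator at the spatial scale $\hbar^{1/4}$, yielding eigenvalue spacings of order $\hbar^{3/2}$. Invoking Lemma~\ref{lem.q1=Ck} to replace $q_1^\pm(s,\sigma_a)$ by $C(a)k(s)$ modulo $\mathscr{O}(\hbar^\infty)$, the symbol \eqref{eq.bhbardiff} shifted by $\theta$ reads
\[
b_{\hbar,\theta}(s,\sigma) = \frac{A}{2}\bigl(\sigma + \theta - \sigma_a - \hbar r(s)\bigr)^2 - \hbar C(a) k(s) + \hbar^2 R(s,\sigma),
\]
with $A := \mu''_a(\sigma_a) > 0$ and bounded $r,R$. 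Since the Weyl operator acts on $2L$-periodic functions while $\theta - \sigma_a = O(1)$, I would first absorb the bulk of this shift through the gauge transform $U = e^{i m \pi s/L}$, with $m \in \Z$ chosen so that $\alpha' := \theta - \sigma_a + m\pi\hbar/L$ satisfies $|\alpha'| \leq \pi\hbar/(2L) = O(\hbar)$. This unitary preserves $L^2_{2L}(\R)$ and leaves the spectrum unchanged; the residual shift $\alpha'$ contributes only at order $\hbar^{7/4}$ on low-energy states (whose effective momentum will be $O(\hbar^{3/4})$), hence $o(\hbar^{3/2})$.

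Next I would perform the harmonic approximation at the non-degenerate minimum, located at $s = s_{\max}$ with zero effective momentum. Taylor-expanding $k(s) = k_{\max} + \tfrac12 k''(s_{\max})(s-s_{\max})^2 + \mathscr{O}((s-s_{\max})^3)$ and rescaling $s = s_{\max} + \hbar^{1/4} y$, so that $\hbar D_s = \hbar^{3/4} D_y$, the leading part of the operator becomes
\[
-\hbar C(a) k_{\max} + \hbar^{3/2}\!\left(\tfrac{A}{2} D_y^2 + \tfrac{B}{2} y^2\right), \qquad B := -C(a)k''(s_{\max}) > 0,
\]
whose $n$-th eigenvalue is $-\hbar C(a)k_{\max} + \hbar^{3/2}(n-1/2)\sqrt{AB}$. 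Since $AB = -C(a)\mu''_a(\sigma_a)k''(s_{\max})$, this matches the claimed asymptotics. The upper bound follows from quasimodes built from Hermite functions in $y$, cut off smoothly in $|s-s_{\max}| \leq \hbar^{1/4-\epsilon}$ so that they are supported in a single fundamental domain of the torus. The lower bound follows from an IMS-type partition of unity separating a neighborhood of $s_{\max}$ from its complement: away from $s_{\max}$, $-\hbar C(a) k(s) \geq -\hbar C(a)(k_{\max} - \delta)$ for some $\delta > 0$, which together with the non-negativity of the kinetic term places those states at energy strictly above $-\hbar C(a) k_{\max} + c\hbar$ for some $c>0$; near $s_{\max}$, a direct comparison with the harmonic oscillator on $\R$ applies, justified by the exponential decay of its low eigenfunctions.

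The main obstacle is to show that the remainder $\hbar^2 R(s,\sigma)$ and the cubic Taylor error in $k$ contribute only $o(\hbar^{3/2})$, uniformly in $\theta$. This requires Agmon-type estimates establishing that the low-lying eigenfunctions concentrate within $|s - s_{\max}| \lesssim \hbar^{1/4-\epsilon}$ and at effective momentum $|\sigma + \alpha'| \lesssim \hbar^{3/4-\epsilon}$; the higher-order terms evaluated on such states then produce contributions of order $\hbar^{3/2+\gamma}$ for some $\gamma > 0$. Uniformity in $\theta$ is ensured by the preliminary gauge reduction to $\alpha' = O(\hbar)$, which makes every subsequent estimate independent of the Floquet parameter.
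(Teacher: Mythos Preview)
Your proposal is correct and follows the same route as the paper, which simply invokes the harmonic approximation at $(s_{\max},\sigma(a))$ without giving further details. Your write-up supplies precisely the missing ingredients: the gauge reduction $e^{im\pi s/L}$ to make the momentum shift $O(\hbar)$ uniformly in $\theta$, the $\hbar^{1/4}$ rescaling identifying the effective harmonic oscillator $\tfrac{A}{2}D_y^2+\tfrac{B}{2}y^2$ with $AB=-C(a)\mu''_a(\sigma_a)k''(s_{\max})$, and the quasimode/IMS argument for upper and lower bounds. One small inaccuracy: the $\hbar^2$ term in \eqref{eq.bhbardiff} depends on $s$ only (it is evaluated at $\sigma=\sigma_a$), so $R=R(s)$ rather than $R(s,\sigma)$; this only simplifies your remainder analysis.
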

In the  case  $a=-1$, there is essentially nothing to do. 

\begin{proposition}[Case $a=-1$]\label{pro:final-1}
	When $a=-1$, we have
	\[ \lambda_n\big(\mathrm{Op}^w_\hbar (b_{\hbar,\theta})\big)=\hbar^2\lambda_n\left(\mathcal{B}_{\hbar,\theta}\right)+\mathscr{O}(\hbar^\infty)\,,\]
	where
	\begin{equation*}
		\mathcal{B}_{\hbar,\theta}=\frac{\mu''_a(\sigma(a))}{2}\left(D_s+\hbar^{-1}\theta-\hbar^{-1}\sigma(a)\right)^2\\
		+C_0  k(s)^2\,.
	\end{equation*}
\end{proposition}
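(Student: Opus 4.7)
The plan is to substitute the vanishing results from Lemma~\ref{lem.a=-1} directly into the formula \eqref{eq.bhbardiff} defining $b_\hbar$, and then recognise that the Weyl quantization of the resulting symbol is precisely $\hbar^2\mathcal{B}_{\hbar,\theta}$ up to a negligible remainder.

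More concretely, the first step is to insert into \eqref{eq.bhbardiff} the three identities available when $a=-1$: we have $q_1^\pm(s,\sigma_a)=\mathscr{O}(\hbar^\infty)$ from Lemma~\ref{lem.q1=Ck}, and $\partial_\sigma q_1^\pm(s,\sigma_a)=0$ together with $q_{2,\beta_a}^\pm(s,\sigma_a)=C_0 k(s)^2+\mathscr{O}(\hbar^\infty)$ from Lemma~\ref{lem.a=-1}. Therefore the symbol $b_\hbar$ reduces to
\[
b_\hbar(s,\sigma)=\frac{\mu''_{-1}(\sigma(-1))}{2}(\sigma-\sigma(-1))^2+\hbar^2 C_0 k(s)^2+\mathscr{O}(\hbar^\infty)\,,
\]
and consequently
\[
b_{\hbar,\theta}(s,\sigma)=\frac{\mu''_{-1}(\sigma(-1))}{2}(\sigma+\theta-\sigma(-1))^2+\hbar^2 C_0 k(s)^2+\mathscr{O}(\hbar^\infty)\,.
\]

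Next, since the principal part is a polynomial of degree two in $\sigma$, the Weyl quantization is elementary (the cross term $s\sigma$ does not appear, so no ordering issue arises): one has $\mathrm{Op}_\hbar^w((\sigma+\theta-\sigma(-1))^2)=(\hbar D_s+\theta-\sigma(-1))^2$. Factoring out $\hbar^2$ in front of the first term we obtain exactly
\[
\mathrm{Op}_\hbar^w(b_{\hbar,\theta})=\hbar^2\Bigl[\tfrac{\mu''_{-1}(\sigma(-1))}{2}\bigl(D_s+\hbar^{-1}\theta-\hbar^{-1}\sigma(-1)\bigr)^2+C_0 k(s)^2\Bigr]+\mathscr{R}_\hbar=\hbar^2\mathcal{B}_{\hbar,\theta}+\mathscr{R}_\hbar\,,
\]
where $\mathscr{R}_\hbar=\mathrm{Op}_\hbar^w(\mathscr{O}(\hbar^\infty))$ has operator norm $\mathscr{O}(\hbar^\infty)$ on $\mathscr{F}_{\hbar,\theta}$ (by Calder\'on--Vaillancourt, say, applied after noting that the remainder symbol and all its derivatives are $\mathscr{O}(\hbar^\infty)$, uniformly in $\theta$ and $s$, since the error comes solely from integration against the exponentially decaying ground state $\phi_{-1}$ outside the support of the cutoff $c_\mu$).

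Finally, since $\mathcal{B}_{\hbar,\theta}$ has compact resolvent on the circle $\R/2L\Z$ (it is an elliptic second-order differential operator with periodic boundary conditions, curvature $k$ being smooth), the min-max principle applied to the self-adjoint perturbation of size $\mathscr{O}(\hbar^\infty)$ yields
\[
\lambda_n(\mathrm{Op}_\hbar^w(b_{\hbar,\theta}))=\hbar^2\lambda_n(\mathcal{B}_{\hbar,\theta})+\mathscr{O}(\hbar^\infty)\,,
\]
uniformly in $\theta$. There is essentially no obstacle here: the only subtlety worth flagging is to verify that the $\mathscr{O}(\hbar^\infty)$ bound on the symbol (coming from the cutoff error in the matrix elements $\langle n_j(s,\sigma_a)\phi_a,\phi_a\rangle$) indeed lifts to an $\mathscr{O}(\hbar^\infty)$ operator-norm bound on the corresponding pseudodifferential operator acting on $\mathscr{F}_{\hbar,\theta}$, for which Calder\'on--Vaillancourt together with the argument of Section~\ref{s-sec.p} suffices.
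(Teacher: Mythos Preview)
Your proof is correct and is exactly what the paper has in mind: the sentence immediately preceding the proposition reads ``In the case $a=-1$, there is essentially nothing to do,'' and your argument spells out that direct substitution of Lemmas~\ref{lem.q1=Ck} and~\ref{lem.a=-1} into \eqref{eq.bhbardiff}, followed by the trivial Weyl quantization of a quadratic-in-$\sigma$ symbol, is all that is required.
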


Taking $\theta=\theta(\hbar)$ (see \eqref{eq:defthetaa}) and arguing as in \cite[Section 8.3]{BLTRS21} to deal with the remainders in \eqref{eq.aefftob}, we deduce Theorems \ref{corol.a>-1} and \ref{corol.a=-1} from Propositions \ref{prop.coincide} and  \ref{prop.Nc-eff}.
Since there has been a number of changes of notation along the way, let us guide the reader to this conclusion.
Recall that $\hbar=h^{1/2}$. To prove Theorem~\ref{corol.a>-1}, by Proposition~\ref{prop:BHR2.7-bis} it suffices to prove the eigenvalue asymptotics for $\lambda_n(\mathfrak N_{\hbar,\theta})$.
By Proposition~\ref{prop.coincide} it suffices to consider the operator $\mathfrak{N}^c_{\hbar,\theta}$ (defined just before the proposition), and by Proposition~\ref{prop.Nc-eff} to consider $\mathrm{Op}^w_\hbar (p^{\mathrm{eff}}_{\hbar,\theta})$, which by \eqref{eq.aefftob} and the localization estimates reduces to the statement of Proposition~\ref{prop:finala>}.
The proof of Theorem~\ref{corol.a=-1} follows the same lines, only applying Proposition~\ref{pro:final-1} in the last step instead of Proposition~\ref{prop:finala>}.

\bibliographystyle{abbrv}
\bibliography{biblio.bib}

\begin{thebibliography}{10}

\bibitem{AHK}
W.~Assaad, B.~Helffer, and A.~Kachmar.
\newblock Semi-classical eigenvalue estimates under magnetic steps.
\newblock {\em arXiv:2108.03964}, 2021.

\bibitem{AK20}
W.~Assaad and A.~Kachmar.
\newblock Lowest energy band function for magnetic steps.
\newblock {\em J. Spectral Theory, arXiv:2012.13794}, 2020.

\bibitem{BLTRS21}
J.-M. Barbaroux, L.~Le~Treust, N.~Raymond, and E.~Stockmeyer.
\newblock {On the {D}irac bag model in strong magnetic fields}.
\newblock {\em arXiv:2007.03242}, 2020.

\bibitem{BN}
V.~Bonnaillie.
\newblock On the fundamental state energy for a {S}chr\"{o}dinger operator with
  magnetic field in domains with corners.
\newblock {\em Asymptot. Anal.}, 41(3-4):215--258, 2005.

\bibitem{Bon12}
V.~Bonnaillie-No{\"e}l.
\newblock Harmonic oscillators with {Neumann} condition on the half-line.
\newblock {\em Commun. Pure Appl. Anal.}, 11(6):2221--2237, 2012.

\bibitem{BND06}
V.~Bonnaillie-No\"{e}l and M.~Dauge.
\newblock Asymptotics for the low-lying eigenstates of the {S}chr\"{o}dinger
  operator with magnetic field near corners.
\newblock {\em Ann. Henri Poincar\'{e}}, 7(5):899--931, 2006.

\bibitem{BHR22}
V.~Bonnaillie-No\"{e}l, F.~H\'{e}rau, and N.~Raymond.
\newblock Purely magnetic tunneling effect in two dimensions.
\newblock {\em Invent. Math.}, 227(2):745--793, 2022.

\bibitem{FLTRVN22}
R.~Fahs, L.~Le~Treust, N.~Raymond, and S.~V\~{u}~Ng\d{o}c.
\newblock {Edge states for the Robin magnetic Laplacian}.
\newblock Manuscript in preparation, 2022.

\bibitem{FH06}
S.~Fournais and B.~Helffer.
\newblock Accurate eigenvalue asymptotics for the magnetic {N}eumann
  {L}aplacian.
\newblock {\em Ann. Inst. Fourier (Grenoble)}, 56(1):1--67, 2006.

\bibitem{FH10}
S.~Fournais and B.~Helffer.
\newblock {\em Spectral methods in surface superconductivity}, volume~77 of
  {\em Progress in Nonlinear Differential Equations and their Applications}.
\newblock Birkh\"{a}user Boston, Inc., Boston, MA, 2010.

\bibitem{FHK22}
S.~Fournais, B.~Helffer, and A.~Kachmar.
\newblock Tunneling effect induced by a curved magnetic edge, 2022.
\newblock Manuscript in preparation.

\bibitem{FK}
S.~Fournais and A.~Kachmar.
\newblock On the energy of bound states for magnetic {Schr{\"o}dinger}
  operators.
\newblock {\em J. Lond. Math. Soc., II. Ser.}, 80(1):233--255, 2009.

\bibitem{F07}
R.~L. {Frank}.
\newblock {On the asymptotic number of edge states for magnetic Schr\"odinger
  operators}.
\newblock {\em {Proc. Lond. Math. Soc. (3)}}, 95(1):1--19, 2007.

\bibitem{GMS91}
C.~G\'{e}rard, A.~Martinez, and J.~Sj\"{o}strand.
\newblock A mathematical approach to the effective {H}amiltonian in perturbed
  periodic problems.
\newblock {\em Comm. Math. Phys.}, 142(2):217--244, 1991.

\bibitem{GV}
A.~Giunti and J.~J.~L. Vel{\'a}zquez.
\newblock {Edge States for generalised Iwatsuka models: Magnetic fields having
  a fast transition across a curve}.
\newblock {\em arXiv:2109.09651}, 2021.

\bibitem{HM01}
B.~Helffer and A.~Morame.
\newblock Magnetic bottles in connection with superconductivity.
\newblock {\em J. Funct. Anal.}, 185(2):604--680, 2001.

\bibitem{HeSj}
B.~Helffer and J.~Sj{\"o}strand.
\newblock Multiple wells in the semi-classical limit. {I}.
\newblock {\em Commun. Partial Differ. Equations}, 9:337--408, 1984.

\bibitem{HeSj5}
B.~Helffer and J.~Sj{\"o}strand.
\newblock Puits multiples en m{\'e}canique semi-classique. {V}: {\'E}tude des
  minipuits. ({Multiple} wells in semi-classical mechanics. {V}: {Study} of
  miniwells).
\newblock Current topics in partial differential equations, {Pap}. dedic. {S}.
  {Mizohata} {Occas}. 60th {Birthday}, 133-186 (1986)., 1986.

\bibitem{HS6}
B.~Helffer and J.~Sj\"{o}strand.
\newblock Puits multiples en m\'{e}canique semi-classique. {VI}. {C}as des
  puits sous-vari\'{e}t\'{e}s.
\newblock {\em Ann. Inst. H. Poincar\'{e} Phys. Th\'{e}or.}, 46(4):353--372,
  1987.

\bibitem{HPRS16}
P.~D. {Hislop}, N.~{Popoff}, N.~{Raymond}, and M.~P. {Sundqvist}.
\newblock {Band functions in the presence of magnetic steps}.
\newblock {\em {Math. Models Methods Appl. Sci.}}, 26(1):161--184, 2016.

\bibitem{KK}
A.~Kachmar and A.~Khochman.
\newblock Spectral asymptotics for magnetic {Schr{\"o}dinger} operators in
  domains with corners.
\newblock {\em J. Spectr. Theory}, 3(4):553--574, 2013.

\bibitem{Keraval}
P.~Keraval.
\newblock {\em Formules de Weyl par réduction de dimension. Applications à
  des Laplaciens électro-magnétiques}.
\newblock PhD thesis, Université de Rennes 1, 2018.

\bibitem{M07}
A.~Martinez.
\newblock A general effective {H}amiltonian method.
\newblock {\em Atti Accad. Naz. Lincei Rend. Lincei Mat. Appl.},
  18(3):269--277, 2007.

\bibitem{Ray}
N.~Raymond.
\newblock {\em Bound states of the magnetic {S}chr\"{o}dinger operator},
  volume~27 of {\em EMS Tracts in Mathematics}.
\newblock European Mathematical Society (EMS), Z\"{u}rich, 2017.

\bibitem{Simon}
B.~Simon.
\newblock The bound state of weakly coupled {S}chr\"{o}dinger operators in one
  and two dimensions.
\newblock {\em Ann. Physics}, 97(2):279--288, 1976.

\bibitem{Zworski}
M.~Zworski.
\newblock {\em Semiclassical analysis}, volume 138 of {\em Graduate Studies in
  Mathematics}.
\newblock American Mathematical Society, Providence, RI, 2012.

\end{thebibliography}

\end{document}